\numberwithin{equation}{section}
\newtheorem{theorem}{Theorem}[section]
\newtheorem{lemma}[theorem]{Lemma}
\newtheorem{corollary}[theorem]{Corollary}
\newtheorem{definition}[theorem]{Definition}
\newtheorem{proposition}[theorem]{Proposition}
\newtheorem{remark}[theorem]{Remark}
\begin{document}

\newcommand{\A}{\mathcal{A}}
\newcommand{\C}{\mathcal{C}}
\newcommand{\R}{\mathbb{R}}
\newcommand{\Q}{\mathbb{Q}}
\newcommand{\Z}{\mathbb{Z}}
\newcommand{\N}{\mathbb{N}}
\newcommand{\levy}{\mathscr{B}}
\newcommand{\sheet}{\mathbbm{B}}
\newcommand{\BB}{\boldsymbol{B}}
\newcommand{\alphar}{\texttt{$\boldsymbol{\alpha}$}}
\newcommand{\sigmar}{\texttt{\large $\boldsymbol{\sigma}$}}
\newcommand{\qA}{q_{\underline{\mathcal{A}}}}
\newcommand{\EE}{\mathbb{E}}
\newcommand{\PP}{\mathbb{P}}
\newcommand{\h}{\normalfont{\textit{\textbf{h}}}}

\title{\normalfont \Large \uppercase {A fractional Brownian field indexed by $L^2$ and a varying Hurst parameter}}

\author{\large Alexandre Richard\footnote{\hspace{0.1cm} INRIA-ECP Regularity Team  and Bar-Ilan University.\newline 
Ecole Centrale Paris, Laboratoire MAS, Grande Voie des Vignes, 92295 Châtenay-Malabry, France. \newline
E-mail: \href{mailto:alexandre.richard@ecp.fr}{alexandre.richard@ecp.fr}
}}

\date{}

\maketitle

\begin{abstract}
Using structures of Abstract Wiener Spaces, we define a fractional Brownian field indexed by a product space $(0,1/2] \times L^2(T,m)$, $(T,m)$ a separable measure space, where the first coordinate corresponds to the Hurst parameter of fractional Brownian motion. This field encompasses a large class of existing fractional Brownian processes, such as Lévy fractional Brownian motions and multiparameter fractional Brownian motions, and provides a setup for new ones. We prove that it has satisfactory incremental variance in both coordinates and derive certain continuity and Hölder regularity properties in relation with metric entropy. Also, a sharp estimate of the small ball probabilities is provided, generalizing a result on Lévy fractional Brownian motion. Then, we apply these general results to multiparameter and set-indexed processes, proving the existence of processes with prescribed local Hölder regularity on general indexing collections.
\end{abstract}

{\sl AMS classification\/}: 60\,G\,60, 60\,G\,17, 60\,G\,15, 60\,G\,22, 28\,C\,20.

{\sl Key words\/}: (multi)fractional Brownian motion, Gaussian fields, Gaussian measures, Abstract Wiener Spaces, multiparameter and set-indexed processes, sample paths properties.

\section{Introduction and motivations}

The study of fractional processes began in the 1930's with the work of Kolmogorov in turbulent fluid dynamics. In the late 1960's, Mandelbrot and Van Ness popularized the notion of fractional Brownian motion (fBm). The family of processes $\{B^h_t, t\in\R_+\}$ is defined for each Hurst parameter $h\in(0,1)$, in such a way that $B^h$ is the only $h$-selfsimilar Gaussian process with stationary increments, and has Brownian motion ($h=1/2$) as a standard representant. These processes were extensively studied, and in this paper, we consider generalizations of fBm in two directions: \emph{a)} the family of fBm is considered for the different Hurst parameters as a single Gaussian process indexed by $(h,t)\in (0,1)\times \R_+$; \emph{b)} the ``time'' indexing is replaced by any separable $L^2$ space. We prove that there exists a Gaussian process indexed by $(0,1/2]\times L^2(T,m)$, with the additional constraint that the variance of its increments is as well behaved as it is on $(0,1)\times \R_+$.

\vspace{0.2cm}

The study of the first generalization originated in the works \cite{mBm1,mBm2} on what is now known as multifractional Brownian motion (mBm). The mBm can be introduced in a tractable way following the approaches of \cite{Dobric,AyacheTaqqu}, where a fractional Brownian field (fBf) is primarily defined. By fractional Brownian field, we will always mean a Gaussian process indexed by $t$ and $h$ simultaneously, and such that for fixed $h$, the process in $t$ is a fBm. A mBm is then built from a fBf and any given path in the $h$ direction, $\{h(t),t\in\R_+\}$. In \cite{AyacheTaqqu}, the authors use a wavelet series expansion of fBm to construct a fBf, while in \cite{Dobric}, the harmonizable integral representation of fBm is used. In both cases, harmonic analysis arguments allow to prove that for any compact subset of $\R_+$, there is a constant $C>0$ such that for any $t$ in this compact, and any $h,h'\in(0,1)$,
\begin{equation}\label{eq:Hincrement}
\EE\left(B^h_t-B^{h'}_t\right)^2 \leq C\ (h-h')^2 .
\end{equation}
This inequality is of some importance since it ensures the sample paths regularity of the field in $h$, while the behaviour with respect to the increments in $t$ is already known.

More generally, we will consider processes over $L^2(T,m)$, and an important subclass formed by processes restricted to indicator functions of subsets of $T$. In particular, multiparameter when $(T,m) = (\R_+^d, {\rm Leb.})$, and more largely set-indexed processes \cite{ehem,Ivanoff}, naturally appear and thus motivate generalization \emph{b)}, besides the inherent interest of studying processes over an abstract space. Therefore, our goal will be to construct a fractional Brownian field such that inequality (\ref{eq:Hincrement}) holds when $t$ is not in $\R_+$ anymore, but in some $L^2$ space. We shall write $L^2$--fBf for any such fractional Brownian field, or simply fBf if the context is clear, and $h$-fBm when looking at the $L^2$--fBf with a fixed $h$. A $h$-fBm will have the following covariance: for each $h\in(0,1/2]$,
\begin{align}\label{eq:dotprod}
k_h : (f,g)\in L^2\times L^2 \mapsto \frac{1}{2} \left(m(f^2)^{2h} + m(g^2)^{2h} -m(|f-g|^2)^{2h}\right) \ .
\end{align}
Note that according to Remark 2.10 of \cite{ehem}, $k_h$ is positive definite. $m(\cdot)$ on $L^2$ denotes the canonical linear functional associated to $m$, $\int_T (\cdot)^2 \ {\rm d}m$.

This form of covariance is particularly interesting for several reasons: it was thoroughly studied when restricted to indicator functions of some indexing collections, in particular in \cite{ehem,HerbinXiao}, where it is the covariance of the set-indexed fractional Brownian motion (SIfBm) and of the multiparameter fractional Brownian motion (mpfBm, a particular SIfBm indexed by rectangles of $\R^d$ with Lebesgue measure). Also, this covariance belongs to a larger class of functions on a metric space $(S,d)$, of the form: \[C(s,t) = 1/2 \left(d(t_0,t) + d(t_0,s) -d(t,s)\right)\] for $s,t\in S$, and an arbitrarily chosen origin $t_0\in S$. Whenever $d$ is such that $C(s,t)$ is positive definite (see, for instance, \cite{Takenaka} for a discussion), we call the resulting Gaussian process a Lévy Brownian motion, after Paul Lévy, who introduced it in the Euclidean setting \cite{PLevy}. Accordingly, the covariances will be said to be of the Lévy type. In the definition of $C(s,t)$, replacing $d(\cdot,\cdot)$ by $d(\cdot,\cdot)^{2H}$ for some $H\in(0,1]$ yields a covariance $C_H$, and thus a process referred to as Lévy fractional Brownian motion\footnote{To prove $C_H$ is positive definite, one can refer to \cite{Bernstein} where it is stated that the composition of a Bernstein function with a negative definite function is again negative definite.}. From this point of view, ${m(|\cdot - \cdot|^2)^{1/2}}$ is the $L^2$ metric, and $k_h$ is of the same form as $C_H$ (with $H=2h$). Since $C_H$ is positive definite for $H\in(0,1]$, it is coherent that $h\in (0,1/2]$ only. In the multiparameter setting (\cite{herbin}), the most studied fractional Brownian processes include the Lévy fBm, with covariance associated to the Euclidean distance: ${2 R_H(t,s) = \|t\|^{2H}+ \|s\|^{2H} - \|t-s\|^{2H}}$; and the fractional Brownian sheet, with covariance $2^d R_H(t,s) = \prod_{i=1}^{d}\{|t_i|^{2H_i} + |s_i|^{2H_i} - |t_i-s_i|^{2H_i}\}$. Interestingly in this setting, these covariances are not only of the Lévy type, but also of the form (\ref{eq:dotprod}). For the Lévy fBm, there exists a measure $\mu$ and a class of subsets $\{U_t, t\in \R_+^d\}$ of $\R^d$ such that:
\begin{align*}
\int_{\R^d} |\mathbf{1}_{U_s}- \mathbf{1}_{U_t}|^2 \ {\rm d}\mu = \mu(U_s\bigtriangleup U_t) = \|s-t\| \ .
\end{align*}
This is Centsov's construction (see a review in \cite[pp.400--402]{Samorodnitsky}). It is also possible to express the fractional Brownian sheet as a set-indexed Brownian motion (\cite{Bierme}), although the constructed product measure depends on $H$. We will explore in section \ref{subsec:spde} this construction, with an application to the regularity of solutions of a class of stochastic partial differential equations. It should now be clear that the form of covariance (\ref{eq:dotprod}) encompasses a wide class of processes.

\vspace{0.2cm}

In \cite{DecreusefondUstunel}, Decreusefond and \"Ust\"unel introduced a family of fractional operators on the Wiener space $W$ (i.e. the space of continuous functions on $[0,1]$, started at $0$), characterizing for each $h\in(0,1)$ a Cameron-Martin space $H_h$. Using these fractional operators, we express the fractional Brownian field as a white noise integral over the Wiener space:
\begin{align*}
\quad  \left\{\int_W \langle \mathcal{K}_h R_h(\cdot,t), w \rangle \ {\rm d}\mathbb{B}_w , \ (h,t)\in (0,1)\times[0,1] \right\} \ ,
\end{align*}
where $\mathbb{B}$ is the white noise associated to the standard Gaussian measure of $W$, $\mathcal{K}_h$ is derived from fractional operators appearing in \cite{DecreusefondUstunel}, $R_h$ is the covariance of the fBm, and $\langle \cdot, \cdot \rangle$ denotes the usual pairing between $W$ and its topological dual $W^*$. The advantage of this approach is to allow the transfer of techniques of calculus on the Wiener space to any other linearly isometric space with the same structure. Those spaces, called Abstract Wiener Spaces (AWS), were introduced by Gross in his seminal work \cite{Gross}. Using the separability and reproducing kernel property of the Cameron-Martin spaces built from the kernels ${k_h, h\in(0,1/2]}$, we prove the existence of a fractional Brownian field $\BB= {\{\BB_{h,f}, \ h\in(0,1/2], f\in L^2(T,m)\}}$ over some probability space $(\Omega,\mathcal{F},\PP)$. This is the topic of the second section, where the aforementioned notions are defined.

The third section is devoted to proving that the above $L^2$-fBf $\BB$ has good $h$-increments, as in (\ref{eq:Hincrement}). These results rely on Hilbert space analysis and analytic function theory, and are to be found first in Theorem \ref{prop:varInc1dfBf} for a generalised version of the fBf (in the sense of generalised processes \cite{Gelfand}) and then in Theorem \ref{th:regH} for the $L^2$-fBf. Some of the computations are reported in Appendix \ref{App:borneKh}. As an application of the first Theorem, we look at the $L^2(\Omega)$-continuity of the mild solutions of a class of stochastic partial differential equations (SPDE) with additive and anisotropic fractional noise, when the regularity of the noise changes. We remark that an interest in the continuity with respect to the Hurst parameter of some functionals of the fBm already appeared in the works of Jolis and Viles (see \cite{JolisViles} and previous works).

Then, in the fourth section, we use the increment properties of the variance of the $L^2$--fBf to derive a sufficient condition for almost sure continuity. We express in Theorem \ref{th:continuity} this condition under the form of a Dudley entropy integral which does not depend on the $h$ coordinate. This is an interesting application of the result of the previous part, since it means that many regularity properties of the fBf can be obtained from the sole observation of the $h$-fBm, for any fixed $h$. Another link with metric entropy is established in Theorem \ref{theo:smallBall} under the form of a sharp estimate of the small balls of the $h$-fBm. This is a natural extension of a result due to Monrad and Rootzén \cite{Monrad} for the fBm, and Talagrand \cite{Talagrand95} for the Lévy fBm. While doing so, a local nondeterminism property of this process is proved, similar to the one originally established by Pitt \cite{Pitt} in the 1970's.

We take a closer look at the H\"older regularity of the fBf in the fifth section, when the $L^2$ indexing collection is restricted to the indicator functions of the rectangles of $\R^d$ (multiparameter processes) or to some indexing collection (in the sense of \cite{Ivanoff}). This restriction permits to use local Hölder regularity exponents, in the flavour of what was done in \cite{ehar}. When a regular path $\h:L^2\rightarrow (0,1/2]$ is specified, this defines a multifractional Brownian field as $\BB^{\h}_f = \BB_{\h(f),f}$, whose Hölder regularity at each point is proved to equal $\h(f)$ almost surely.

\section{Fractional processes in an abstract Wiener space} 

Let us start with a few general remarks. $L^2(T,m)$ with its classical dot product $(\cdot,\cdot)_m$ will always be assumed to be separable. This is the case, for example, when $T$ is a locally compact metric space with a countable basis, and $m$ is a Borel measure (cf Chapter IV of \cite{Simon}).

\noindent We recall that it is impossible to construct a ``standard'' countably additive Gaussian measure on an infinite-dimensional Hilbert space (see, for instance, \cite{Kuo2}). By ``standard'', we mean that every one-dimensional cylindrical projection of this measure is a standard Gaussian measure over $\R$. In particular, describing the law of a Brownian motion indexed over $L^2(T,m)$ in terms of Gaussian measure is not straightforward. However, given a Hilbert space $H$ and a cylindrical measure $\mu$ on $H$, it is possible to embed this Hilbert space in a larger Banach space $E$ such that $\mu$ is countably additive on $E$, as this will be exposed in the next paragraph. The most natural process obtained from this construction is a Brownian process indexed by $H$. In order to produce fractional variations of Brownian motion, we will make use of special Hilbert spaces on which covariance functions can be decomposed:
\begin{definition}[Reproducing Kernel Hilbert Space]\label{def:RKHS}
Let $(T,m)$ be a separable and complete metric space and $R$ a continuous covariance function on $T\times T$. $R$ determines a unique Hilbert space $H(R)$ satisfying the following properties: \emph{i)} $H(R)$ is a space of functions on $T\rightarrow \R$; \emph{ii)} for all $t\in T$, $R(\cdot,t) \in H(R)$; \emph{iii)} for all $t\in T$, $\forall f\in H(R)$, $\left(f,R(\cdot,t)\right)_{H(R)} = f(t)$ .
\end{definition}
$H(R)$ can be constructed from $\textrm{Span}\{R(\cdot,t), t\in T\}$, completing this space with respect to the norm given by the scalar product of the previous Definition. The continuity of the kernel and the separability of $T$ suffice to prove that $H(R)$ is itself separable \cite{Borell76}.

\vspace{0.3cm}

We now present the construction of Gross \cite{Gross} of an Abstract Wiener space on $H$ equipped with its scalar product $(\cdot,\cdot)_H$. Let $\tilde{\mu}$ be the following cylindrical measure: for any cylindrical subset $S\subset H$, i.e. of the form $S = P^{-1}(B)$, where $B$ is a Borel subset of $H$ and $P$ an orthogonal projection of $H$ with finite rank equal to $n$,
\begin{equation*}
\tilde{\mu}(S) = \frac{1}{(2\pi)^{n/2}} \int_B e^{-\|x\|_H^2/2} \ {\rm d}x \ .
\end{equation*}
The measure $\tilde{\mu}$ is centred Gaussian, but is not countably additive on $H$ when it is infinite-dimensional. The following definition allows to extend $\tilde{\mu}$ to a proper measure on a larger space. A \emph{measurable norm} is a norm $\|\cdot\|_1$ on $H$ such that for any $\varepsilon>0$, there exists an orthogonal projection $P_{\varepsilon}$ with finite rank such that for any finite-rank projection $P$ which is orthogonal to $P_{\varepsilon}$, the following holds: 
\begin{equation*}
\tilde{\mu}\big(\{x\in H:\|Px\|_1>\varepsilon\}\big) < \varepsilon \ .
\end{equation*}
If such a norm exists, we may call $E$ the completion of $H$ with respect to this norm. Then, $(E,\|\cdot\|_1)$ is a Banach space in which $(H,\|\cdot\|_H)$ is dense and such that the canonical injection is compact and continuous. The same relationship holds between their topological duals $E^*$ and $H^*$ (assimilated to $H$ in the following). The main result in \cite{Gross} then reads: $\tilde{\mu}$ extends to a countably additive measure $\mu$ on all the cylinders of $E$. 

From now on, the image of $x^*\in E^*$ by the canonical injection will be denoted $g_{x^*}\in H$. A major consequence of Gross's theorem is that there is a measure whose Fourier transform is given by:
\begin{equation}\label{eq:Fourier}
\forall x^*\in E^*, \quad \int_E e^{i \langle x^*,x\rangle} \ {\rm d}\mu(x) = e^{-\frac{1}{2}\|g_{x^*}\|_H^2} \ ,
\end{equation}
or, written in terms of the second moment:
\begin{equation*}
\forall x^*,y^*\in E^*, \quad \int_E \langle x^*,x\rangle\ \langle y^*,x\rangle \ {\rm d}\mu(x) = \left(g_{x^*},g_{y^*}\right)_H .
\end{equation*}

The triple $(H,E,\mu)$ is an \emph{abstract Wiener space} and $H$ is referred as \emph{Cameron-Martin space} of the process $\mu$. We present now two results concerning abstract Wiener spaces (AWS), the first stating that new abstract Wiener spaces can be easily constructed from others that already exist.
\begin{theorem}[\cite{Stroock}]\label{th:equivAWS}
Let $H$ and $H'$ be two separable Hilbert spaces and $F$ a linear isometry  from $H$ to $H'$. Assume that an AWS $(H,E,\mu)$ is given. Then, there exists a Banach space $E'\supset H'$ and a linear isometry $\tilde{F}:E\rightarrow E'$ whose restriction to $H$ is $F$ and $(H',E',\tilde{F}_*\mu)$ is an AWS ($\tilde{F}_*\mu$ denotes the push-forward measure of $\mu$ by $\tilde{F}$).
\end{theorem}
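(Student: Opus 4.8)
The plan is to transport the entire Abstract Wiener structure from $H$ to $H'$ along $F$, exploiting the fact that Gross's notion of measurable norm depends only on the Hilbert inner product and the cylindrical Gaussian measure $\tilde{\mu}$, both of which are preserved by an isometry. Since $F$ is a (surjective) linear isometry it is invertible, so I first define a candidate norm on $H'$ by $\|y\|_1' := \|F^{-1}y\|_1$ for $y\in H'$; by construction $F:(H,\|\cdot\|_1)\to(H',\|\cdot\|_1')$ is isometric for these auxiliary norms as well.

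The first key step is to check that $\|\cdot\|_1'$ is a measurable norm on $H'$. Given an orthogonal finite-rank projection $Q$ on $H'$, the operator $P:=F^{-1}QF$ is an orthogonal finite-rank projection on $H$, and $\|Qy\|_1' = \|P(F^{-1}y)\|_1$. Moreover $F$ carries the cylindrical Gaussian $\tilde{\mu}$ of $H$ to the cylindrical Gaussian $\tilde{\mu}'$ of $H'$, so the sets $\{y:\|Qy\|_1'>\varepsilon\}$ and $\{x:\|Px\|_1>\varepsilon\}$ have the same measure. Thus, given $\varepsilon>0$ and the projection $P_\varepsilon$ supplied by the measurability of $\|\cdot\|_1$, the projection $Q_\varepsilon:=FP_\varepsilon F^{-1}$ witnesses the measurability condition for $\|\cdot\|_1'$: any finite-rank $Q$ orthogonal to $Q_\varepsilon$ corresponds to a finite-rank $P=F^{-1}QF$ orthogonal to $P_\varepsilon$, and the defining inequality transfers verbatim.

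With measurability in hand, I let $E'$ be the completion of $H'$ for $\|\cdot\|_1'$; Gross's theorem then yields a countably additive Gaussian measure $\mu'$ on the cylinders of $E'$, so $(H',E',\mu')$ is an AWS. Since $F$ is an isometry between the dense subspaces $(H,\|\cdot\|_1)\subset E$ and $(H',\|\cdot\|_1')\subset E'$, it extends uniquely to a linear isometry $\tilde{F}:E\to E'$ whose restriction to $H$ is $F$.

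It remains---and this is the real content of the argument---to identify $\mu'$ with the push-forward $\tilde{F}_*\mu$. Both are Borel Gaussian measures on $E'$, so it suffices to match their characteristic functionals. For $y^*\in(E')^*$ one computes $\int_{E'} e^{i\langle y^*,y\rangle}\,{\rm d}(\tilde{F}_*\mu)(y) = \int_E e^{i\langle \tilde{F}^* y^*,x\rangle}\,{\rm d}\mu(x) = \exp\!\left(-\tfrac12\|g_{\tilde{F}^* y^*}\|_H^2\right)$ using (\ref{eq:Fourier}), where $\tilde{F}^*:(E')^*\to E^*$ is the adjoint and $g_{\tilde{F}^* y^*}\in H$ is the image of $\tilde{F}^* y^*$ under the canonical injection. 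The main obstacle is the bookkeeping showing $\|g_{\tilde{F}^* y^*}\|_H = \|g_{y^*}\|_{H'}$, i.e. that the canonical injections $E^*\hookrightarrow H$ and $(E')^*\hookrightarrow H'$ intertwine correctly with $F$. This follows by testing against $h\in H$: since $\langle \tilde{F}^* y^*,h\rangle = \langle y^*,Fh\rangle = (g_{y^*},Fh)_{H'} = (F^{-1}g_{y^*},h)_H$, one reads off $g_{\tilde{F}^* y^*}=F^{-1}g_{y^*}$, and the isometry of $F$ gives the desired equality of norms. Hence $\tilde{F}_*\mu$ and $\mu'$ share the Fourier transform $\exp\!\left(-\tfrac12\|g_{y^*}\|_{H'}^2\right)$ and therefore coincide, which completes the proof.
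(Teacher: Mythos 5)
Your proof is correct; note that the paper does not prove this statement but imports it from \cite{Stroock}, and your argument (transporting the measurable norm along $F$, invoking Gross's theorem for $(H',E',\mu')$, then identifying $\mu'$ with $\tilde{F}_*\mu$ by matching characteristic functionals via $g_{\tilde{F}^*y^*}=F^{-1}g_{y^*}$) is the standard one, and indeed the same Fourier-transform identification the paper itself uses when it checks $\tilde{J}_{h\,*}\mathcal{W}=\mathcal{W}_h$ in the proof of Theorem \ref{th:structOp}. The only caveat is that you read ``linear isometry from $H$ to $H'$'' as surjective, which is in fact forced by the conclusion (otherwise $H'$ could not be the Cameron--Martin space of $\tilde{F}_*\mu$) and is how the paper uses the result, so this is the correct interpretation rather than a gap.
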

\noindent In particular, starting from the AWS of continuous functions on $[0,1]$ with the sup-norm and the Wiener measure, it is possible to construct a large class of AWS. However, this does not mean that all AWS are the same, sicne for a single Hilbert space, there can be an uncountable family of AWS. However, starting from a Banach space and a measure, there is a unique Cameron-Martin space explicitely constructed from it.
\begin{lemma}[\cite{Stroock}]\label{lemma:Stroock}
\begin{enumerate}
\item For any $x^* \in E^*$, there is a unique $g_{x^*} \in H$ such that $(g,g_{x^*})_H = \langle x^*,g\rangle$ for all $g\in H$ and the mapping $i: x^*\in E^* \mapsto g_{x^*}\in H$ is linear, continuous, injective and its image is dense in $H$.
\item\label{item2} If $x\in E\setminus H$, then $\sup_{\{x^*\in E^*: \|h_{x^*}\|_H\leq 1\}} \langle x^*,x\rangle = \infty$ and for any $g\in H$, the norm is given by
 ${\|g\|_H = \sup\{\langle x^*,g\rangle, \ x^*\in E^* \textrm{ and } \|g_{x^*}\|_{H}\leq 1\}}$.
\item There exists a sequence $(x_n^*)\in (E^*)^{\N}$ such that $(g_{x^*_n})_{n\in \N}$ is an orthonormal basis of $H$.
\end{enumerate}
\end{lemma}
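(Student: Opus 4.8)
The plan is to treat the three assertions in order, since the second and third rely on the first; throughout I use only the structural facts established above, namely that the canonical injection $H\hookrightarrow E$ is continuous and injective with dense range, and that $E^*$ separates the points of $E$ (Hahn--Banach), writing $g_{x^*}$ for the image of $x^*$. For the first item, continuity of the embedding gives a constant $C$ with $|\langle x^*,g\rangle|\leq \|x^*\|_{E^*}\|g\|_E\leq C\|x^*\|_{E^*}\|g\|_H$ for every $g\in H$, so $g\mapsto\langle x^*,g\rangle$ is a bounded linear functional on $H$. The Riesz representation theorem then furnishes a unique $g_{x^*}\in H$ with $(g,g_{x^*})_H=\langle x^*,g\rangle$, and the same estimate yields $\|g_{x^*}\|_H\leq C\|x^*\|_{E^*}$, so $i$ is linear and continuous. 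Injectivity holds because $g_{x^*}=0$ forces $\langle x^*,g\rangle=0$ on the dense subspace $H\subset E$, hence $x^*=0$; and density of the range follows by an orthogonality argument: if $h_0\in H$ satisfies $(h_0,g_{x^*})_H=\langle x^*,h_0\rangle=0$ for every $x^*\in E^*$, then since $E^*$ separates points of $E$ and $h_0\in H\subset E$, we get $h_0=0$, so the closure of $i(E^*)$ is all of $H$.

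For the second item, the norm formula is the dual description of $\|\cdot\|_H$ on the dense set $i(E^*)$. For $g\in H$ and admissible $x^*$ (i.e. $\|g_{x^*}\|_H\leq 1$), Cauchy--Schwarz gives $\langle x^*,g\rangle=(g,g_{x^*})_H\leq\|g\|_H$, so the supremum is at most $\|g\|_H$; for the reverse inequality I would use the density from item one, choosing $x^*_n$ with $g_{x^*_n}\to g/\|g\|_H$, renormalising so that $\|g_{x^*_n}\|_H=1$, and noting $(g,g_{x^*_n})_H\to\|g\|_H$. To show the supremum is infinite for $x\in E\setminus H$, I would argue by contraposition: if it equalled some finite $M$, then (replacing $x^*$ by $-x^*$) $|\langle x^*,x\rangle|\leq M\|g_{x^*}\|_H$ for all $x^*$, so the map $g_{x^*}\mapsto\langle x^*,x\rangle$, well defined by injectivity of $i$, is bounded on the dense subspace $i(E^*)$ and extends to a bounded functional on $H$ represented by some $h\in H$; then $\langle x^*,x-h\rangle=0$ for all $x^*$, whence $x=h\in H$, a contradiction.

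For the third item I would use separability of $H$ together with density of $i(E^*)$: pick a countable subset of $i(E^*)$ dense in $H$, extract a maximal linearly independent subsequence (whose span is still dense), and apply Gram--Schmidt to obtain an orthonormal basis $(f_n)$ of $H$. Since $i$ is linear, each $f_n$ is a finite linear combination of elements $g_{y^*_k}$, hence equals $g_{x^*_n}$ for the corresponding combination $x^*_n\in E^*$, giving the claimed sequence.

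The only genuinely nontrivial point is the rigidity half of the second item: that finiteness of the supremum forces $x\in H$. Everything there hinges on combining the injectivity and density of $i$ from item one with the separation of points of $E$ by $E^*$; once these are in hand, the extension-and-representation step is routine. Items one and three are standard Hilbert-space functional analysis, so I expect the second item to be the main obstacle to write cleanly.
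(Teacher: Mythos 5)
The paper does not prove this lemma at all: it is quoted verbatim from the reference \cite{Stroock} as part of the background on abstract Wiener spaces, so there is no in-paper argument to compare yours against. Judged on its own, your proof is correct and complete. Item one is the standard Riesz-representation argument, using only that the injection $H\hookrightarrow E$ is continuous with dense range and that $E^*$ separates points of $E$; your injectivity and density arguments are exactly right. For item two, the norm formula follows from Cauchy--Schwarz plus the density of $i(E^*)$ as you say, and you correctly identify the only delicate point, namely that finiteness of the supremum for $x\in E$ forces $x\in H$: your contrapositive argument (homogeneity gives $|\langle x^*,x\rangle|\leq M\|g_{x^*}\|_H$, the functional $g_{x^*}\mapsto \langle x^*,x\rangle$ is well defined by injectivity of $i$, extends by density to a bounded functional on $H$ represented by some $h$, and then $\langle x^*,x-h\rangle=0$ for all $x^*$ forces $x=h$ by Hahn--Banach separation) is exactly the standard proof, and your remark that replacing $x^*$ by $-x^*$ converts the one-sided supremum into a bound on $|\langle x^*,x\rangle|$ closes the only loophole in the statement as written. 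Item three is routine: a countable subset of the dense set $i(E^*)$ that is dense in the separable space $H$, pruned to a linearly independent family with the same span and then orthonormalised, stays inside $i(E^*)$ because $i$ is linear. No gaps.
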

\vspace{0.3cm}

Our approach in this section will be to identify abstract Wiener spaces related to the covariance functions of the fractional Brownian motion: \[\forall s,t\in \R_+, \quad R_h(s,t) = \frac{1}{2} \left(|s|^{2h}+ |t|^{2h} - |s-t|^{2h}\right) \ ,\] $h\in(0,1]$, and operators providing links between these different AWS. Then, each fractional Brownian motion is expressed as an integral in the standard Wiener space, eventually providing a Gaussian field in $t$ and $h$. The second step is to extend this object to another family of abstract Wiener spaces based on the reproducing kernel Hilbert space (RKHS) of $k_h$, relying heavily on Theorem \ref{th:equivAWS}. Finally, we prove the resulting field has a ``good'' covariance structure, in the sense of (\ref{eq:Hincrement}).

\subsection{\texorpdfstring{$\boldsymbol{R_h}$}{R\_h} in the standard Wiener space}

The standard Wiener space on $[0,1]$ is the triple consisting of the Banach space of continuous functions started at $0$, denoted $W$; the Cameron-Martin space $H^1$ of absolutely continuous functions started at $0$; and the Gaussian measure $\mathcal{W}$ on $W$, characterized by equation (\ref{eq:Fourier}) with appropriate change ($E=W$, $H=H^1$ and $\mu = \mathcal{W}$).

In fact, $H^1$ is also the space of real-valued functions $g$ on $[0,1]$ of the form ${g(t) = \int_0^t \dot g(s)\ {\rm d}s}$, where $\dot g \in L^2([0,1])$, and the Hilbert norm is $\|g\|_{H^1} = \|\dot g\|_{L^2}$.
Using the Riesz representation theorem on $C([0,1])$, $g_{x^*}$, as well as any $x^* \in C([0,1])^*$, can be expressed by: $\forall w\in C([0,1])$, ${x^*(w) = \int_0^1 w(t) \Lambda_{x^*}({\rm d}t)}$, with $\Lambda_{x^*}$ a finite signed Radon measure on $[0,1]$. Besides, this equality yields for the total variation of $\Lambda_{x^*}$: ${|\Lambda_{x^*}|([0,1]) = \|x^*\|}$. Thus, we shall assimilate $x^*$ with $\Lambda_{x^*}$, writing:
\begin{equation}\label{eq:dualite}
\forall x^*\in W^*, \forall w\in W, \quad \langle x^*,w \rangle = \int_{[0,1]} w(t) \ x^*({\rm d}t) .
\end{equation}

We now characterize the existence of a family of fractional Wiener spaces, as described in \cite{DecreusefondUstunel}: for any $h\in (0,1)$, there is a one-to-one operator $K_h$ acting on $L^2([0,1])$ satisfying the following properties:
\begin{enumerate}[1)]
\item \label{DU1} The space $H_h = K_h\left(L^2([0,1])\right)$ is a subspace of $W$. If we denote $(\cdot,\cdot)_{H_h}$ the scalar product ($\|\cdot\|_{H_h}$ the norm) that makes $H_h$ isometric to $L^2$, and $\mathcal{W}_h$ the Gaussian measure whose Fourier transform is characterized by $(\cdot,\cdot)_{H_h}$, then $(H_{h},W,\mathcal{W}_{h})$ is an AWS. For $h=1/2$: $(H_{1/2},W,\mathcal{W}_{1/2}) = (H^1,W,\mathcal{W})$.
\item\label{DU2} Let $K_h^*$ be the adjoint operator of $K_h$. When $K_h^*$ is restricted to $W^*$, the operator $K_h \circ K_h^*$ is the canonical injection from $W^*$ to $H_h$ and it is a kernel operator whose kernel is precisely $R_h$. As a consequence, we use the same notation for both the operator and the kernel.
\item\label{DU4} $K_h$ is a Hilbert Schmidt operator and has a kernel on $[0,1]^2$ which is denoted $K_h$ too, so that for any $g\in L^2([0,1])$, $K_h g(t) = \int_0^1 K_h(t,s) g(s) \ {\rm d}s$. We notice that while the kernel $R_h$ is symmetric, $K_h$ is not, as this will become clear.
\end{enumerate}
This is summarized, for all $h$, by:
\begin{equation*}
W^* \overset{K_h^*}{\longrightarrow} L^2([0,1]) \overset{K_h}{\longrightarrow} H_h \overset{R_h^*}{\longrightarrow} W .
\end{equation*}

More details on $K_h$ are given along this section and in Appendix \ref{App:borneKh}, especially its integral formula (\cite{DecreusefondUstunel,Nualart}), while the link with fractional integrals is clearly established in \cite{DecreusefondUstunel}. For $t\in [0,1]$, we denote by $\delta_t$ the Dirac measure at point $t$, considered here as an element of $W^*$.

\begin{theorem}\label{th:structOp}
Let $h\in (0,1)$ and $H(R_h)$ be the RKHS of $R_h$. Then, $H(R_h)\subseteq H_h$ and for any ${f,g\in H(R_h)}$, ${(f,g)_{H(R_h)} = (f,g)_{H_h}}$. Besides, there is a linear isometry $\tilde{J}_h^*$ from $W^*$ to itself such that for any $\eta, \nu \in W^*$, 
\begin{align}\label{eq:1}
\int_W \langle \tilde{J}_h^* \eta, w \rangle \ \langle \tilde{J}_h^* \nu, w \rangle \ {\rm d}\mathcal{W}(w) = (R_h \eta, R_h \nu)_{H_h} \ .
\end{align}
Furthermore, for all $s,t\in [0,1]$, 
\begin{align}\label{eq:L2}
R_h(s,t) &= (K_h^*\delta_s, K_h^*\delta_t)_{L^2} \nonumber \\
&= \left(R_h\delta_s, R_h \delta_t \right)_{H_h} \nonumber \\
&= \int_W \langle \mathcal{K}_h R_h(\cdot,s),w \rangle \ \langle \mathcal{K}_h R_h(\cdot,t),w \rangle \ {\rm d}\mathcal{W}(w) ,
\end{align}
with $\mathcal{K}_h: H_h \rightarrow W^*$ defined by the relationship $\mathcal{K}_h = \tilde{J}_h^* \circ R_h^{-1}$.

\end{theorem}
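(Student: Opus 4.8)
The plan is to establish the three assertions in turn, the genuine work being the construction of $\tilde J_h^*$ at the level of $W^*$ rather than merely between the Cameron--Martin spaces.

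For the first assertion and the first two equalities in \eqref{eq:L2}, I would argue purely with reproducing kernels. By property \ref{DU2}, $R_h=K_h\circ K_h^*$ is the canonical injection $W^*\to H_h$ and carries the kernel $R_h$, so that $R_h\delta_t=R_h(\cdot,t)\in H_h$ for every $t\in[0,1]$. The first point of Lemma \ref{lemma:Stroock}, applied to the abstract Wiener space $(H_h,W,\mathcal{W}_h)$, gives $(g,R_h\delta_t)_{H_h}=\langle \delta_t,g\rangle=g(t)$ for all $g\in H_h$; taking $g=R_h\delta_s$ yields $(R_h\delta_s,R_h\delta_t)_{H_h}=R_h(s,t)$, while the isometry $K_h$ (property \ref{DU1}) gives $(R_h\delta_s,R_h\delta_t)_{H_h}=(K_h^*\delta_s,K_h^*\delta_t)_{L^2}$. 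This is exactly the first two lines of \eqref{eq:L2}. The same computation shows that on $\mathrm{Span}\{R_h(\cdot,t)\}$ the inner products $(\cdot,\cdot)_{H(R_h)}$ and $(\cdot,\cdot)_{H_h}$ coincide; since this span is dense in $H(R_h)$ and since point evaluations are continuous on $H_h$ (because $|g(t)|\le \|g\|_{H_h}R_h(t,t)^{1/2}$), any sequence that is Cauchy for the common norm converges in both spaces to the same function, which embeds $H(R_h)$ isometrically into $H_h$.

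For the second assertion I would first reduce \eqref{eq:1} to an identity on $H^1$. Polarising the Fourier-transform identity \eqref{eq:Fourier} for the standard Wiener space gives, for $a,b\in W^*$, the second-moment formula $\int_W\langle a,w\rangle\langle b,w\rangle\ {\rm d}\mathcal{W}(w)=(R_{1/2}a,R_{1/2}b)_{H^1}=(K_{1/2}^*a,K_{1/2}^*b)_{L^2}$. Hence \eqref{eq:1} is equivalent to asking for a linear $\tilde J_h^*:W^*\to W^*$ with $(K_{1/2}^*\tilde J_h^*\eta,K_{1/2}^*\tilde J_h^*\nu)_{L^2}=(K_h^*\eta,K_h^*\nu)_{L^2}$ for all $\eta,\nu$. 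At the Hilbert level this is immediate: $K_{1/2}$ and $K_h$ are isometries onto $H^1$ and $H_h$, so $V_h:=K_{1/2}K_h^{-1}:H_h\to H^1$ is a surjective isometry and $\tilde J_h^*=R_{1/2}^{-1}V_hR_h=(K_{1/2}^*)^{-1}K_h^*$ formally satisfies the requirement. Theorem \ref{th:equivAWS}, applied to $V_h$ and the abstract Wiener space $(H_h,W,\mathcal{W}_h)$, is what guarantees that this transfer is consistent with the Gaussian structure, so that once $\tilde J_h^*$ is known to map $W^*$ into itself it is automatically isometric for the relevant inner products and \eqref{eq:1} holds.

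The main obstacle is precisely this well-definedness: $R_{1/2}^{-1}$ is only defined on the range $R_{1/2}(W^*)$, so I must show that $V_hR_h\eta$, equivalently $K_h^*\eta$ up to the isometry $K_{1/2}$, lands in $K_{1/2}^*(W^*)$ for every measure $\eta\in W^*$; that is, that the fractional operator $K_h^*$ maps measures into the range of $K_{1/2}^*$. This is a genuinely analytic statement about the Decreusefond--\"Ust\"unel kernels and is where the explicit integral formula for $K_h$ enters; I would carry out these kernel estimates in Appendix \ref{App:borneKh}, if necessary composing with an auxiliary $L^2$-unitary to absorb the fractional singularity (the point being most delicate for $h<1/2$, where $K_h(t,\cdot)$ is unbounded near the diagonal). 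Granting this, $\mathcal K_h=\tilde J_h^*\circ R_h^{-1}$ is well defined on $R_h(W^*)\ni R_h(\cdot,s)$, and the last line of \eqref{eq:L2} follows by specialising \eqref{eq:1} to $\eta=\delta_s$, $\nu=\delta_t$, since $\mathcal K_hR_h(\cdot,s)=\tilde J_h^*\delta_s$ and $(R_h\delta_s,R_h\delta_t)_{H_h}=R_h(s,t)$ from the first step.
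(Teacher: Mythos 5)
Your first part --- the embedding $H(R_h)\subseteq H_h$, the identity of the two inner products on $H(R_h)$, and the first two lines of (\ref{eq:L2}) --- is sound and essentially the paper's argument, with Lemma \ref{lemma:Stroock} standing in for the explicit computation $K_h^*\delta_t=K_h(t,\cdot)$ and $(K_h^*\delta_s,K_h^*\delta_t)_{L^2}=R_h(s,t)$. The gap is in the construction of $\tilde J_h^*$. You propose to build it on the dual side, as $(K_{1/2}^*)^{-1}\circ K_h^*$, and you correctly identify that this requires the range inclusion $K_h^*(W^*)\subseteq K_{1/2}^*(W^*)$ --- but you then defer that inclusion to unspecified ``kernel estimates''. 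This cannot be postponed: it is the whole content of the existence assertion, and in the form you need it it fails. Indeed $K_{1/2}(t,s)=c_{1/2}\mathbf{1}_{[0,t)}(s)$, so for any finite signed measure $\nu\in W^*$ the function $K_{1/2}^*\nu(s)=c_{1/2}\,\nu((s,1])$ is bounded by $c_{1/2}|\nu|([0,1])$; whereas for $h<1/2$ the function $K_h^*\delta_t=K_h(t,\cdot)$ blows up like $(t-s)^{-(1/2-h)}$ as $s\uparrow t$ (see (\ref{eq:K_h})) and so is not almost everywhere equal to any bounded function. Hence $K_h^*\delta_t\notin K_{1/2}^*(W^*)$, and your operator is undefined precisely on the Dirac masses needed for (\ref{eq:L2}). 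Composing with an ``auxiliary $L^2$-unitary'' does not repair this, since it changes the operator whose existence is being asserted; and Appendix \ref{App:borneKh} contains bounds on the $h$-increments of $K_h^*\xi$, not a range identification of this kind.

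The paper works on the primal side precisely to avoid this obstruction. It sets $J_h=K_h\circ K_{1/2}^{-1}$, an isometric isomorphism from $H_{1/2}$ onto $H_h$; by Theorem \ref{th:equivAWS} this extends to a linear isometry $\tilde J_h$ of $W$ onto the completion of $H_h$ for the transported norm $\|g\|_1=\|J_h^{-1}g\|_W$, which is identified with $W$, and the pushforward $\tilde J_{h\,*}\mathcal{W}$ has the Fourier transform of $\mathcal{W}_h$, hence equals it. Then $\tilde J_h^*$ is simply the Banach-space adjoint of $\tilde J_h$, which maps $W^*$ into $W^*$ automatically --- no range condition ever has to be checked --- and (\ref{eq:1}) follows from the change of variables
\begin{equation*}
\int_W \langle \tilde J_h^*\eta,w\rangle\,\langle \tilde J_h^*\nu,w\rangle\ {\rm d}\mathcal{W}(w)=\int_W \langle \eta,w\rangle\,\langle \nu,w\rangle\ {\rm d}\mathcal{W}_h(w)=(R_h\eta,R_h\nu)_{H_h}\ .
\end{equation*}
The relation $K_{1/2}^*\tilde J_h^*=K_h^*$ that you wanted to take as a definition is instead derived afterwards, as an identity of $L^2$-valued operators, from the already-constructed adjoint (Corollary \ref{Cor:RepLaw}); the existence of $\tilde J_h^*$ is obtained without ever inverting $K_{1/2}^*$.
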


Before proving this result, consider the following immediate application. For a white noise $\mathbb{B}$ on $W$ with control measure $\mathcal{W}$ on the probability space $(\Omega, \mathcal{F}, \PP)$, the following formula defines a Gaussian random field on $(0,1)\times [0,1]$:
\begin{equation}\label{eq:GfBm1D}
\forall (h,t) \in (0,1)\times [0,1], \quad \BB_{h,t} = \int_W \langle \mathcal{K}_h R_h(\cdot,t), w \rangle \ {\rm d}\mathbb{B}_w \ .
\end{equation}
Indeed, according to equation (\ref{eq:L2}), the mapping $w\mapsto \langle \mathcal{K}_h R_h(\cdot,t),w \rangle$ belongs to $L^2(W,\mathcal{W})$.
In addition, the previous theorem shows that for fixed $h$, this process is a fractional Brownian motion. 

\begin{remark}
A similar two-parameter Gaussian field appeared for the first time in \cite{AyacheTaqqu,Dobric}, although it was not expressed as an integral over the Wiener space. We present in Corollary \ref{Cor:RepLaw} a different form for (\ref{eq:GfBm1D}).
\end{remark}

\begin{proof}
By the definition of point \ref{DU2}), $R_h$ is the operator $K_h\circ K_h^*$ mapping $W^*$ to $H_h$, with the kernel $R_h$ of the fractional Brownian motion. The assertion $R_h \delta_t = R_h(t,\cdot)$, for any $t\in [0,1]$, follows. As a consequence, $R_h(\cdot,t) \in H_h$ for all $t$, which in turns suffices to prove that $H(R_h)\subseteq H_h$, since $H(R_h)$ is the completion of $\textrm{Span}\{R_h(\cdot,t), t\in [0,1]\}$. For the same reason, proving that $(\cdot,\cdot)_{H(R_h)} = (\cdot,\cdot)_{H_h}$ on $H(R_h)$ amounts to show that for all $s,t\in [0,1]$, $(R_h(s,\cdot),R_h(t,\cdot))_{H(R_h)} = (R_h(s,\cdot),R_h(t,\cdot))_{H_h}$, where the first term in the equality is, by definition, equal to $R_h(s,t)$.
Then, we infer from point \ref{DU2}) (of the definition of the adjoint operator) that $K_h^*\eta$ can be evaluated in the following manner:
\begin{equation*}
\forall f\in L^2, \quad (f,K_h^*\eta)_{L^2} = \langle \eta, K_h f \rangle = \int_0^1 K_h f(t) \ \eta({\rm d}t) \ .
\end{equation*} 
Applied to $\delta_t$ and taking into account point \ref{DU4}), this yields:
\begin{align*}
(f,K_h^*\delta_t)_{L^2} = K_h f(t) &= \int_0^1 K_h(t,s) f(s) \ {\rm d}s
\end{align*}
\noindent Thus $K_h^*\delta_t = K_h(t,\cdot)$ in $L^2$ and it follows that $K_h\circ K_h^* \delta_t = \int_0^1 K_h(\cdot,r) \ K_h(t,r) \ {\rm d}r$
which is also equal to $R_h(t,\cdot)$, as mentioned at the beginning of the proof. According to point \ref{DU1}), $(\cdot,\cdot)_{H_h}$ satisfies, for $f,g\in H_h$, ${(f,g)_{H_h} = (K_h^{-1}f, K_h^{-1} g)_{L^2}}$. As a consequence, 
\begin{align*}
(R_h(\cdot,t),R_h(\cdot,s))_{H_h} &= (K_h^*\delta_t, K_h^*\delta_s)_{L^2} \\
& = \int_0^1 K_h(s,r) \ K_h(t,r) \ {\rm d}r\\
&= R_h(s,t)\ ,
\end{align*}
and the first result follows.

To prove the second point, let us define $J_h= K_h \circ K_{1/2}^{-1}$. From its definition, $J_h$ is an isometric isomorphism from $H_{1/2}$ towards $H_h$, thus admitting a unique (linear) isometric extension to $W$. Let $\tilde{J}_h$ be this extension and notice that the image space of $\tilde{J}_h$ is $\overline{H_h}^{\|\cdot\|_1}$, where $\|\cdot\|_1$ is the norm defined by $\|g\|_1 = \|J_h^{-1}(g)\|_W, g\in H_h$. It is clear that $\overline{H_h}^{\|\cdot\|_1} = \overline{H_{1/2}}^{\|\cdot\|_W} = W$, and as a consequence, $(H_h,W,\tilde{J}_{h \ *}\mathcal{W})$ is the image of the standard Wiener space by the isometry $J_h$. In particular, this identifies $\tilde{J}_{h \ *}\mathcal{W} = \mathcal{W}_h$ (these measures have the same Fourier transform). Let $\tilde{J}_h^*:W^*\rightarrow W^*$ be the adjoint operator of $\tilde{J}_h$. Then:
\begin{align*}
\int_W \langle \tilde{J}_h^* \eta, w \rangle \ \langle \tilde{J}_h^* \nu, w \rangle \ {\rm d}\mathcal{W}(w) &= \int_W \langle \eta, \tilde{J}_h w \rangle \ \langle \nu, \tilde{J}_h w \rangle \ {\rm d}\mathcal{W}(w) \\
&= \int_W \langle \eta, w \rangle \ \langle \nu,  w \rangle \ {\rm d}\left(\tilde{J}_{h\ *}\mathcal{W}\right)(w) \\
&= \int_W \langle \eta, w \rangle \ \langle \nu,  w \rangle \ {\rm d}\mathcal{W}_h(w) \\
&= \left(R_h \eta, R_h \nu\right)_{H_h} .
\end{align*}

Finally, (\ref{eq:L2}) directly follows from (\ref{eq:1}), using the fact that $R_h(\cdot,t) = R_h\delta_t$.
\end{proof}

To end this section, we show that the process defined by (\ref{eq:GfBm1D}) is equal to the process that appeared in \cite{DecreusefondUstunel}. This is no surprise, since the same operators are involved. However, we include the proof for completeness.

\begin{corollary}\label{Cor:RepLaw}
Let $\BB$ a process as defined in equation (\ref{eq:GfBm1D}). Then $\BB$ has the same law as the process $B_1 = \left\{ \int_0^1 K_h(t,s) \ {\rm d}W_s, \ (h,t)\in (0,1)\times[0,1] \right\}$.
\end{corollary}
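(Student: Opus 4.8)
The plan is to use that both $\BB$ and $B_1$ are centred Gaussian fields indexed by $(h,t)\in(0,1)\times[0,1]$ — the first as a family of white-noise integrals over $(W,\mathcal{W})$, the second as a family of Wiener integrals over $([0,1],\mathrm{Leb.})$ — so that it suffices to check that their covariance functions coincide, \emph{including} the cross terms for $h\neq h'$, since these determine the joint law entirely. For $B_1$ the Wiener isometry gives at once $\EE\,[B_1(h,t)\,B_1(h',s)] = (K_h(t,\cdot),K_{h'}(s,\cdot))_{L^2} = (K_h^*\delta_t,K_{h'}^*\delta_s)_{L^2}$, the last equality being the identity $K_h^*\delta_t = K_h(t,\cdot)$ obtained in the proof of Theorem \ref{th:structOp}. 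The task is thus reduced to showing that the white-noise side produces the same quantity.

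For the white-noise side, the isometry of the integral against $\mathbb{B}$ yields $\EE\,[\BB_{h,t}\BB_{h',s}] = \int_W \langle \mathcal{K}_h R_h(\cdot,t),w\rangle\,\langle \mathcal{K}_{h'}R_{h'}(\cdot,s),w\rangle\,\mathrm{d}\mathcal{W}(w)$. The first simplification is to use $\mathcal{K}_h = \tilde{J}_h^*\circ R_h^{-1}$ together with $R_h(\cdot,t)=R_h\delta_t$ to collapse $\mathcal{K}_h R_h(\cdot,t) = \tilde{J}_h^*\delta_t \in W^*$. Then, since $R_{1/2}=K_{1/2}K_{1/2}^*$ is the canonical injection of the \emph{standard} Wiener space, the second-moment formula following (\ref{eq:Fourier}) turns the integral into $(R_{1/2}\tilde{J}_h^*\delta_t,\,R_{1/2}\tilde{J}_{h'}^*\delta_s)_{H_{1/2}}$.

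The key step is to identify $R_{1/2}\tilde{J}_h^*$. I would derive, from the identity $\tilde{J}_{h\,*}\mathcal{W}=\mathcal{W}_h$ established in the proof of Theorem \ref{th:structOp} (equivalently, from the transformation rule of the canonical injection under the isometric extension $\tilde{J}_h$ of $J_h = K_h K_{1/2}^{-1}$, which reads $R_h = J_h\circ R_{1/2}\circ\tilde{J}_h^*$), the relation $R_{1/2}\tilde{J}_h^* = J_h^{-1}R_h$. Plugging $J_h^{-1}=K_{1/2}K_h^{-1}$ and $R_h\delta_t = K_h K_h^*\delta_t$ gives $R_{1/2}\tilde{J}_h^*\delta_t = K_{1/2}K_h^*\delta_t$. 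Finally, using $(f,g)_{H_{1/2}}=(K_{1/2}^{-1}f,K_{1/2}^{-1}g)_{L^2}$ cancels the operators $K_{1/2}$ and produces exactly $(K_h^*\delta_t,K_{h'}^*\delta_s)_{L^2}$, which matches the covariance of $B_1$ and concludes the argument.

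I expect the operator bookkeeping of the third paragraph to be the main obstacle: justifying $R_h=J_h R_{1/2}\tilde{J}_h^*$ and keeping straight the distinction between the Hilbert adjoint $J_h^{-1}$ on $H_{1/2}$ and the Banach adjoint $\tilde{J}_h^*$ on $W^*$ require some care, especially because the cross terms $h\neq h'$ cannot be read off directly from (\ref{eq:L2}). Once that identity is secured, the remaining reductions are purely formal cancellations.
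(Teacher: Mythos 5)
Your proof is correct, and it follows the paper's own strategy for the first two thirds of the argument: reduce to covariances (both fields being centred Gaussian), compute the covariance of $B_1$ by the Wiener isometry, and collapse $\mathcal{K}_h R_h(\cdot,t)=\tilde{J}_h^*R_h^{-1}R_h\delta_t=\tilde{J}_h^*\delta_t$ on the white-noise side. Where you diverge is in the final identification. The paper treats $\tilde{J}_h^*\delta_t$ concretely as a finite signed measure, applies Fubini together with $\int_W w(u)w(v)\,\mathcal{W}({\rm d}w)=R_{1/2}(u,v)$ to land on $\left(K_{1/2}^*\tilde{J}_h^*\delta_t,K_{1/2}^*\tilde{J}_{h'}^*\delta_s\right)_{L^2}$, and then proves the operator identity $K_{1/2}^*\tilde{J}_h^*=K_h^*$ by a four-line duality computation ($\langle\tilde{J}_h^*x^*,K_{1/2}f\rangle=\langle x^*,J_hK_{1/2}f\rangle=\langle x^*,K_hf\rangle$, valid since $K_{1/2}f\in H_{1/2}$). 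You instead stay at the abstract level: the second-moment formula gives the Cameron--Martin pairing $\left(R_{1/2}\tilde{J}_h^*\delta_t,R_{1/2}\tilde{J}_{h'}^*\delta_s\right)_{H_{1/2}}$, and you invoke the transformation rule $R_h=J_h\circ R_{1/2}\circ\tilde{J}_h^*$ for the canonical injection of a push-forward AWS to get $R_{1/2}\tilde{J}_h^*=J_h^{-1}R_h=K_{1/2}K_h^*$. The two key identities are equivalent (compose yours with $K_{1/2}^{-1}$), so the content is the same; the difference is in what must be justified. Your route needs the transformation rule, which is not stated in the paper but follows in two lines from $(Fg,R'y^*)_{H'}=\langle y^*,\tilde{F}g\rangle=\langle\tilde{F}^*y^*,g\rangle=(Fg,FR\tilde{F}^*y^*)_{H'}$ together with the identification $\tilde{J}_{h\,*}\mathcal{W}=\mathcal{W}_h$ already established in the proof of Theorem \ref{th:structOp}; you should spell that out rather than leave it as the flagged ``main obstacle.'' What your version buys is independence from the representation of $W^*$ elements as Radon measures (no Fubini on measures), so it would transfer verbatim to an abstract Wiener space whose dual is not a space of measures; what the paper's version buys is self-containedness, since every step is an explicit computation with objects already defined.
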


\begin{proof}
Since both processes are centred Gaussian it suffices to study their covariance. 
For any $(h,t), (h',s) \in (0,1)\times[0,1]$, 
\begin{equation*}
C_1\left((h,t), (h',s)\right) = \int_0^1 K_h(t,u) \ K_{h'}(s,u) \ {\rm d}u = \left(K_h^* \delta_t, K_{h'}^* \delta_s\right)_{L^2} .
\end{equation*}
The covariance of $\BB$ is also derived, leading to:
\begin{equation*}
C_2\left((h,t), (h',s)\right) = \int_W \langle \tilde{J}_h^* \delta_t, w \rangle \langle \tilde{J}_{h'}^* \delta_s, w \rangle \ \mathcal{W}({\rm d}w) \ ,
\end{equation*}
and then, considering $\tilde{J}_h^*\delta_t \in W^*$ as a measure:
\begin{align*}
C_2\left((h,t), (h',s)\right) &= \int_W \left(\int_0^1 \int_0^1 w(u) w(v) \ (\tilde{J}_h^*\delta_t)({\rm d}u)\ (\tilde{J}_{h'}^*\delta_s)({\rm d}v) \right) \mathcal{W}({\rm d}w) \\
&= \int_0^1 \int_0^1 \left(\int_W w(u) w(v) \ \mathcal{W}({\rm d}w) \right) (\tilde{J}_h^*\delta_t)({\rm d}u)\ (\tilde{J}_{h'}^*\delta_s)({\rm d}v) \\
&= \int_0^1 \int_0^1 R_{1/2}(u,v)  \ (\tilde{J}_h^*\delta_t)({\rm d}u)\ (\tilde{J}_{h'}^*\delta_s)({\rm d}v) \\
&= \left(K_{1/2}^*\tilde{J}_h^*\delta_t, K_{1/2}^*\tilde{J}_{h'}^*\delta_s\right)_{L^2} .
\end{align*}
Hence, we are to prove that for any $h\in(0,1)$, $K_{1/2}^* \tilde{J}_h^*$ and $K_h^*$ are equal, as operators from $W^*$ to $L^2[0,1]$. The main ingredient is that when restricted to $H_{1/2}$, $\tilde{J}_h$ is equal to $K_h\circ K_{1/2}^{-1}$. Then, for any $x^*\in W^*, f\in L^2[0,1]$,
\begin{align*}
\left(K_{1/2}^* \tilde{J}_h^* x^*, f\right)_{L^2} &= \langle \tilde{J}_h^* x^*, K_{1/2} f\rangle \\
&= \langle x^* , \tilde{J}_h K_{1/2} f \rangle \\
&= \langle x^* , J_h K_{1/2} f \rangle  \\
&= \langle x^*, K_h f \rangle ,
\end{align*}
where the third equality holds because $K_{1/2} f\in H_{1/2}$.
\end{proof}

\begin{remark}
We make a final remark in this section, with the definition of an extension of $\{\BB_{h,t}, (h,t)\in (0,1)\times [0,1]\}$ (or $B_1$) to a generalised process (in the sense of \cite{Gelfand}): $\{\BB_{h,\xi}, (h,\xi) \in (0,1)\times W^*\}$, where following (\ref{eq:1}), 
\begin{equation}\label{eq:1dgfBf}
\BB_{h,\xi} = \int_W \langle \tilde{J}_h^* \xi, w \rangle \ {\rm d}\mathbb{B}_w \ .
\end{equation}
\end{remark}

\subsection{Extended decomposition in an abstract Wiener space}\label{subsec:extended}

Equipped with a tractable way of expressing a fractional Gaussian field given the reproducing kernel $R_h$ on $[0,1]$, we now present how to extend this decomposition to any separable $L^2(T,m)$ space with the family of kernels $k_h, h\in (0,1/2]$ introduced in (\ref{eq:dotprod}). The RKHS of $k_h$ is written $H(k_h)$.

For any $h\in (0,1/2]$, $H_h \supset H(R_h)$ and $H(k_h)$ are separable Hilbert spaces, so let us choose a linear isometry between the two latter spaces and extend it to $H_h$. We call $u_h$ such a linear isometry and write $\mathcal{H}_h = u_h(H_h)$. As a consequence of its definition, the restriction of $u_h$ to $H(R_h)$ is a linear isometry between $H(R_h)$ and $H(k_h)$. Recall that the parameter $h$ is restricted to $(0,1/2]$ because $k_h$ is not positive definite for $h\in(1/2,1]$ (see \cite{ehem} for a counterexample). In the next section, we will be more specific about the choice of this isometry. $u_h$ is isometrically extended to all of $W$ as in Theorem \ref{th:equivAWS}, and the extension is denoted by $\tilde{u}_h$. By this very Theorem, it is possible to define an abstract Wiener space which is the image of $(H_h,W,\mathcal{W}_h)$ by $u_h$, and we denote it $(\mathcal{H}_h, E_h, \mu_h)$, where $E_h=\tilde{u}_h(W)$ and $\mu_h = (\tilde{u}_h)_*\mathcal{W}_h$, the pushforward measure of $\mathcal{W}_h$ by $\tilde{u}_h$. The adjoint operator $\tilde{u}_h^T$ of $\tilde{u}_h$ is the mapping from $E_h^*$ to $W^*$ such that for any $w\in W$, any $w^*\in E_h^*$, $\langle w^*,\tilde{u}_h(w) \rangle = \langle \tilde{u}_h^T(w^*), w\rangle$. Since the space associated to $h=1/2$ plays a special part, we just drop the $h$ in the notations. Especially, $(H,W,\mathcal{W})$ denotes the standard Wiener space.

Now define $\tilde{\mathcal{K}}_h = (\tilde{u}^T)^{-1} \circ \mathcal{K}_h \circ u_h^{-1}$, the linear operator from $\mathcal{H}_h$ to $E^*$. For $\phi,\psi \in \mathcal{H}_h$,
\begin{align*}
(\phi, \psi)_{\mathcal{H}_h} &= (u_h^{-1}\phi, u_h^{-1}\psi)_{H_h} = \int_W \langle\mathcal{K}_h u_h^{-1}\phi,w \rangle \ \langle\mathcal{K}_h u_h^{-1}\psi,w \rangle \ {\rm d}\mathcal{W}(w)\\
& = \int_E \langle\mathcal{K}_h u_h^{-1}\phi, \tilde{u}^{-1}(x) \rangle \ \langle\mathcal{K}_h u_h^{-1}\psi,\tilde{u}^{-1}(x) \rangle \ {\rm d}\left(\tilde{u}_*\mathcal{W}\right)(x)\\
& = \int_E \langle\tilde{\mathcal{K}}_h \phi,x \rangle \ \langle\tilde{\mathcal{K}}_h \psi,x \rangle \ {\rm d}\mu(x) \ .
\end{align*}

\noindent When applied to $\phi = k_h(\cdot,f)$ and $\psi = k_h(\cdot,g)$, for $f,g\in L^2(T,m)$, the previous relation reads:
\begin{equation*}
k_h(f,g) = \int_E \langle\tilde{\mathcal{K}}_h k_h(\cdot,f),x \rangle \ \langle\tilde{\mathcal{K}}_h k_h(\cdot,g),x \rangle \ {\rm d}\mu(x) \ .
\end{equation*}

\begin{definition}[fractional Brownian field]
Let $(\Omega, \mathcal{F}, \PP)$ a probability space and $\mathbb{W}$ a white noise on $E$ associated to the measure $\mu$. The following formula defines a Gaussian random field over $(0,1/2]\times L^2(T,m)$:
\begin{equation*}
\forall (h,f) \in (0,1/2]\times L^2(T,m), \quad \BB_{h,f} = \int_E \langle \tilde{\mathcal{K}}_h k_h(\cdot,f), x \rangle \ \mathrm{d}\mathbb{W}(x) \ .
\end{equation*}
\end{definition}
The previous calculus proves that for fixed $h$, this process has covariance (\ref{eq:dotprod}), so $\BB$ is a $L^2$--fBf as defined in the introduction. Noticeably, if $\mathbb{W}^h$ is a white noise of $(\mathcal{H}_h,E_h,\mu_h)$, the process:
\begin{equation*}
\left\{\int_{E_h} \langle \ \mathbf{k}_h(\cdot,f), x\rangle \ {\rm d}\mathbb{W}^h_x, \ f\in L^2(T,m)\right\}
\end{equation*}
and $\left\{\BB_{h,f}, \ f\in L^2(T,m)\right\}$ have the same law, where $\mathbf{k}_h(\cdot,f) \in E_h^*$ is the pre-image of $k_h(\cdot,f)$ by the canonical injection of $E_h^*\rightarrow \mathcal{H}_h$. The same calculus as in the proof of Corollary \ref{Cor:RepLaw}, shows that the covariance of the new process is given by:
\begin{equation}\label{eq:VarL2fBf}
\EE\left( \BB_{h,f} \ \BB_{h',g} \right) = \left(K_h^{-1}u_h^{-1}k_h(f,.), K_{h'}^{-1}u_{h'}^{-1}k_{h'}(g,.)\right)_{L^2[0,1]} \ .
\end{equation}
Hence, the choice of the family of isometries $\{u_h\}_{h\in (0,1/2]}$ matters.

Let us set up a family of isometries $u = \{u_h\}_{h\in (0,1/2]}$ allowing the fBf to have good increments, as discussed already. Let $\mathbb{D} = \{t_n, n\in \N\}$ be the set of dyadics in $[0,1]$. Let $h\in(0,1/2]$, then from the definition of $H(R_h)$, $\{R_h(\cdot,t), t\in \mathbb{D}\}$ is a linear basis of this space (although the linear independence is proved in the following lemma). We shall establish the existence of a family of functions $\{f_n\in L^2(T,m), n\in \N\}$ such that $\{k_h(\cdot,f_n), n\in \N\}$ is a linear basis for $\mathcal{H}_h$.

\begin{lemma}\label{lem:LinInd}
Let $h\in (0,1/2]$. Let $n\in \N^*$ and $(f_0,\dots,f_n)\in L^2(T,m)$ be linearly independent. Then, $(k_h(\cdot,f_0),\dots, k_h(\cdot,f_n))$ is linearly independent in $\mathcal{H}_h$.
\end{lemma}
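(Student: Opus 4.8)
The plan is to reduce the claim to a non-degeneracy property of the kernel $k_h$ and then to exploit the lack of smoothness of $r\mapsto r^{4h}$ at the origin. First I would note that the vectors $k_h(\cdot,f_0),\dots,k_h(\cdot,f_n)$ already lie in $H(k_h)=u_h\big(H(R_h)\big)\subseteq\mathcal{H}_h$, so their linear independence in $\mathcal{H}_h$ is equivalent to their linear independence inside the reproducing kernel Hilbert space $H(k_h)$; I may therefore work entirely in $H(k_h)$. Suppose then that $\sum_{i=0}^n a_i\,k_h(\cdot,f_i)=0$ in $H(k_h)$ for some reals $a_0,\dots,a_n$. Taking the $H(k_h)$-scalar product with $k_h(\cdot,g)$ and invoking the reproducing property (\emph{iii}) of Definition \ref{def:RKHS}, this is equivalent to $\sum_{i=0}^n a_i\,k_h(f_i,g)=0$ for every $g\in L^2(T,m)$. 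Writing out (\ref{eq:dotprod}) with $m(f^2)=\|f\|_{L^2}^2$ and setting $A=\sum_{i} a_i$, the problem becomes the functional equation
\begin{equation*}
\sum_{i=0}^n a_i\,\|f_i-g\|_{L^2}^{4h} \;=\; A\,\|g\|_{L^2}^{4h} + \sum_{i=0}^n a_i\,\|f_i\|_{L^2}^{4h}, \qquad \forall\, g\in L^2(T,m),
\end{equation*}
from which I must deduce $a_0=\dots=a_n=0$.

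The heart of the argument, for $h\in(0,1/2)$, is that the exponent $4h$ is not an even integer, so $r\mapsto r^{4h}$ fails to be real-analytic at $r=0$. I would fix an index $j$ and a unit vector $v\in L^2(T,m)$ and restrict the displayed identity to the line $g=f_j+s\,v$, $s\in\R$. Then $\|f_j-g\|_{L^2}^{4h}=|s|^{4h}$, while for $i\neq j$ the quantity $\|f_i-f_j-s\,v\|_{L^2}^2$ is a degree-two polynomial in $s$ that is strictly positive near $s=0$ (here $f_i\neq f_j$ is used), so $s\mapsto\|f_i-f_j-s\,v\|_{L^2}^{4h}$ is real-analytic there; likewise $s\mapsto\|f_j+s\,v\|_{L^2}^{4h}$ is real-analytic near $0$ because $f_j\neq 0$. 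Collecting terms, $a_j\,|s|^{4h}$ is forced to coincide with a function that is real-analytic in a neighbourhood of $s=0$. Since $|s|^{4h}$ is not even twice differentiable at the origin when $4h\in(0,2)$, this yields $a_j=0$, and repeating over $j=0,\dots,n$ gives the result. Linear independence of $(f_0,\dots,f_n)$ enters exactly at this point: it guarantees that every $f_j$ is nonzero and that the $f_j$ are pairwise distinct, which is precisely what makes the competing terms analytic.

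The endpoint $h=1/2$ has to be treated separately, since then $4h=2$ and $|s|^2=s^2$ is smooth, so the analyticity argument collapses; this case is however immediate, because (\ref{eq:dotprod}) gives $k_{1/2}(f,g)=(f,g)_m$, whence $f\mapsto k_{1/2}(\cdot,f)$ is the Riesz isometry and automatically sends a linearly independent family to a linearly independent one. I expect the main obstacle to be this boundary value $4h=2$ together with the clean passage from ``vanishing in $H(k_h)$'' to the pointwise functional equation above; both are resolved by the reproducing property and by isolating the single non-analytic term $a_j|s|^{4h}$ on a well-chosen line. If one wished to avoid the case distinction altogether, an alternative would be to use a harmonizable representation of the $L^2$-indexed L\'evy fractional Brownian motion and reduce the question to the linear independence of the characters $g\mapsto e^{i(f_j,g)_m}$, but the elementary analyticity argument above seems the most economical.
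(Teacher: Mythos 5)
Your proof is correct, but it takes a genuinely different route from the paper's. The paper also dispatches $h=1/2$ via $k_{1/2}(f,g)=(f,g)_m$, but for $h\in(0,1/2)$ it proceeds quite differently: it evaluates the relation at $g=\lambda g_0$ with $g_0$ orthogonal to all the $f_i$, so that $k_h(f_i,\lambda g_0)=R_h\bigl(m(f_i^2),\lambda^2 m(g_0^2)\bigr)$, thereby reducing the problem to the linear independence of the one-parameter kernels $R_h(\cdot,t)$ at distinct $t$; since this only separates the $f_i$ according to the value of $m(f_i^2)$, the groups of $f_i$ sharing a common norm must then be handled by hand (explicit evaluations at well-chosen $g$ leading to the numerical contradiction $4=2^{2h+1}$), together with an induction on $n$. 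Your argument --- restricting the functional equation to the line $g=f_j+sv$ and observing that $a_j|s|^{4h}$ is the unique term failing to be $C^2$ at $s=0$ while all competing terms are real-analytic there --- reaches the conclusion in one stroke, with no case distinction on the norms and no induction; the endpoint $4h=2$ is correctly isolated and treated by the same Riesz argument as in the paper. Your route even yields a slightly stronger statement for $h\in(0,1/2)$: pairwise distinctness and non-vanishing of the $f_i$ already suffice, full linear independence being needed only at $h=1/2$ (whereas the paper's final step requires a $g$ orthogonal to $f_1,\dots,f_n$ but not to $f_{n+1}$, hence genuinely uses linear independence throughout). What the paper's approach buys is an explicit link back to the one-dimensional kernels $R_h$, in the spirit of the rest of Section 2; as a proof of the stated lemma, yours is the more economical and the more general.
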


The proof is reported in Appendix \ref{App:lemLinInd}. This lemma suggests that we will choose a dense (countable) linear basis of $L^2(T,m)$. This can be done as follows. $L^2(T,m)$ is a separable metric space, hence it admits a countable topological basis\footnote{any separable metric space is second-countable.}, so denote by $(O_n)_{n\in\N}$ this basis of open sets for the topology of $L^2(T,m)$. Let us prove inductively the existence of a dense linearly independent family $(f_n)_{n\in \N}$. Let $f_0\in O_0$ and assume that $(f_0,\dots,f_n)$ already exist. ${F_n = \textrm{Span}\{f_0,\dots,f_n\}}$ is a finite-dimensional subspace of an infinite-dimensional space. Therefore $F_n$ is of empty interior and one can pick $f_{n+1}\in O_{n+1}\setminus F_n$. This mechanism provides a linearly independent family, which is dense since its intersection with any $O_n$ is non-empty.

From now on, $\left\{f_n, n\in \N \right\}$ denotes such a family, and for any $h\in (0,1/2]$, Lemma \ref{lem:LinInd} implies that $\left\{ k_h(\cdot,f_n), n\in \N \right\}$ is linearly independent in $H(k_h)$. That $\{f_n\}_{n\in \N}$ is dense in $L^2$ yields that this is actually a spanning family of $H(k_h)$. Let us prove this last statement:
let $K\in H(k_h)$, so that there is a sequence of real numbers $(\alpha_n)_{n\in \N}$ and a sequence $(g_n)_{n\in \N}$ of elements of $L^2(m)$, such that $\sum_{n\in\N} \alpha_n k_h(\cdot,g_n)$ is equal to $K$ in $H(k_h)$. Let $\varepsilon>0$ and for any $n\in \N$, let ${\varepsilon_n = \varepsilon\ |\alpha_n|^{-1}\ (n+1)^{-2}}$. For any $n$, there is $f_{i_n}$ in the dense basis of $L^2(m)$ that satisfies ${\|g_n - f_{i_n}\|_{L^2}^{2h} < \varepsilon_n}$. Let $N\in \N$ such that for all $p\geq N$, $\|K - \sum_{n=0}^p \alpha_n k_h(\cdot,g_n)\|_{H(k_h)} < \varepsilon$. Then:
\begin{align*}
\big\|K - \sum_{n=0}^p \alpha_n k_h(\cdot,f_{i_n}) \big\|_{\mathcal{H}_h} &\leq \big\|K - \sum_{n=0}^p \alpha_n k_h(\cdot,g_n) \big\|_{\mathcal{H}_h} + \big\|\sum_{n=0}^p \alpha_n k_h(\cdot,g_n) - \sum_{n=0}^p \alpha_n k_h(\cdot,f_{i_n}) \big\|_{\mathcal{H}_h}\\
&\leq \varepsilon + \sum_{n=0}^p |\alpha_n| \ \|k_h(\cdot,g_n) - k_h(\cdot,f_{i_n}) \|_{\mathcal{H}_h} \\
&\leq \varepsilon + \sum_{n=0}^p |\alpha_n| \ \|g_n-f_{i_n}\|_{L^2}^{2h} \\
&\leq 3\varepsilon \ .
\end{align*}

The same reasoning shows that $\left\{ R_h(\cdot,t_n), n\in \N \right\}$ is a basis for $H(R_h)$.
$\left\{\underline{R}_h(\cdot,t_n), n\in \N\right\}$ and $\left\{\underline{k}_h(\cdot,f_n), n\in \N\right\}$ stand for the two corresponding orthogonal bases obtained from the Gram-Schmidt process. Then, the linear map ${v_h: H(R_h) \rightarrow H(k_h)}$ can be properly defined:
\begin{equation}\label{eq:defIsomlin}
\forall n\in \N, \quad v_h\left( \underline{R}_h(\cdot,t_n) \right) = \frac{\|\underline{R}_h(\cdot,t_n)\|}{\|\underline{k}_h(\cdot,f_n)\|} \ \underline{k}_h(\cdot,f_n) \ .
\end{equation}
This mapping is an isometry. Enlarging the family $\left\{\underline{R}_h(\cdot,t_n), n\in \N\right\}$ into a complete orthogonal system of $H_h$, $v_h$ is extended to an isometry $u_h$ mapping $H_h$ to $u_h(H_h) = \mathcal{H}_h$. In the following $v_h$ and $u_h$ are both denoted by $u_h$. From now on, $\BB$ will always refer to a fractional Brownian field built from this particular kind of isometry.

\section{Variance of the $h$-increments}\label{sec:hVar}

This section is devoted to proving that the different fractional Brownian fields we can construct satisfy inequalities of the type (\ref{eq:Hincrement}), and in particular the $L^2$-fBf built with the family of isometries we just introduced.

\subsection{The one-dimensional generalised fractional Brownian field}

Firstly, the question is answered in the standard framework of the field indexed over ${(0,1)\times[0,1]}$, and to a larger extent to the corresponding generalised field. Before the main result, we need the following lemma and proposition, whose proofs are reported in Appendix \ref{App:borneKh}.

\begin{lemma}\label{lem:posLF}
For every $n\in \N$, $\underline{R}_h(\cdot, t_n)$ is a positive linear functional, in the sense that for any nonnegative function $g\in H_h$, $\left(\underline{R}_h(\cdot, t_n),g\right)_{H_h}\geq 0$.
\end{lemma}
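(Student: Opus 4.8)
The plan is to reduce the asserted positivity to a pointwise statement about the Gram--Schmidt functions, and then to establish that pointwise statement by a level-by-level analysis of the dyadic construction. Throughout write $\phi_n := \underline{R}_h(\cdot,t_n) \in H(R_h) \subseteq H_h$; by Theorem~\ref{th:structOp} this is a genuine continuous function on $[0,1]$ and the two scalar products agree on it. The first step is to observe that, by the reproducing property together with $R_h(\cdot,t) = R_h\delta_t$, one has $(\phi_n, R_h(\cdot,t))_{H_h} = \phi_n(t) = \underline{R}_h(t,t_n)$ for every $t$. Since the nonnegative elements of $H_h$ relevant here are generated, as nonnegative combinations and limits, by the kernel sections $R_h(\cdot,t)$ (each representing evaluation at $t$ through the canonical injection of $\delta_t \in W^*$), the positivity asserted in the lemma is equivalent to the pointwise bound $\underline{R}_h(t,t_n) \geq 0$ for all $t \in [0,1]$. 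Thus it suffices to prove this pointwise nonnegativity.

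Next I would record the structural constraints on $\phi_n$, which already fix its shape up to the behaviour between nodes. Because $R_h(0,\cdot) = 0$ we get $\phi_n(0) = 0$; because $\underline{R}_h(\cdot,t_n)$ is orthogonal in $H_h$ to $\textrm{Span}\{R_h(\cdot,t_k): k<n\}$, the reproducing property gives $\phi_n(t_k) = (\phi_n, R_h(\cdot,t_k))_{H_h} = 0$ for every $k<n$; and $\phi_n(t_n) = (\phi_n, R_h(\cdot,t_n))_{H_h} = \|\underline{R}_h(\cdot,t_n)\|_{H_h}^2 \geq 0$. Hence $\phi_n$ is a continuous function vanishing at $0$ and at all previously enumerated dyadics and nonnegative at $t_n$; it remains only to exclude negative excursions between consecutive nodes.

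For $h = 1/2$ this is transparent: transporting to $L^2([0,1])$ via $K_{1/2}^{-1}$ sends $R_{1/2}(\cdot,t_k) = K_{1/2}K_{1/2}^*\delta_{t_k}$ to the indicator $\mathbf{1}_{[0,t_k]}$ of an initial segment, so the Gram--Schmidt system is the orthogonalisation of these indicators, i.e. (after the first vector) the Haar functions; applying $K_{1/2}$ back integrates them, producing nonnegative tent functions supported on single dyadic intervals, and a linear ramp for the first vector, whence the claim. For a general $h \in (0,1/2]$ the images $K_h^*\delta_{t_k} = K_h(t_k,\cdot)$ are no longer indicators, the increments of the underlying process are dependent, and this Haar/tent localisation is lost. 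I would therefore argue by induction on the dyadic level, writing $\phi_n(t) = \int_0^1 K_h(t,s)\,\underline{\psi}_n(s)\,\mathrm{d}s$, where $\underline{\psi}_n = K_h^{-1}\phi_n$ is the $L^2$ Gram--Schmidt residual of $\{K_h(t_k,\cdot)\}_{k\le n}$, and controlling the sign of this integral between the forced zeros by means of the monotonicity and positivity estimates on the Volterra kernel $K_h$ collected in Appendix~\ref{App:borneKh}.

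The \emph{main obstacle} is precisely this last point: for $h \neq 1/2$ one must rule out negative dips of $\phi_n$ strictly between its prescribed zeros without the piecewise-linear structure available at $h = 1/2$. This is where the fine behaviour of the fractional kernel $K_h$ enters and where the quantitative estimates of the appendix do the real work; everything preceding it is bookkeeping with the reproducing kernel and the Gram--Schmidt recursion.
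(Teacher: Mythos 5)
Your proposal does not close the lemma: the step you yourself flag as the ``main obstacle'' --- ruling out negative excursions of $\underline{R}_h(\cdot,t_n)$ between its forced zeros when $h\neq 1/2$ --- is essentially the whole content of the statement, and you leave it as a programme (``induction on the dyadic level\ldots controlling the sign\ldots by means of the monotonicity and positivity estimates\ldots collected in Appendix~\ref{App:borneKh}'') rather than an argument. Worse, the tools you point to are the wrong ones: Lemma~\ref{lem:app2} and Proposition~\ref{prop:app1} are $L^2$ bounds on the $h$-increments of $K_h^*$ and say nothing about the sign of $\phi_n$ between nodes. The only input from the appendix that matters is the closed form (\ref{eq:K_h}), which exhibits $K_h(t,s)$ as a sum of nonnegative terms, hence pointwise nonnegative. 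With that in hand the paper needs no level-by-level induction and no Haar/tent picture: it transports the Gram--Schmidt construction to $L^2([0,1])$ by observing that $\underline{K}_h(t_n,\cdot)=K_h^{-1}\underline{R}_h(\cdot,t_n)$ is exactly the $L^2$ Gram--Schmidt residual of the nonnegative kernels $K_h(t_k,\cdot)$ (equation (\ref{eq:K_h-GS})); the correction terms lie in ${\rm Span}\{\underline{K}_h(t_j,\cdot),\,j<n\}$ and therefore vanish when paired with anything orthogonal to that span, so that pairing a nonnegative such $g$ with $\underline{K}_h(t_n,\cdot)$ reduces to $\int_0^1 g(s)K_h(t_n,s)\,{\rm d}s\geq 0$. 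This yields $\underline{R}_h(t_j,t_n)=\bigl(\underline{K}_h(t_n,\cdot),K_h(t_j,\cdot)\bigr)_{L^2}\geq 0$ for every dyadic point, and the conclusion follows by continuity and density. Your structural observations (the forced zeros $\phi_n(t_k)=0$ for $k<n$, the value $\phi_n(t_n)=\|\phi_n\|_{H_h}^2$, the $h=1/2$ Haar computation) are correct but are bookkeeping that never engages the mechanism that actually produces the sign.

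A secondary issue is your opening reduction. You assert that positivity of the functional is \emph{equivalent} to the pointwise bound $\underline{R}_h(t,t_n)\geq 0$ on the grounds that nonnegative elements of $H_h$ are ``generated, as nonnegative combinations and limits, by the kernel sections.'' Only one implication is automatic: testing against $g=R_h(\cdot,t)$ gives pointwise nonnegativity. The converse requires that every nonnegative $g\in H_h$ be approximable by combinations of kernel sections with \emph{nonnegative} coefficients, which is not a property of a general nonnegative element of a reproducing kernel Hilbert space and which you do not justify. So even granting the unproved sign analysis, the return trip from the pointwise statement to the asserted positivity of the functional would still need an argument.
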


\noindent Let $W^*_+$ denote the set of positive linear functionals over $W$. As can be seen from equation (\ref{eq:dualite}), any element of $W_+^*$ can also be considered as a finite nonnegative measure.

\begin{proposition}\label{prop:app1}
For all $\eta>0$, there exists a constant $M_{\eta}>0$, such that for all $h_1<h_2 \in (\eta,1/2-\eta)$,  for all $\xi\in W^*_+$,
\begin{equation*}
\int_0^1 \left( K_{h_2}^*\xi (u) - K_{h_1}^*\xi (u) \right)^2 \ {\rm d}u \leq M_{\eta} \left((h_2-h_1) L(h_2-h_1)\right)^2  \|\xi\|_{H_{h_1}^*}^2 \ ,
\end{equation*}
where for all $x\in(-1,1)$, $L(x) = \log(|x|^{-1}) \vee 1$ if $x\neq 0$, and $0$ otherwise.
\end{proposition}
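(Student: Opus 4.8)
The plan is to transfer the problem from the functional $\xi$ to the kernel $K_h$, to control the dependence of $K_h$ on $h$ through its explicit integral expression, and to use the positivity of $\xi$ to collapse the arising double integral. The right-hand side is the natural one: by point \ref{DU2}), $K_h\circ K_h^*=R_h$ on $W^*$, whence $\|K_h^*\xi\|_{L^2}^2=\langle\xi,R_h\xi\rangle=\|\xi\|_{H_h^*}^2$ and in particular $\|K_{h_1}^*\xi\|_{L^2}=\|\xi\|_{H_{h_1}^*}$. So the inequality asserts that $K_{h_2}^*\xi-K_{h_1}^*\xi$ is of relative size $(h_2-h_1)L(h_2-h_1)$ in $L^2$. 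Since $\xi\in W^*_+$ is a nonnegative measure, I would first use Minkowski's integral inequality to bring the $L^2([0,1])$-norm inside the integration against $\xi$: with $\Delta_h(t,\cdot)=K_{h_2}(t,\cdot)-K_{h_1}(t,\cdot)$, the left-hand side is at most $\big(\int_0^1\|\Delta_h(t,\cdot)\|_{L^2}\,\xi(\mathrm{d}t)\big)^2$. This is precisely where the assumption $\xi\in W^*_+$ enters, and it reduces the statement to a uniform bound on the single-time increment $\|\Delta_h(t,\cdot)\|_{L^2}=\big(\EE(\BB_{h_2,t}-\BB_{h_1,t})^2\big)^{1/2}$, followed by a comparison of the total mass so produced with $\|\xi\|_{H_{h_1}^*}$.

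The heart of the matter is the estimate $\|\Delta_h(t,\cdot)\|_{L^2}\leq C_\eta\,(h_2-h_1)L(h_2-h_1)$, uniform in $t\in[0,1]$ and in $h_1<h_2$ lying in $(\eta,1/2-\eta)$, a refinement of (\ref{eq:Hincrement}) carrying the logarithmic correction. Here I would use the explicit kernel $K_h(t,s)$ recalled in Appendix \ref{App:borneKh}, whose $h$-dependence sits in the normalising constant $c_h$ and in power-law factors such as $(t-s)^{h-1/2}$ and $(t/s)^{h-1/2}$. Writing $\Delta_h=\int_{h_1}^{h_2}\partial_h K_h\,\mathrm{d}h$ produces the factor $(h_2-h_1)$, while $\partial_h K_h$ is the same kernel weighted by logarithmic factors $\log(t-s)$, $\log t$, $\log s$. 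These have finite $L^2$-norm once $h$ is bounded away from $0$ and $1/2$, but to obtain the sharp behaviour one must split the $s$-integration at the diagonal scale $|t-s|\sim h_2-h_1$, using the first-order expansion above that scale and a crude bound below it; the crossover is exactly what converts the $\log$ in $\partial_h K_h$ into the factor $L(h_2-h_1)$. I expect this near-diagonal analysis, together with the bookkeeping required to keep all constants uniform, to be the main obstacle.

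Two degeneracies of the kernel, as $h\to0$ (where $h-1/2\to-1/2$ and $c_h$ blows up) and as $h\to1/2$, are what forbid uniformity on all of $(0,1/2)$ and force the restriction to $(\eta,1/2-\eta)$ with $M_\eta$ depending on $\eta$. The remaining point is the passage from the total mass delivered by Minkowski to the genuinely smaller norm $\|\xi\|_{H_{h_1}^*}$; here the positive-functional structure (cf. Lemma \ref{lem:posLF}) is what is available, and reconciling the small prefactor with this sharper norm is the most delicate step, where the argument is most sensitive to the precise normalisation of the $K_h$ and to the fine behaviour of the kernel near the origin.
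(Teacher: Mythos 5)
There is a genuine gap, and it sits exactly where you flag ``the most delicate step'': the Minkowski reduction destroys the information needed to recover the norm $\|\xi\|_{H_{h_1}^*}$. After you write $\|K_{h_2}^*\xi-K_{h_1}^*\xi\|_{L^2}\leq\int_0^1\|\Delta_h(t,\cdot)\|_{L^2}\,\xi({\rm d}t)$ and insert a bound on $\|\Delta_h(t,\cdot)\|_{L^2}$ that is uniform in $t$, the right-hand side becomes $C_\eta\,(h_2-h_1)L(h_2-h_1)\,\xi([0,1])$, i.e.\ the total mass $\|\xi\|_{W^*}$, not $\|\xi\|_{H_{h_1}^*}$. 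These norms are not comparable in the direction you need: $\|\delta_t\|_{H_{h_1}^*}^2=R_{h_1}(t,t)=t^{2h_1}\to0$ as $t\to0$ while $\|\delta_t\|_{W^*}=1$, so the ratio $\|\xi\|_{W^*}/\|\xi\|_{H_{h_1}^*}$ is unbounded over $W^*_+$. Your route therefore proves only the strictly weaker inequality with $\|\xi\|_{W^*}^2$ on the right (which would still suffice for Corollary \ref{Prop:ContfbfR}, but not for the statement as given nor for the SPDE application, which needs the $H^*$-norms to tensorize). No refinement of the single-time estimate can repair this, because the loss occurs in the decoupling step itself; and note that the single-time estimate you take as the ``heart of the matter'' is precisely the Proposition for $\xi=\delta_t$, so the plan is circular in spirit: it proves the case of Dirac masses and then integrates in a way that forfeits the sharper norm.

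The paper's proof never decouples the $\xi$-integration from the $s$-variable. It writes $K_{h_2}-K_{h_1}=\int_{h_1}^{h_2}\partial_hK_h\,{\rm d}h$ and applies Cauchy--Schwarz in $h$ (this part matches your plan), but the positivity of $\xi$ is then used \emph{pointwise in $s$}: each piece of $\partial_hK_h(t,s)$ is dominated by a constant times $K_{h\pm\nu}(t,s)$ or $C_{h_1}(t,s)$ (the logarithms being absorbed via $|\log u|\,u^{\nu}\leq(e\nu)^{-1}$ with $\nu=\eta/2$, which is where the restriction to $[\eta,1/2-\eta]$ enters), so that $\int\partial_hK_h(t,s)\,\xi({\rm d}t)$ is controlled by $K_{h_1}^*\xi(s)$ itself; taking the $L^2$-norm in $s$ then yields exactly $\|K_{h_1}^*\xi\|_{L^2}=\|\xi\|_{H_{h_1}^*}$ (Lemma \ref{lem:app2}). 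The factor $L(h_2-h_1)$ arises there not from a crossover at scale $|t-s|\sim h_2-h_1$ but from the uniform bound on $\bigl((s^{-1}t(t-s))^{h_2-h_1}-1\bigr)/\bigl((h_2-h_1)L(h_2-h_1)\bigr)$ on the region where $s^{-1}t(t-s)\leq1$. To salvage your argument you would have to replace the uniform-in-$t$ bound by a $t$-dependent one of the form $\|\Delta_h(t,\cdot)\|_{L^2}\lesssim(h_2-h_1)L(h_2-h_1)\,\|K_{h_1}(t,\cdot)\|_{L^2}$ \emph{and} still avoid Minkowski, which effectively forces you back to the kernel-domination argument of the paper.
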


\begin{theorem}\label{prop:varInc1dfBf}
Let $\BB_{h,\xi}$ be the generalised fBf defined in (\ref{eq:1dgfBf}). For all $\eta>0$, there exists a constant $M_{\eta}>0$, such that for all $h_1<h_2 \in (\eta,1/2-\eta)$,  for all $\xi\in W^*$,
\begin{equation*}
\EE\left(\BB_{h_1,\xi} - \BB_{h_2,\xi}\right)^2 \leq M_{\eta} \left((h_2-h_1) L(h_2-h_1)\right)^2  \|\xi\|_{H_{h_1}^*}^2 \ .
\end{equation*}
\end{theorem}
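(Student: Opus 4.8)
The plan is to reduce the variance of the $h$-increment to the deterministic $L^2$-quantity already estimated in Proposition \ref{prop:app1}, and then to lift that estimate from positive functionals to arbitrary ones. First I would use that $\mathbb{B}$ is a white noise with control measure $\mathcal{W}$, so that (\ref{eq:1dgfBf}) realises an isometry from $L^2(W,\mathcal{W})$ into $L^2(\Omega)$; expanding the square and invoking the (generalised) covariance identity behind Corollary \ref{Cor:RepLaw}, namely $K_{1/2}^*\tilde{J}_h^* = K_h^*$ and hence $\EE(\BB_{h,\xi}\BB_{h',\nu}) = (K_h^*\xi, K_{h'}^*\nu)_{L^2}$, the three resulting terms collapse into a single square:
\begin{equation*}
\EE\left(\BB_{h_1,\xi} - \BB_{h_2,\xi}\right)^2 = \int_0^1 \left(K_{h_1}^*\xi(u) - K_{h_2}^*\xi(u)\right)^2 \ {\rm d}u .
\end{equation*}
This is exactly the left-hand side of Proposition \ref{prop:app1}. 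Moreover, since $R_{h_1} = K_{h_1}\circ K_{h_1}^*$ is the canonical injection and $K_{h_1}$ is an isometry from $L^2([0,1])$ onto $H_{h_1}$, one has $\|\xi\|_{H_{h_1}^*} = \|R_{h_1}\xi\|_{H_{h_1}} = \|K_{h_1}^*\xi\|_{L^2}$, so the inequality to be proved is precisely the $L^2$-estimate of Proposition \ref{prop:app1}, but now for an arbitrary signed $\xi\in W^*$ in place of $\xi\in W^*_+$.

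For $\xi\in W^*_+$ the conclusion is immediate. To reach a general $\xi\in W^*$, the natural move is to pass through the Hahn--Jordan decomposition $\xi = \xi_+ - \xi_-$ of the associated finite signed measure (see (\ref{eq:dualite})), with $\xi_\pm\in W^*_+$, and apply the triangle inequality in $L^2([0,1])$ to $K_{h_1}^*\xi - K_{h_2}^*\xi = (K_{h_1}^*\xi_+ - K_{h_2}^*\xi_+) - (K_{h_1}^*\xi_- - K_{h_2}^*\xi_-)$, bounding each bracket by Proposition \ref{prop:app1}.

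The main obstacle is exactly this last step. Proposition \ref{prop:app1} controls each bracket by $\sqrt{M_\eta}\,(h_2-h_1)L(h_2-h_1)\,\|\xi_\pm\|_{H_{h_1}^*}$, but $\|\xi_+\|_{H_{h_1}^*} + \|\xi_-\|_{H_{h_1}^*}$ cannot be bounded by a fixed multiple of $\|\xi\|_{H_{h_1}^*}$: the cross term $(K_{h_1}^*\xi_+, K_{h_1}^*\xi_-)_{L^2} = \iint R_{h_1}(s,t)\,\xi_+({\rm d}s)\,\xi_-({\rm d}t)$ is nonnegative (as $R_{h_1}\ge 0$), so the decomposition only inflates the norm, and the inflation is unbounded when the supports of $\xi_+$ and $\xi_-$ nearly coincide. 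The correct formulation is therefore operator-theoretic: writing $\phi = K_{h_1}^*\xi$, the target inequality reads $\|(I - K_{h_2}^*(K_{h_1}^*)^{-1})\phi\|_{L^2} \le \sqrt{M_\eta}\,(h_2-h_1)L(h_2-h_1)\,\|\phi\|_{L^2}$, i.e. a bound on the operator $I - K_{h_2}^*(K_{h_1}^*)^{-1}$ on all of $L^2([0,1])$, equivalently the domination of the positive-definite kernel $(K_{h_1}^*-K_{h_2}^*)^*(K_{h_1}^*-K_{h_2}^*)$ by $M_\eta\big((h_2-h_1)L(h_2-h_1)\big)^2 R_{h_1}$.

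Proposition \ref{prop:app1} supplies precisely this kernel domination, but tested only against the positive cone $K_{h_1}^*(W^*_+)$; the genuine work is to upgrade it to all of $L^2([0,1])$, since a quadratic form bounded on a cone need not be bounded on the spanned subspace. I expect this upgrade — which should exploit the explicit integral form of the operators $K_h$ (Appendix \ref{App:borneKh}) together with the positivity encoded in Lemma \ref{lem:posLF} — to be the real content of the argument, the reduction of the first paragraph being essentially bookkeeping built on the isometry (\ref{eq:1dgfBf}) and Corollary \ref{Cor:RepLaw}.
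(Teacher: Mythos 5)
Your reduction in the first paragraph is correct and is exactly how the paper begins: the white-noise isometry together with $K_{1/2}^*\tilde{J}_h^* = K_h^*$ gives $\EE\left(\BB_{h_1,\xi}-\BB_{h_2,\xi}\right)^2 = \|K_{h_1}^*\xi - K_{h_2}^*\xi\|_{L^2}^2$ and $\|\xi\|_{H_{h_1}^*} = \|K_{h_1}^*\xi\|_{L^2}$, so the theorem is precisely Proposition \ref{prop:app1} extended from $W^*_+$ to $W^*$. Your diagnosis that the Hahn--Jordan decomposition of the measure $\xi$ fails --- because $\|\xi_+\|_{H_{h_1}^*}^2 + \|\xi_-\|_{H_{h_1}^*}^2$ exceeds $\|\xi\|_{H_{h_1}^*}^2$ by an uncontrolled nonnegative cross term --- is also accurate. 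But the argument stops there: you correctly identify that an upgrade from the positive cone to all of $W^*$ is needed and guess that Lemma \ref{lem:posLF} is involved, yet you do not produce the decomposition that makes it work. That step is the entire content of the paper's proof, so as it stands the proposal has a genuine gap.

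The missing idea is to decompose $\xi$ not as a measure but along the orthogonal system $\{\underline{R}_{h_1}(\cdot,t_n)\}_{n\in\N}$ in $H_{h_1}$: writing $R_{h_1}\xi = \sum_n c_n \|\underline{R}_{h_1}(\cdot,t_n)\|_{H_{h_1}}^{-2}\,\underline{R}_{h_1}(\cdot,t_n)$ with $c_n = (R_{h_1}\xi, \underline{R}_{h_1}(\cdot,t_n))_{H_{h_1}}$, one defines $\xi_+$ (resp.\ $\xi_-$) by keeping only the terms with $c_n\geq 0$ (resp.\ the negatives of the terms with $c_n<0$). This removes both obstacles at once: orthogonality of the basis yields the exact Pythagorean identity $\|\xi\|_{H_{h_1}^*}^2 = \|\xi_+\|_{H_{h_1}^*}^2 + \|\xi_-\|_{H_{h_1}^*}^2$, so there is no norm inflation, while Lemma \ref{lem:posLF} asserts that each $\underline{R}_{h_1}(\cdot,t_n)$ is a positive linear functional, so $\xi_+$ and $\xi_-$, being nonnegative combinations of these, are positive functionals to which Proposition \ref{prop:app1} applies. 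Your reformulation as a kernel-domination or operator-norm statement on all of $L^2([0,1])$ is a legitimate alternative target, but you give no mechanism for proving it, whereas the orthogonal splitting above is the elementary bridge from the cone to the whole space that the theorem actually rests on.
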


\begin{proof}
If $\xi\in W^*$, then it also belongs to $H_{h_1}^*$. Note that as an element of $H_{h_1}$, the image of $\xi$ by $R_{h_1}$ reads:
\begin{align*}
R_{h_1}\xi &= \sum_{n=0}^{\infty} \frac{(R_{h_1}\xi,\underline{R}_{h_1}(t_n,\cdot))_{H_{h_1}}}{\|\underline{R}_{h_1}(t_n,\cdot)\|_{H_{h_1}}^{2}} \ \underline{R}_{h_1}(t_n,\cdot) \\
&= \sum_{n=0}^{\infty} \frac{\left\{(R_{h_1}\xi,\underline{R}_{h_1}(t_n,\cdot))_{H_{h_1}} \vee 0\right\}}{\|\underline{R}_{h_1}(t_n,\cdot)\|_{H_{h_1}}^{2}} \ \underline{R}_{h_1}(t_n,\cdot) - \sum_{n=0}^{\infty} \frac{\left\{-(R_{h_1}\xi,\underline{R}_{h_1}(t_n,\cdot))_{H_{h_1}} \vee 0\right\}}{\|\underline{R}_{h_1}(t_n,\cdot)\|_{H_{h_1}}^{2}} \ \underline{R}_{h_1}(t_n,\cdot) \\
&= R_{h_1}\xi_+ - R_{h_1}\xi_- \ .
\end{align*}
Since $\|\xi\|_{H_{h_1}^*}^2 = \|\xi_+\|_{H_{h_1}^*}^2 + \|\xi_-\|_{H_{h_1}^*}^2$, it suffices to notice that:
\begin{align*}
\EE\left(\BB_{h_1,\xi_{\pm}} - \BB_{h_2,\xi_{\pm}}\right)^2 = \left\|K_{h_1}^* \xi_{\pm} - K_{h_2}^* \xi_{\pm}\right\|_{L^2}^2 \ ,
\end{align*}
and then to apply Proposition \ref{prop:app1} to $\xi_+$ and $\xi_-$ (which are positive linear functionals of $W^*$, according to Lemma \ref{lem:posLF} and the density of $H_{h_1}$ in $W$).
\end{proof}

While the $W^*$-norm is rougher than a $H_h^*$-norm, it suffices in the following application. Indeed, the next proposition follows by taking $\xi = \delta_t\in W_+^*, t\in[0,1]$ because then $\|\delta_t \|_{H_h^*} \leq C_h \|\delta_t\|_{W^*} = C_h$, where $C_h$ is the norm of the canonical injection from $W^*$ to $H_h^*$.

\begin{corollary}\label{Prop:ContfbfR}
The fractional Brownian field on $(0,1/2)\times [0,1]$ has a continuous version.
\end{corollary}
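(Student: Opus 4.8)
The plan is to obtain the continuous version from the Kolmogorov--Chentsov criterion, after controlling the canonical ($L^2(\Omega)$) metric of the field separately in its two coordinates. Since continuity is a local property and $(0,1/2)\times[0,1]=\bigcup_{n}[\eta_n,1/2-\eta_n]\times[0,1]$ for any sequence $\eta_n\downarrow 0$, it suffices to build a continuous version on each compact rectangle $K_\eta:=[\eta,1/2-\eta]\times[0,1]$; the versions obtained for different $\eta$ agree almost surely on overlaps (both being modifications of $\BB$), so they patch into a single version continuous on the whole open strip.

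First I would estimate, for $(h_1,t),(h_2,s)\in K_\eta$ with $h_1\le h_2$, the quantity $\EE(\BB_{h_1,t}-\BB_{h_2,s})^2$. By Minkowski's inequality in $L^2(\Omega)$,
\[
\big(\EE(\BB_{h_1,t}-\BB_{h_2,s})^2\big)^{1/2}\le \big(\EE(\BB_{h_1,t}-\BB_{h_1,s})^2\big)^{1/2}+\big(\EE(\BB_{h_1,s}-\BB_{h_2,s})^2\big)^{1/2}.
\]
The first term is a pure $t$-increment at fixed Hurst parameter; since $\BB_{h_1,\cdot}$ is an fBm with covariance $R_{h_1}$, it equals $|t-s|^{h_1}$. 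The second term is a pure $h$-increment at fixed $s$, to which I apply Theorem~\ref{prop:varInc1dfBf} with $\xi=\delta_s\in W^*_+$ (invoking the theorem with a parameter slightly smaller than $\eta$, so that $[\eta,1/2-\eta]$ lies inside the required open interval), together with the bound $\|\delta_s\|_{H_{h_1}^*}\le C_{h_1}$ recalled just before the statement; this yields a bound of order $C_{h_1}\,(h_2-h_1)L(h_2-h_1)$.

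Next I would make the two estimates uniform over $K_\eta$ and compare them to the Euclidean distance $\rho=((h_2-h_1)^2+(t-s)^2)^{1/2}$. On $K_\eta$ one has $|t-s|^{h_1}\le|t-s|^{\eta}\le\rho^{\eta}$, and I set $C_\eta:=\sup_{h\in[\eta,1/2-\eta]}C_h$. Since $x\mapsto xL(x)$ is nondecreasing on $(0,1)$, the $h$-increment is at most $\sqrt{M_\eta}\,C_\eta\,\rho L(\rho)$. Because $\eta<1$, the term $\rho^{\eta}$ dominates $\rho L(\rho)$ for small $\rho$, so one obtains a constant $A_\eta$ with $\EE(\BB_{h_1,t}-\BB_{h_2,s})^2\le A_\eta\,\rho^{2\eta}$ on $K_\eta$ (for $\rho$ bounded away from $0$ the left side is bounded, so the inequality extends to all pairs after inflating the constant). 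As the field is Gaussian, $\EE|\BB_{h_1,t}-\BB_{h_2,s}|^{2p}=c_{2p}\big(\EE(\BB_{h_1,t}-\BB_{h_2,s})^2\big)^{p}\le c_{2p}A_\eta^{p}\,\rho^{2p\eta}$; choosing $p$ with $2p\eta>2$ (the dimension of the index set) lets Kolmogorov--Chentsov deliver a locally H\"older-continuous version on $K_\eta$, and patching over $\eta\downarrow0$ concludes.

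The only genuinely nonroutine point, and hence the main obstacle, is the uniform control $C_\eta=\sup_{h\in[\eta,1/2-\eta]}C_h<\infty$: one must verify that the norm of the canonical injection $W^*\to H_h^*$ stays bounded on compact $h$-intervals, which may require the integral estimates on $K_h$ from Appendix~\ref{App:borneKh}. Once this and the (harmless) absorption of the logarithmic factor $L$ are settled, the continuity itself is a standard Gaussian consequence. I would also remark that feeding the same metric estimate into Dudley's entropy integral, rather than Kolmogorov's criterion, gives an alternative route and foreshadows the sharper Theorem~\ref{th:continuity}.
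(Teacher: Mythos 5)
Your proposal is correct and follows essentially the same route as the paper: bound the $h$-increment at fixed time by applying Theorem~\ref{prop:varInc1dfBf} to $\xi=\delta_t\in W^*_+$ with $\|\delta_t\|_{H_{h}^*}\le C_{h}\|\delta_t\|_{W^*}$, bound the $t$-increment by $|t-s|^{2h}\le|t-s|^{2\eta}$ using the fBm covariance, and conclude by Kolmogorov's criterion on each compact $[\eta,1/2-\eta]\times[0,1]$. You are in fact slightly more careful than the paper on one point it leaves implicit, namely the uniform boundedness of the injection norms $C_h$ over the compact $h$-interval.
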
 
 
\begin{proof}
For any $t\in [0,1]$, any $h_1,h_2\in(0,1/2]$,
\begin{equation*}
\EE\left(\BB_{h_1,t} - \BB_{h_2,t}\right)^2 = \EE\left(\BB_{h_1,\delta_t} - \BB_{h_2,\delta_t}\right)^2\ .
\end{equation*}
Proposition \ref{prop:varInc1dfBf} then implies that for any $\eta>0$, there is a constant $M_{\eta}$ such that for all $h_1<h_2\in [\eta,1/2-\eta]$,  and all $t\in[0,1]$,
\begin{align*}
\int_0^1 \left(K_{h_1}^*\delta_t(u) - K_{h_2}^*\delta_t(u)\right)^2 \ {\rm d}u &\leq M_{\eta}\ (h_2-h_1)^2 L(h_2-h_1)^2 \|\delta_t\|_{W^*} \\
&\leq M_{\eta}\ C_{h_1}\ (h_2-h_1)^2 L(h_2-h_1)^2\ .
\end{align*}
It follows that for all $s,t\in[0,1]$,
\begin{align*}
\EE\left(\BB_{h_1,s} - \BB_{h_2,t}\right)^2 &\leq 2\EE\left(\BB_{h_1,s} - \BB_{h_2,s}\right)^2 + 2\EE\left(\BB_{h_2,s} - \BB_{h_2,t}\right)^2 \\ 
&\leq 2 M_{\eta}\ C_{h_1}\ (h_2-h_1)^2 L(h_2-h_1)^2 + 2 |t-s|^{2h_2} \\
&\leq 2 M_{\eta}\ C_{h_1}\ (h_2-h_1)^2 L(h_2-h_1)^2 + 2 |t-s|^{2\eta} .
\end{align*}
The Kolmogorov continuity theorem allows to conclude that the fBf admits a continuous version on $[\eta,1/2-\eta]\times [0,1]$, for any $\eta>0$. The result is proved on $(0,1/2)\times [0,1]$.
\end{proof}

\begin{remark}
Working on $K_h$ for $h\geq 1/2$, we could in fact prove that the fBf has a continuous modification on $(0,1)\times[0,1]$. See for instance \cite{DecreusefondUstunel} where it is proved that $K_h, h\geq 1/2$, has an analytic extension to $(0,1)$.
\end{remark}

\subsection{An application to the mild solutions of a family of stochastic partial differential equations}\label{subsec:spde}

In this section, we suggest an application of the previous results to the solutions of a class of stochastic partial differential equations with additive fractional noises. Our aim is not to solve them explicitly, but rather to prove the $L^2$ continuity of the solutions when the regularity of the (anisotropic) noise varies. The exposition is made on $[0,1]^2$, but extends easily to higher dimensions.

\noindent It was proved in \cite{Carmona} that the tensor product of two abstract Wiener spaces is an abstract Wiener space. This means that for $h_1,h_2\in (0,1)$, $(H_{h_1}\bar{\otimes} H_{h_2}, W\bar{\otimes}_{\varepsilon} W, \mathcal{W}_{h_1}\otimes\mathcal{W}_{h_2})$ is an abstract Wiener space, where $H_{h_1}\bar{\otimes} H_{h_2}$ is the completion of the algebraic tensor product $H_{h_1}\otimes H_{h_2}$ with respect to the norm given by the scalar product: $\forall x_1,x_1'\in H_{h_1}, x_2,x_2'\in H_{h_2}$, $ (x_1\otimes x_2, x_1'\otimes x_2') = (x_1,x_1')_{H_{h_1}} (x_2,x_2')_{H_{h_2}}$; and where $W\bar{\otimes}_{\varepsilon} W$ is the completion of $W\otimes W$ with respect to the norm given by: $\|x\|_{\varepsilon} = \sup\{|x_1^*\otimes x_2^*(x)|: \|x_1^*\|_{W^*}=1, \|x_2^*\|_{W^*}=1\}$. Note that $W\bar{\otimes}_{\varepsilon} W$ is in fact $C_0([0,1]^2)$ with the sup-norm topology (\cite{Ryan} gives a detailed account on topological tensor products), the space of continuous functions vanishing on the axes (this is an application of the Stone-Weierstrass theorem). The canonical operator for this new Wiener space is the tensor operator $R_{h_1}\otimes R_{h_2}$ (in fact, its continuous extension, but we keep the same notation for operators). Let $\mathbb{W}^{h_1,h_2}$ be the white noise associated to the tensor Wiener space. Then, for $(s,t)$ and $(s',t')\in [0,1]^2$,
\begin{align*}
\EE\bigg(\int_{W\bar{\otimes}_\varepsilon W} \langle \delta_{s}\otimes \delta_{t}, w\rangle \ {\rm d}\mathbb{W}^{h_1,h_2}_{w}\ &\int_{W\bar{\otimes}_\varepsilon W} \langle \delta_{s'}\otimes \delta_{t'}, w\rangle \ {\rm d}\mathbb{W}^{h_1,h_2}_{w}\bigg) \\
&= \left(R_{h_1}\otimes R_{h_2}(\delta_{s}\otimes \delta_{t}),R_{h_1}\otimes R_{h_2}(\delta_{s'}\otimes \delta_{t'})\right)_{H_{h_1}\otimes H_{h_2}} \\
&= \left(R_{h_1} \delta_{s},R_{h_1} \delta_{s'}\right)_{H_{h_1}} \ \left(R_{h_2} \delta_{t},R_{h_2} \delta_{t'}\right)_{H_{h_2}} \\
&= R_{h_1}(s,s')\ R_{h_2}(t,t') \ .
\end{align*}
The last expression is the covariance of an anisotropic $(h_1,h_2)$-fractional Brownian sheet (see \cite{Xiao}). Thus, we shall also denote by $\{\mathbb{W}^{h_1,h_2}_{s,t}, \ (s,t)\in [0,1]^2 \}$ this process, and write it as:
\begin{equation}\label{eq:rep1}
\left\{\int_{[0,1]^2} K_{1/2}^*\otimes K_{1/2}^*(\delta_s\otimes \delta_t)(u,v) \ {\rm d}\mathbb{W}_{u,v}^{h_1,h_2} ,\ (s,t)\in[0,1]^2 \right\}\ .
\end{equation}
Similarly, the process
\begin{equation}\label{eq:rep2}
\left\{\int_{[0,1]^2} K_{h_1}^*\otimes K_{h_2}^*(\delta_s \otimes \delta_t)(u,v) \ {\rm d}\mathbb{W}_{u,v} ,\ (s,t)\in[0,1]^2 \right\}\ , 
\end{equation}
where $\mathbb{W}_{u,v}$ is the standard Brownian sheet of $[0,1]^2$ (corresponding to $h_1=h_2=1/2$), is equal in distribution to the two aforementioned processes. This construction still holds with $\xi=\xi_1\otimes\xi_2$ in the space $\textrm{Span}\{\delta_s,s\in[0,1]\}\otimes \textrm{Span}\{\delta_s,s\in[0,1]\}$ and to its completion with respect to the norm $\|K_{h_1}^*\otimes K_{h_2}^*(\xi)\|_{L^2}$, that we denote by $V_{h_1,h_2}$. This corresponds to the standard construction of the Wiener integral with step functions. The image of $V_{h_1,h_2}$ by ${K_{h_1}^*\otimes K_{h_2}^*}$ is denoted by $\mathcal{D}_{h_1,h_2}$ and is the space of integrands of the $(h_1,h_2)$-fractional Brownian sheet. Besides, the processes in (\ref{eq:rep1}) and (\ref{eq:rep2}) extended to $\mathcal{D}_{h_1,h_2}$, are equal in law. We note that the space of integrands of the fractional Brownian sheet on $\R^2$ is partly described in \cite{Leonenko2011}, so that on $[0,1]^2$, we will consider $\tilde{\mathcal{D}}_{h_1,h_2}$ which consists of square integrable functions $\phi$ with support in $[0,1]^2$, for which there is an extension $\tilde{\phi}\in L^2(\R^2)$ with the same support and such that:
\begin{equation*}
\int_{\R^2} |\mathcal{F}\tilde{\phi}(\lambda_1,\lambda_2)|^2 |\lambda_1|^{1-2h_1} |\lambda_2|^{1-2h_2}\ {\rm d}\lambda_1 {\rm d}\lambda_2<\infty \ ,
\end{equation*}
where $\mathcal{F}$ is the Fourier transform. We note that $\tilde{\mathcal{D}}_{h_1,h_2} \subseteq \mathcal{D}_{h_1,h_2}$ and that the equality is not established (in the one-dimensional case, this requires some care, \cite{Jolis}).

This discussion shows that the generalised processes defined by (\ref{eq:rep1}) and (\ref{eq:rep2}) extend to $V_{h_1,h_2}$ and are equal, which can be written as:
\begin{align}\label{eq:equivFracNoise}
\textrm{for any } \xi\in V_{h_1,h_2}, \quad \dot{\mathbb{W}}^{h_1,h_2}\left(K_{1/2}^*\otimes K_{1/2}^* (\xi)\right) \overset{(d)}{=} \dot{\mathbb{W}}\left(K_{h_1}^*\otimes K_{h_2}^* (\xi)\right) \ .
\end{align}

\

Consider now the following family of elliptic SPDEs with additive noise, on a bounded open domain $U\subset [0,1]^2$ with smooth boundary:
\begin{equation}
\Delta u  = \dot{\mathbb{W}}^{h_1,h_2} \quad \textrm{ on } U, \tag{$\mathcal{L}_{h_1,h_2}$}
\end{equation}
and with the condition that $u=0$ on $\partial U$. It is assumed that all fractional noises below come from a unique white noise $\mathbb{W}$, i.e. that they can be written as in the right-hand term of (\ref{eq:equivFracNoise}). Let $\tilde{\mathcal{D}}_{h_1,h_2}(U)$ be the restriction of $\tilde{\mathcal{D}}_{h_1,h_2}$ to functions with support in $U$. Let $G_U$ be the Green function associated to this problem, which is known to be locally integrable and gives a fundamental solution to the Poisson problem on $U$, for any $\varphi\in L^2(U)$:
\begin{equation*}
\Delta (G_U\ast \varphi) = \varphi, \ \textrm{ where } \  G_U\ast \varphi(x,y) = \int_U G_U\left((x,y),(s,t)\right) \varphi(s,t) \ {\rm d}s {\rm d}t\ , (x,y)\in U\ .
\end{equation*}
This type of equation with the Brownian sheet has already been considered in \cite{NualartTindel} (with a reflection term), and $u$ is a distributional (or mild, as studied in \cite{Dalang}) solution to this problem if it acts on functions $\phi\in C_c^\infty(U)$ in the following way:
\begin{equation*}
\langle u,\Delta \phi\rangle = \int_{[0,1]^2} \phi(x,y)\ {\rm d}\mathbb{W}^{h_1,h_2}_{x,y} \ .
\end{equation*}

\noindent The last integral is a well-defined Wiener integral, since $C_c^\infty(U) \subset \tilde{\mathcal{D}}_{h_1,h_2}(U)$ when $h_1\leq 1/2$ and ${h_2\leq 1/2}$ (which will be assumed from now). Plugging the fundamental solution into the previous equation yields, for $\varphi \in C_c^\infty(U)$:
\begin{equation}\label{eq:mild}
\langle u,\varphi\rangle = \int_{[0,1]^2} G_U\ast \varphi (x,y)\ {\rm d}\mathbb{W}^{h_1,h_2}_{x,y} \ .
\end{equation}

\noindent For this last expression to make sense, we need $G_U\ast \varphi$ to be in $\tilde{\mathcal{D}}_{h_1,h_2}(U)$. This is the case if ${\varphi\in C_c^\infty(U)}$, and $u$ can more generally be defined for any $\varphi$ such that $G_U\ast \varphi \in \tilde{\mathcal{D}}_{h_1,h_2}(U)$. It follows from the definition of $\mathcal{D}_{h_1,h_2}(U)$ and (\ref{eq:equivFracNoise}) that if $G_U\ast \varphi \in \tilde{\mathcal{D}}_{h_1,h_2}(U)$, there exists ${\xi_{\varphi} \in V_{h_1,h_2} (\subseteq C_0([0,1]^2)^*)}$ such that $K_{1/2}^*\otimes K_{1/2}^*(\xi_{\varphi}) = G_U\ast \varphi$. We can now state the following regularity result:
\begin{proposition}
Let $\eta\in(0,1/4)$. Let $(h_1,h_2)$ and $(h_1',h_2')$ be in $(\eta,1/2-\eta)^2$ such that $h_1\leq h_1'$ and $h_2\leq h_2'$, and let $u_{(h_1,h_2)}$ and $u_{(h_1',h_2')}$ be the mild solutions to $(\mathcal{L}_{h_1,h_2})$ and $(\mathcal{L}_{h'_1,h'_2})$ respectively. Then, for all $\varphi$ such that $G_U\ast \varphi \in \tilde{\mathcal{D}}_{h_1,h_2}(U) \cap \tilde{\mathcal{D}}_{h'_1,h'_2}(U)$,
\begin{align*}
\EE\left(u_{(h_1,h_2)}(\varphi) - u_{(h_1',h_2')}(\varphi)\right)^2 \leq M_{\eta}\ (h_1-h_1')^2 L(h_1&-h_1')^2 \|\xi_{\varphi}\|_{H_{h_1}^*\bar{\otimes} H_{h_2}^*}^2 \\
&+ M_{\eta} \ (h_2-h_2')^2 L(h_2-h_2')^2 \|\xi_{\varphi}\|_{H_{h_1}^*\bar{\otimes} H_{h_2}^*}^2
\end{align*}
\end{proposition}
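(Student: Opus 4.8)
The plan is to reduce everything to the one-dimensional increment bound of Theorem \ref{prop:varInc1dfBf} through the tensor structure of the fractional Brownian sheet. First I would use the mild-solution formula (\ref{eq:mild}) together with the identity $K_{1/2}^*\otimes K_{1/2}^*(\xi_\varphi)=G_U\ast\varphi$ to write $u_{(h_1,h_2)}(\varphi)=\dot{\mathbb{W}}^{h_1,h_2}\big(K_{1/2}^*\otimes K_{1/2}^*(\xi_\varphi)\big)$, and then invoke the equivalence (\ref{eq:equivFracNoise}) to realise this, on the common white noise $\mathbb{W}$, as $\dot{\mathbb{W}}\big(K_{h_1}^*\otimes K_{h_2}^*(\xi_\varphi)\big)$. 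The hypothesis $G_U\ast\varphi\in\tilde{\mathcal{D}}_{h_1,h_2}(U)\cap\tilde{\mathcal{D}}_{h_1',h_2'}(U)$ guarantees that the \emph{same} $\xi_\varphi$ (determined by $K_{1/2}^*\otimes K_{1/2}^*(\xi_\varphi)=G_U\ast\varphi$, independently of the Hurst parameters) serves both pairs, so both solutions are driven by one Gaussian integral and their difference is again a Wiener integral against $\mathbb{W}$. Since $\mathbb{W}$ is the standard Brownian sheet (control measure Lebesgue on $[0,1]^2$), and since $V_{h_1,h_2}$ carries by construction exactly the norm $\|\cdot\|_{H_{h_1}^*\bar{\otimes}H_{h_2}^*}$ (so the right-hand side is finite), the isometry property of the Wiener integral yields
\[
\EE\left(u_{(h_1,h_2)}(\varphi)-u_{(h_1',h_2')}(\varphi)\right)^2=\left\|K_{h_1}^*\otimes K_{h_2}^*(\xi_\varphi)-K_{h_1'}^*\otimes K_{h_2'}^*(\xi_\varphi)\right\|_{L^2([0,1]^2)}^2.
\]

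Next I would split the operator difference by the telescoping identity $K_{h_1}^*\otimes K_{h_2}^*-K_{h_1'}^*\otimes K_{h_2'}^*=(K_{h_1}^*-K_{h_1'}^*)\otimes K_{h_2}^*+K_{h_1'}^*\otimes(K_{h_2}^*-K_{h_2'}^*)$, and apply $(a+b)^2\le 2a^2+2b^2$ (the factor $2$ absorbed into $M_\eta$). Each term is then estimated through the factorisation of the Hilbert tensor-product operator norm, $\|A\otimes B\|=\|A\|\,\|B\|$. The key structural fact is that $K_h^*$ is isometric from $H_h^*$ into $L^2([0,1])$, i.e. $\|K_h^*\xi\|_{L^2}=\|\xi\|_{H_h^*}$ --- this is exactly the identity $(R_h\eta,R_h\nu)_{H_h}=(K_h^*\eta,K_h^*\nu)_{L^2}$ established in the proof of Theorem \ref{th:structOp} --- so that $\|K_{h_2}^*\|_{H_{h_2}^*\to L^2}=1$; meanwhile Theorem \ref{prop:varInc1dfBf} is precisely the operator bound $\|K_{h_1}^*-K_{h_1'}^*\|_{H_{h_1}^*\to L^2}\le\sqrt{M_\eta}\,(h_1'-h_1)L(h_1'-h_1)$, via $\EE(\BB_{h_1,\xi}-\BB_{h_1',\xi})^2=\|K_{h_1}^*\xi-K_{h_1'}^*\xi\|_{L^2}^2$. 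The first term is therefore bounded by $M_\eta\,(h_1-h_1')^2L(h_1-h_1')^2\,\|\xi_\varphi\|_{H_{h_1}^*\bar{\otimes}H_{h_2}^*}^2$, exactly as required.

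The hard part will be the mixed term $K_{h_1'}^*\otimes(K_{h_2}^*-K_{h_2'}^*)$: here the natural isometric factor is $K_{h_1'}^*$, which lives on $H_{h_1'}^*$ rather than on the reference space $H_{h_1}^*$ occurring in the statement. The same factorisation gives the clean bound $M_\eta\,(h_2-h_2')^2L(h_2-h_2')^2\,\|\xi_\varphi\|_{H_{h_1'}^*\bar{\otimes}H_{h_2}^*}^2$, and the remaining point is to replace the first factor $H_{h_1'}^*$ by $H_{h_1}^*$. This amounts to controlling $K_{h_1'}^*(K_{h_1}^*)^{-1}$ on $L^2$, equivalently the ratio $\|\xi\|_{H_{h_1'}^*}/\|\xi\|_{H_{h_1}^*}$ for $h_1\le h_1'$. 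I expect this to be the only non-routine step. It can be handled either by the monotonicity $\|\xi\|_{H_{h'}^*}\le\|\xi\|_{H_{h}^*}$ for $h\le h'$ on $[0,1]$ (readily verified on point masses and on increments, where it reduces to $r\le 1\Rightarrow r^{2h'}\le r^{2h}$), or, more robustly, by the uniform boundedness of $\|K_{h'}^*(K_h^*)^{-1}\|$ over the compact window $[\eta,1/2-\eta]$ stemming from the analytic dependence of $K_h$ in $h$ (cf. \cite{DecreusefondUstunel}), the resulting constant being absorbed into $M_\eta$. With this comparison in hand the mixed term is bounded by $M_\eta\,(h_2-h_2')^2L(h_2-h_2')^2\,\|\xi_\varphi\|_{H_{h_1}^*\bar{\otimes}H_{h_2}^*}^2$, and summing the two contributions yields the claim.
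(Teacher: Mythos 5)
Your argument is correct, and its skeleton coincides with the paper's: the same reduction of both mild solutions to Wiener integrals against the common noise $\mathbb{W}$ via (\ref{eq:equivFracNoise}) and (\ref{eq:mild}), the same isometric identification of the expectation with $\|K_{h_1}^*\otimes K_{h_2}^*(\xi_\varphi)-K_{h_1'}^*\otimes K_{h_2'}^*(\xi_\varphi)\|_{L^2([0,1]^2)}^2$, the same telescoping, and the same appeal to Theorem \ref{prop:varInc1dfBf}. Where you genuinely diverge is in how the one-dimensional bound is lifted to the tensor setting: the paper approximates $\xi_\varphi$ by elementary tensors $\sum_k\xi_k\otimes\xi_k'$, orthogonalises the factors, and sums termwise before passing to the limit, whereas you invoke the Hilbert-space identity $\|A\otimes B\|=\|A\|\,\|B\|$ together with the isometry $\|K_h^*\xi\|_{L^2}=\|\xi\|_{H_h^*}$, which bypasses the density and orthogonalisation step. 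A merit of your route is that it makes explicit a point both proofs must confront: the cross term is naturally controlled by a tensor norm built on the ``wrong'' Hurst parameter ($H_{h_1'}^*$ in your splitting, $H_{h_2'}^*$ in the paper's second term, which the paper treats only implicitly), so a comparison of dual norms across the window is required. Your fallback for that comparison --- uniform boundedness of the ratio $\|\xi\|_{H_{h'}^*}/\|\xi\|_{H_{h}^*}$ for $h\le h'$ over $[\eta,1/2-\eta]$, with the constant absorbed into $M_\eta$ --- is precisely what the paper provides in the closing remark of the proof of Lemma \ref{lem:app2} (combined with the positive/negative decomposition used in Theorem \ref{prop:varInc1dfBf} to pass from $W_+^*$ to general functionals). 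Your first suggestion, that the exact monotonicity $\|\xi\|_{H_{h'}^*}\le\|\xi\|_{H_h^*}$ is ``readily verified on point masses and increments,'' is not a proof, since the dual norm is not determined by its values on those functionals; rely on the uniform-boundedness argument instead. With that caveat, and the routine density extension of the operator bound from $W^*$ to $H_{h_1}^*$, your proof is complete.
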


\begin{proof}
Recall first that $\tilde{\mathcal{D}}_{h_1,h_2}(U) \cap \tilde{\mathcal{D}}_{h'_1,h'_2}(U)$ is not empty since it contains $C_c^\infty(U)$, hence let $\varphi$ be such that $G_U\ast \varphi \in \tilde{\mathcal{D}}_{h_1,h_2}(U) \cap \tilde{\mathcal{D}}_{h'_1,h'_2}(U)$. Let $\xi_{\varphi}$ be such that $K_{1/2}^*\otimes K_{1/2}^*(\xi_\varphi) = G_U\ast\varphi$. According to Equations (\ref{eq:equivFracNoise}) and (\ref{eq:mild}), a mild solution of $(\mathcal{L}_{h_1,h_2})$ can be expressed as \[\langle u_{(h_1,h_2)}, \varphi\rangle  = {\int_{[0,1]^2} K_{h_1}^*\otimes K_{h_2}^*(\xi_\varphi)(x,y)\ {\rm d}\mathbb{W}_{x,y}}.\] Thus, the above expectation is in fact the $L^2([0,1]^2)$-norm of $K_{h_1}^*\otimes K_{h_2}^* (\xi_{\varphi}) - K_{h_1'}^*\otimes K_{h_2'}^* (\xi_{\varphi})$. As $\xi_\varphi$ may not have a tensorized form, we express it as the limit of elements of the form: $\sum_{k=1}^n \xi_k\otimes \xi_k' \in C_0([0,1])^*\otimes C_0([0,1])^*$. This reads:
\begin{align*}
\bigg\| K_{h_1}^*\otimes K_{h_2}^*&\left(\sum_{k=1}^n \xi_k\otimes \xi_k'\right) - K_{h'_1}^*\otimes K_{h'_2}^*\left(\sum_{k=1}^n \xi_k\otimes \xi_k'\right) \bigg\|_{L^2([0,1]^2)}^2\\
&\leq 2 \bigg\| K_{h_1}^*\otimes (K_{h_2}^* - K_{h'_2}^*)\left(\sum_{k=1}^n \xi_k\otimes \xi_k'\right)\bigg\|_{L^2}^2
+ 2 \bigg\| K_{h'_2}^*\otimes (K_{h_1}^* - K_{h'_1}^*)\left(\sum_{k=1}^n \xi_k\otimes \xi_k'\right)\bigg\|_{L^2}^2 \ ,
\end{align*}
and up to an orthogonalisation procedure, we can assume that the $\xi_1,\dots, \xi_n$ are orthogonal in $H_{h_1}^*$ (i.e. that $(K_{h_1}^*\xi_i, K_{h_1}^*\xi_j)_{L^2} = \|\xi_i\|_{H_{h_1}^*} \|\xi_j\|_{H_{h_1}^*}\delta_{ij}$) and that the $\xi'_1,\dots, \xi'_n$ are orthogonal in $H_{h'_2}^*$. Then, the tensor product on $L^2([0,1]^2)$ implies that the first term in the above sum decomposes as:
\begin{equation*}
\sum_{k=1}^n \|K_{h_1}^* \xi_k\|_{L^2[0,1]}^2 \ \|(K_{h_2}^*-K_{h'_2}^*)\xi_{k}'\|_{L^2[0,1]}^2 \ ,
\end{equation*}
which is now smaller than:
\begin{equation*}
M_\eta (h_2-h'_2)^2 L(h_2-h'_2)^2 \|K_{h_1}(\cdot,\cdot)\|_{L^2([0,1]^2)} \sum_{k=1}^n \|\xi_k\|_{H_{h_1}^*}^2\ \|\xi_k'\|_{H_{h_2}^*}^2 \ ,
\end{equation*}
using Theorem \ref{prop:varInc1dfBf}. The last sum is exactly $\|\sum_{k=1}^n \xi_k\otimes \xi_k'\|_{H_{h_1}^*\bar{\otimes} H_{h_2}^*}^2$,  which is the result of the Proposition for elements of the algebraic tensor product. So by a density argument, this gives the result for $\xi_\varphi$.
\end{proof}

\subsection{The fractional Brownian field over \texorpdfstring{$\boldsymbol{L^2}$}{L^2}}

The $L^2$-fBf, with a proper family of isometries defined in section \ref{subsec:extended}, is now looked at. A slightly better estimate is attained on the $h$-increments than on the previous results of this section, due to the different underlying structure of the process. In particular, the result of this section would not permit to obtain the previous estimate on solutions of SPDEs.

\begin{theorem}\label{th:regH}
Let $\BB$ be a fBf on $(0,1/2]\times L^2(T,m)$. For any $\eta \in (0,1/4)$ and any compact subset $D$ of $L^2$, there exists a constant $C_{\eta,D}>0$ such that for any $f\in D$, and any $h_1,h_2\in [\eta,1/2-\eta]$,
\begin{equation*}
\EE \left((\BB_{h_1,f}-\BB_{h_2,f})^2\right) \leq C_{\eta,D}\ (h_2-h_1)^2 \ .
\end{equation*}
\end{theorem}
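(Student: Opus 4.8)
The plan is to reduce the statement to a Lipschitz estimate in the Hurst variable for the single $L^2[0,1]$-valued curve underlying the field, and to extract that estimate from the analytic dependence of the whole construction on $h$. Write $a_h(f):=K_h^{-1}u_h^{-1}k_h(\cdot,f)\in L^2[0,1]$. By (\ref{eq:VarL2fBf}) with $g=f$ one has $\EE(\BB_{h_1,f}\BB_{h_2,f})=(a_{h_1}(f),a_{h_2}(f))_{L^2[0,1]}$, hence
\begin{equation*}
\EE\big((\BB_{h_1,f}-\BB_{h_2,f})^2\big)=\|a_{h_1}(f)-a_{h_2}(f)\|_{L^2[0,1]}^2 .
\end{equation*}
Since $K_h^{-1}$ and $u_h^{-1}$ are isometries and $k_h(\cdot,f)\in H(k_h)$, one gets $\|a_h(f)\|_{L^2}^2=k_h(f,f)=m(f^2)^{2h}$, already bounded uniformly on the compact set $D$. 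It therefore suffices to show that $h\mapsto a_h(f)$ is Lipschitz from $[\eta,1/2-\eta]$ to $L^2[0,1]$ with a constant uniform over $f\in D$: if $\|\partial_h a_h(f)\|_{L^2}\le C_{\eta,D}^{1/2}$, then $\|a_{h_1}(f)-a_{h_2}(f)\|_{L^2}\le C_{\eta,D}^{1/2}(h_2-h_1)$, and squaring gives the theorem.

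To obtain the derivative bound I would work in the orthonormal systems used to build $u_h$. Expanding, $a_h(f)=\sum_{n}\gamma_n^{(h)}(f)\,\psi_n^h$, where $\psi_n^h:=K_h^{-1}\underline{R}_h(\cdot,t_n)/\|\underline{R}_h(\cdot,t_n)\|_{H_h}$ is an orthonormal system of $L^2[0,1]$ (recall $K_h^{-1}R_h(\cdot,t_n)=K_h^*\delta_{t_n}$, so these are the Gram--Schmidt functions of $\{K_h(t_n,\cdot)\}_n$), and $\gamma_n^{(h)}(f)=\big(k_h(\cdot,f),\underline{k}_h(\cdot,f_n)/\|\underline{k}_h(\cdot,f_n)\|\big)_{H(k_h)}$. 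Every ingredient is analytic in $h$: the Gram entries $R_h(t_i,t_j)$ and $k_h(f_i,f_j)$ are finite combinations of powers $a^{2h}=e^{2h\log a}$, hence analytic; on the compact interval $[\eta,1/2-\eta]$ the associated Gram matrices remain positive definite with determinant bounded away from zero, so the Gram--Schmidt data, and thus $u_h$, depend analytically on $h$; and $K_h$ (hence $K_h^{-1}$) extends analytically in $h$, by \cite{DecreusefondUstunel} and the remark following Corollary \ref{Prop:ContfbfR}. Consequently $h\mapsto a_h(f)$ is real-analytic as an $L^2[0,1]$-valued map and extends holomorphically to a fixed complex neighbourhood of $[\eta,1/2-\eta]$.

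I would then convert analyticity into the uniform derivative bound by a Cauchy estimate. Compactness of $D$ gives $\sup_{f\in D}m(f^2)<\infty$; moreover $f\mapsto k_h(\cdot,f)$ is $2h$-H\"older with constant one, since $\|k_h(\cdot,f)-k_h(\cdot,g)\|_{H(k_h)}=m(|f-g|^2)^{h}$, so $\{k_h(\cdot,f):f\in D\}$ is totally bounded in $H(k_h)$ and the tails $\sum_{n>N}\gamma_n^{(h)}(f)^2$ are small uniformly in $f\in D$ and $h$. This makes the holomorphic extension $z\mapsto a_z(f)$ uniformly bounded on $D$ times a fixed neighbourhood of $[\eta,1/2-\eta]$, whence Cauchy's integral formula on a circle of fixed radius bounds $\sup_{f\in D,\,h\in[\eta,1/2-\eta]}\|\partial_h a_h(f)\|_{L^2}$ by a finite constant $C_{\eta,D}^{1/2}$. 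Integrating as in the first step completes the proof.

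The main obstacle is the simultaneous analytic, and uniformly bounded, control of $a_h(f)$ through the two $h$-dependent infinite-dimensional objects it combines: the isometry $u_h$, defined by an infinite Gram--Schmidt procedure, and the fractional operator $K_h^{-1}$. The delicate point is to make the holomorphic extension genuinely uniform over the compact set $D$ and over the truncation level $N$, so that the Cauchy estimate survives the passage $N\to\infty$. This is also what accounts for the disappearance of the logarithmic factor $L(h_2-h_1)$ present in Theorem \ref{prop:varInc1dfBf}: here the relevant functionals are the smooth elements $k_h(\cdot,f)$ rather than the rough Dirac masses, so genuine differentiability in $h$, and not merely the modulus $(h_2-h_1)L(h_2-h_1)$, is available.
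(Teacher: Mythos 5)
Your proposal follows essentially the same route as the paper's proof: both start from the variance formula (\ref{eq:VarL2fBf}), establish analyticity in $h$ of all the Gram--Schmidt data ($\underline{k}_h(f,f_n)$, the coefficients of $\underline{K}_h(t_n,\cdot)$, and the cross inner products $\int_0^1 K_h(t,r)K_{h'}(s,r)\,{\rm d}r$), control the tails $\sum_{n>N}\underline{k}_h(f,f_n)^2$ uniformly over $f\in D$ and $h\in[\eta,1/2-\eta]$ via compactness of $D$ and the identity $\|k_h(\cdot,f)-k_h(\cdot,g)\|_{\mathcal{H}_h}=\|f-g\|_{L^2}^{2h}$, and convert analyticity into a uniform quantitative bound by a Cauchy-estimate argument. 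The only difference is cosmetic: you bound the first derivative of the $L^2[0,1]$-valued curve $h\mapsto a_h(f)$ and square, whereas the paper expands the scalar $u(h,h')=\|a_h(f)-a_{h'}(f)\|_{L^2}^2$ to second order after checking $u'(h_1,h_1)=0$ --- two equivalent ways of extracting the quadratic bound.
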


\begin{proof}
This proof is divided into two parts. In the first part, we show that for any $n\in \N$, for any $f\in L^2(m)$, $h\mapsto \underline{k}_h(f,f_n)$ is analytic. This will be needed in the rest of the proof, while in the second part we compute the main estimates. Like $\underline{R}_h$, $\underline{k}_h$ is the Gram-Schmidt transform of $k_h$ : for any $f\in L^2$, $\underline{k}_h(f,f_0) = k_h(f,f_0) \textrm{ and } \forall n\geq 1$,
\begin{align}
\underline{k}_h(f,f_n) &= k_h(f,f_n) - \sum_{j=0}^{n-1}\frac{\left( \underline{k}_h(\cdot,f_j), k_h(\cdot,f_n)\right)_{\mathcal{H}_h}}{\|\underline{k}_h(\cdot,f_j) \|_{\mathcal{H}_h}^2} \underline{k}_h(f,f_j) \label{eq:decompkh1}\\
&= k_h(f,f_n) + \sum_{j=0}^{n-1} \left(\sum_{l=j}^{n-1} \alpha_h(f_n,j,l) \right) k_h(f,f_j) \ , \label{eq:decompkh2}
\end{align}
where the coefficients $\alpha_h(f_n,j,l)$ correspond to the inverse Gram-Schmidt transform. Note that $\alpha_h(f_n,j,l)$ depends on $n$ only through the terms $(\underline{k}_h(\cdot,f_j), k_h(\cdot,f_n))_{\mathcal{H}_h} = \underline{k}_h(f_n,f_j)$, and we define $\alpha_h(g,j,l)$ by an obvious substitution. It is straightforward that for all $f,g\in L^2(m)$, $h\mapsto k_h(f,g)$ is analytic over $h\in(0,1/2)$ (in the sequel we will say, for short, that $k_h(f,g)$ is analytic). Hence, proceding by induction, assume that for all $f,g\in L^2$, $\underline{k}_h(f,f_{n-1})$ and $\alpha_h(g,j,l)$, for $j\leq n-2$ and $j\leq l\leq n-2$, are analytic. We will show that all the terms in (\ref{eq:decompkh1}) are analytic. By the preceding remarks, the choice of $g$ is unimportant since, under the induction hypothesis, $\underline{k}_h(g,f_j)$ is analytic. $k_h(f,f_n)$ is analytic, as was previously stated, so it remains to assess the terms in the sum of (\ref{eq:decompkh1}). For $j\leq n-1$, $(\underline{k}_h(\cdot,f_j), k_h(\cdot,g))_{\mathcal{H}_h} = \underline{k}_h(g,f_j)$, which is analytic by assumption. In particular, this is true for $g=f_n$. Then, decomposing $\underline{k}_h(\cdot,f_{j})$ ($j\leq n-1$) as in (\ref{eq:decompkh2}), $\|\underline{k}_h(\cdot,f_j)\|^2$ is a combination of sums and products of $\alpha_h(g,p,l)$ ($g=f_j$, $p\leq j-1$) and of $k_h(f_i,f_j)$. Hence, it is analytic. The only term left to conclude this induction proof, is $\alpha(g,n-1,n-1)$. The correspondence with (\ref{eq:decompkh1}) indicates that it is equal to $-\underline{k}_h(g,f_{n-1})\ \|\underline{k}_h(\cdot,f_{n-1}) \|_{\mathcal{H}_h}^{-2}$. Again, this is analytic by the induction hypothesis and what we just said on $\|\underline{k}_h(\cdot,f_j)\|^2$.  All this also holds for $R_h$ and the corresponding quantities.

\noindent The analytic property will also be needed for:
\begin{align*}
h'\in (0,1/2] \mapsto  \int_0^1 K_h(t,r) K_{h'}(s,r)\ {\rm d}r \ , 
\end{align*}
for any $h\in(0,1/2], \ s,t\in[0,1]$. In the proof of Lemma 3.1 of \cite{DecreusefondUstunel}, the authors show that for any $s,t\in [0,1]$, $H\in (0,1)\mapsto \int_0^1 K_H(t,r) K_H(s,r)\ {\rm d}r$ is analytic. A direct adaptation of their proof suffices to show what we want.

\vspace{0.2cm}

Turning to the second part of this proof, let $h_1,h_2$ be fixed elements in ${I_{\eta} = [\eta, 1/2-\eta]}$. We recall from (\ref{eq:VarL2fBf}) that: 
\begin{align*}
\EE \left((\BB_{h_1,f}-\BB_{h_2,f})^2\right) &= \int_0^1 \left(K_{h_1}^*R_{h_1}^{-1}u_{h_1}^{-1}k_{h_1}(f,\cdot)-K_{h_2}^*R_{h_2}^{-1}u_{h_2}^{-1}k_{h_2}(f,\cdot)\right)^2(u) \ {\rm d}u \ .
\end{align*}

\noindent From the proof of Lemma \ref{lem:posLF}, we recall that for any $n\in \N$, $\underline{K}_h(t_n,\cdot) = K_h^{-1}\underline{R}_h(t_n,\cdot)$ and that ${\{\underline{K}_h(t_n,\cdot), n\in \N\}}$ is an orthogonal family of $L^2$. The decomposition of $k_h(f,\cdot)$ in $\mathcal{H}_h$ gives:
\begin{align*}
K_h^{-1} u_h^{-1} k_h(f,\cdot) = \sum_{n=0}^{\infty} \underline{k}_h(f,f_n) \frac{\underline{K}_h(t_n,\cdot)}{\|\underline{R}_h(t_n,\cdot)\|_{H_h}} \ ,
\end{align*}
where the equality is in $L^2([0,1])$. By definition of $\underline{K}_h$, $\|\underline{K}_h(t_n,\cdot)\|_{L^2} = \|\underline{R}_h(t_n,\cdot)\|_{H_h}$, so we will drop the last norm in the above formula to consider that ${\{\underline{K}_h(t_n,\cdot), n\in \N\}}$ is an orthonormal family.

\noindent Therefore, 
\begin{equation*}
\EE\left(\BB_{h,f} - \BB_{h',f}\right)^2 = \big\| \sum_{n=0}^{\infty} \underline{k}_h(f,f_n) \underline{K}_h(t_n,\cdot) - \sum_{n=0}^{\infty} \underline{k}_{h'}(f,f_n) \underline{K}_{h'}(t_n,\cdot) \big\|_{L^2}^2 \ .
\end{equation*}
Let us define, for $h,h'$ in ${I_{\eta} = [\eta, 1/2-\eta]}$,
\begin{equation}\label{eq:trunc}
u_N(h,h') = \big\| \sum_{n=0}^N \underline{k}_h(f,f_n) \underline{K}_h(t_n,\cdot) - \sum_{n=0}^N \underline{k}_{h'}(f,f_n) \underline{K}_{h'}(t_n,\cdot) \big\|_{L^2}^2 \ .
\end{equation}
For now, we will assume that this converges uniformly in $h,h'\in I_{\eta}$ and $f\in D$, as $N\rightarrow \infty$. This will be proved in the next paragraph. The limit is denoted by $u(h,h')$ and is the quantity we are interested in. Let us show that $h'\mapsto u_N(h,h')$ is analytic in $h'\in I_{\eta}$, for any $N\in \N$. For this purpose, we rewrite it as:
\begin{align*}
u_N(h,h') = \sum_{n=0}^N \underline{k}_h(f,f_n)^2 + \sum_{n=0}^N \underline{k}_{h'}(f,f_n)^2 - 2\sum_{i=0}^N \sum_{j=0}^N \underline{k}_h(f,f_i) \underline{k}_{h'}(f,f_j) \left(\underline{K}_h(t_i,\cdot),\underline{K}_{h'}(t_j,\cdot)\right)_{L^2} \ .
\end{align*}
The first term is a constant ($N$ and $h$ are fixed), while according to the first part of this proof, the second term is analytic. The coefficients in the linear decomposition of $\underline{K}_{h'}(t_j,\cdot)$ on ${\rm Span}\{K_{h'}(t_l,\cdot), l\leq j\}$ are the one obtained in (\ref{eq:decompkh2}), making the appropriate adaptation to $R_h$. They are also analytic, for the reasons mentioned in the first part, and denoted $\beta_{h'}(j)$, by analogy with the $\alpha_h$'s of the first part. Taking into account the analytic terms $\underline{k}_h(f,f_i) \underline{k}_{h'}(f,f_j)$, we write the double sum in $u_N(h,h')$ in the following way ($\beta$ becomes $\tilde{\beta}$ due to these multiplicative terms):
\begin{align*}
\sum_{i=0}^N \sum_{j=0}^N \tilde{\beta}_h(i) \tilde{\beta}_{h'}(j) \left(K_h(t_i,\cdot),K_{h'}(t_j,\cdot)\right)_{L^2} \ .
\end{align*}
It was proven in the first part that the scalar products are analytic, which finishes to prove our assertion that $u_N$ is analytic in the second variable (and so, in the first variable too when the second is fixed). Now, a standard result on analytic functions states that if a function is the uniform limit on a compact of analytic functions, then it is itself analytic, and the sequence of the derivative functions converges uniformly towards the derivative of the limit (see \cite[p.214]{Rudin}). So, $u(h,h')$ is analytic ($h$ fixed) and its derivative reads:
\begin{align}\label{eq:u'}
u'(h,h') &= \lim_{N\rightarrow \infty} \bigg( 2 \sum_{n=0}^N \underline{k}_{h'}(f,f_n) \underline{k}_{h'}'(f,f_n) - 2\sum_{i=0}^N \sum_{j=0}^N \underline{k}_h(f,f_i) \underline{k}_{h'}'(f,f_j) \left(\underline{K}_h(t_i,\cdot),\underline{K}_{h'}(t_j,\cdot)\right)_{L^2} \nonumber\\
&\quad \quad \quad \quad \quad \quad - 2\sum_{i=0}^N \sum_{j=0}^N \underline{k}_h(f,f_i) \underline{k}_{h'}(f,f_j) \left(\underline{K}_h(t_i,\cdot),\underline{K}_{h'}'(t_j,\cdot)\right)_{L^2} \bigg) \ ,
\end{align}
where the limit is uniform. In fact, it is also uniform in $h$ and $f$, as an adaptation of the proof of \cite{Rudin} (using Cauchy's estimate) shows. The continuity in the first variable of the partial sums $u_N(h,h')$ follows the same line than for the second variable. The continuity in $f\in D$ of these partial sums is obvious from equation (\ref{eq:decompkh2}). As such, a limiting argument implies that $u'(h,h')$ is continuous in both variables and in $f\in D$.

\noindent Hence,
\begin{equation*}
M_u = \sup_{(h,h')\in I_{\eta}^2, f\in D} |u'(h,h')| < \infty \ .
\end{equation*}
We also have that $M^{(2)}_u = \sup_{(h,h')\in I_{\eta}^2, f\in D} |u''(h,h')|$ is finite. For the sake of brevity, we do not develop the proof, which follows by applying the same arguments as we did on the first derivative. Furthermore, we have that $u'(h_1,h_1)=0$. Indeed, the first two terms in (\ref{eq:u'}) annihilates when evaluated at $h_1$, while the last one becomes:
\begin{align*}
\sum_{i\leq j} \underline{k}_{h_1}(f,f_i) \underline{k}_{h_1}(f,f_j) &\left(\left(\underline{K}_{h_1}(t_i,\cdot),\underline{K}_{h_1}'(t_j,\cdot)\right)_{L^2} + \left(\underline{K}_{h_1}(t_j,\cdot),\underline{K}_{h_1}'(t_i,\cdot)\right)_{L^2} \right) \\
&= \sum_{i\leq j} \underline{k}_{h_1}(f,f_i) \underline{k}_{h_1}(f,f_j) \left. \frac{d}{dh} \right|_{h=h_1} \left(\underline{K}_{h}(t_i,\cdot),\underline{K}_{h}(t_j,\cdot)\right)_{L^2} \ ,
\end{align*}
which is zero. Thus, the previous discussion and the mean value theorem applied on the second order Taylor expansion of $u(h_1,h')$ shows that, for $h'\in I_{\eta}$,
\begin{equation}\label{eq:majCut4}
\big\| \sum_{n=0}^\infty \underline{k}_{h_1}(f,f_n) \underline{K}_{h_1}(t_n,\cdot) - \sum_{n=0}^\infty \underline{k}_{h'}(f,f_n) \underline{K}_{h'}(t_n,\cdot) \big\|_{L^2}^2 \leq M^{(2)}_u \ (h_1-h')^2 \ .
\end{equation}

To conclude the proof, it remains to prove the uniform convergence in (\ref{eq:trunc}). We first notice that:
\begin{align*}
\sup_{(h,h')\in I_{\eta}^2, f\in D} | u(h')-u_N(h')| &\leq \sup_{(h,h')\in I_{\eta}^2, f\in D} \big\|\sum_{n=N+1}^\infty \underline{k}_h(f,f_n) \underline{K}_h(t_n,\cdot) - \sum_{n=N+1}^\infty \underline{k}_{h'}(f,f_n) \underline{K}_{h'}(t_n,\cdot)\big\|_{L^2}^2 \\
&\leq 2 \sup_{h\in I_{\eta},f\in D} \sum_{n=N+1}^\infty \underline{k}_h(f,f_n)^2 + 2 \sup_{h'\in I_{\eta}, f\in D} \sum_{n=N+1}^\infty \underline{k}_{h'}(f,f_n)^2 \ .
\end{align*}
The initial problem now comes down to the proof that $k_h(f,f)$ is the uniform limit in ${h\in I_{\eta}}$ and $f\in D$ of $\sum \underline{k}_h(f_n,f)^2$. Let $\nu>0$. We recall that for any $g\in L^2(T)$, $\|k_h(f,\cdot) - k_h(g,\cdot)\|_{\mathcal{H}_h} = \|f-g\|_{L^2}^{2h}$. It follows from the density of $\{f_n\}_{n\in \N}$ in $L^2(T)$ that for any $f\in L^2$, there is an index $\alpha \in \N$ such that ${\|f-f_{\alpha}\|_{L^2} \leq \nu^{1/4\eta}}$. In fact, the compactness of $D$ implies that there is an integer $N_{\nu}$ such that $D$ can be covered by balls of radius at most $\nu^{1/4\eta}$ centered in $\{f_{\alpha_j}, j=1\dots N_{\nu}\} \subset \{f_n\}_{n\in \N}$. Besides, from the construction of $\left\{\underline{k}_h(f_n,\cdot), n\in \N\right\}$, $k_h(f_{\alpha},\cdot) \in \textrm{Span}\left\{\underline{k}_h(f_j,\cdot), j\leq \alpha\right\}$. As a consequence of the previous points, if $f$ is in the ball centered in $f_{\alpha_j}$,
\begin{align*}
\sup_{h\in I_{\eta}} \|k_h(f,\cdot) - k_h(f_{\alpha_j},\cdot)\|_{\mathcal{H}_h}^2 &= \sup_{h\in I_{\eta}} \left(\sum_{n=1}^{\alpha_j} \left(\underline{k}_h(f,f_n) - \underline{k}_h(f_{\alpha_j},f_n)\right)^2 + \sum_{n=\alpha_j+1}^{\infty} \underline{k}_h(f,f_n)^2\right) \\
&= \sup_{h\in I_{\eta}} \|f-f_{\alpha_j}\|_{L^2}^{4h}
\end{align*}
and therefore, $\sup_{h\in I_{\eta}} \sum_{n=\alpha_j+1}^{\infty} \underline{k}_h(f,f_n)^2 \leq \sup_{h\in I_{\eta}} \|f-f_{\alpha_j}\|_{L^2}^{4h}$ which is less than $\nu$. This finally reads: for any $N\geq \alpha = \max_{j=1\dots N_{\nu}} \alpha_j$,
\begin{align*}
\sup_{h\in I_{\eta}, f\in D} |k_h(f,f) - \sum_{n=1}^N \underline{k}_h(f,f_n)^2| &= \sup_{h\in I_{\eta}, f\in D} \sum_{n=N+1}^{\infty} \underline{k}_h(f,f_n)^2 \\
&\leq \nu ,
\end{align*}
so the convergence is uniform and this ends the proof.
\end{proof}

Let $d_m$ denote the distance induced by the $L^2(T,m)$ norm. Expressed in a more general form, the following corollary is obtained from the previous result:
\begin{corollary}\label{cor:regH}
For all compact subset $D$ of $L^2(T,m)$ of $d_m$-diameter smaller than $1$, for any $\eta \in (0,1/4)$, there exists a constant ${C>0}$ (depending on $D$ and $\eta$) such that, $\forall f,f'\in D, \ \forall h,h'\in [\eta,1/2-\eta]$,
\begin{equation}\label{eq:accroissmentsHetF}
\EE \left((\BB_{h,f}-\BB_{h',f'})^2\right) \leq C\ (h'-h)^2 + 2\ m\left(|f-f'|^2\right)^{2(h\wedge h')} \ .
\end{equation}
\end{corollary}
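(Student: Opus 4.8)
The plan is to deduce Corollary \ref{cor:regH} from Theorem \ref{th:regH} together with the known spatial increment bound of the $h$-fBm, by a standard triangle-inequality decomposition that splits a joint increment in $(h,f)$ into a pure $h$-increment (handled by the theorem) and a pure $f$-increment (handled by the covariance \eqref{eq:dotprod}). First I would fix $f,f'\in D$ and $h,h'\in[\eta,1/2-\eta]$, and write, using $(a-b)^2\leq 2(a-c)^2+2(c-b)^2$ with the intermediate point $\BB_{h',f}$,
\begin{equation*}
\EE\left((\BB_{h,f}-\BB_{h',f'})^2\right) \leq 2\,\EE\left((\BB_{h,f}-\BB_{h',f})^2\right) + 2\,\EE\left((\BB_{h',f}-\BB_{h',f'})^2\right) \ .
\end{equation*}
The first term is a pure $h$-increment at the fixed point $f\in D$, so Theorem \ref{th:regH} applies directly and bounds it by $2 C_{\eta,D}\,(h-h')^2$.

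For the second term I would use that, for fixed $h'$, the process $\{\BB_{h',g},\ g\in L^2\}$ is an $h'$-fBm with covariance $k_{h'}$ from \eqref{eq:dotprod}. Hence
\begin{equation*}
\EE\left((\BB_{h',f}-\BB_{h',f'})^2\right) = k_{h'}(f,f)+k_{h'}(f',f')-2k_{h'}(f,f') = m\left(|f-f'|^2\right)^{2h'} \ ,
\end{equation*}
where the last equality is immediate upon expanding \eqref{eq:dotprod}. Since $D$ has $d_m$-diameter at most $1$, we have $m(|f-f'|^2)^{1/2}\leq 1$, and as the exponent map $h'\mapsto m(|f-f'|^2)^{2h'}$ is monotone in the base-$\leq 1$ regime, I can bound $m(|f-f'|^2)^{2h'}\leq m(|f-f'|^2)^{2(h\wedge h')}$. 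Combining the two estimates yields
\begin{equation*}
\EE\left((\BB_{h,f}-\BB_{h',f'})^2\right) \leq 4 C_{\eta,D}\,(h-h')^2 + 2\,m\left(|f-f'|^2\right)^{2(h\wedge h')} \ ,
\end{equation*}
which is \eqref{eq:accroissmentsHetF} after renaming the constant $C = 4C_{\eta,D}$.

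The only genuinely delicate point is the handling of the exponent in the spatial term: one must ensure the diameter-at-most-one hypothesis is actually used so that raising the base $m(|f-f'|^2)^{1/2}\in[0,1]$ to a smaller power only increases it, legitimising the replacement of $h'$ by $h\wedge h'$. By symmetry of the roles of $(h,f)$ and $(h',f')$, the same argument with the intermediate point $\BB_{h,f'}$ would instead produce the exponent $h\wedge h'$ directly, so either route gives the stated symmetric bound; I expect no further obstacle beyond bookkeeping the constant.
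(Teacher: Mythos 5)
Your proof is correct and is exactly the argument the paper leaves implicit (the corollary is stated as an immediate consequence of Theorem \ref{th:regH} with no written proof): split the increment at $\BB_{h',f}$, apply Theorem \ref{th:regH} to the $h$-increment, compute the spatial increment from the covariance $k_{h'}$, and use the diameter-at-most-one hypothesis to pass from the exponent $2h'$ to $2(h\wedge h')$. The only cosmetic slips are the constant ($2C_{\eta,D}$, not $4C_{\eta,D}$, though either is a valid upper bound) and the closing aside that the intermediate point $\BB_{h,f'}$ would yield the exponent $h\wedge h'$ ``directly'' --- it would yield $2h$ and still require the same monotonicity step.
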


\section{Metric entropy and the fractional Brownian field}

In this section, we address the following question: under which conditions does the fBf have a continuous modification? As this is often the case, the answer is closely related to metric entropy of the indexing collection. Once the answer is made clear, we further study the link between the $h$-fBm and metric entropy, providing an estimate of the small deviations of the process. We remark here that speaking of continuous modification of a process requires the process to have a separable modification, in the sense of Doob. This is always the case for multiparameter processes \cite{davar}, but it is no longer clear when $\R^d$ is replaced by an $L^2$ space. Theorem 2 of \cite[p.153]{Gikhman} provides an answer when the process is indexed by a separable metric space with value in a locally compact space, which includes the $L^2$--fBf.

\subsection{Continuity}

Following equation (\ref{eq:accroissmentsHetF}), on a subdomain $D \subset L^2$ of $d_m$-diameter smaller than $1$:
\begin{align}\label{eq:accroissementsD}
\EE \left((\BB_{h,f}-\BB_{h',g})^{2}\right) &\leq 2C\ \max\left(|h-h'|,m(|f-g|^2)^{1/2}\right)^{4(h\wedge h')} \\
&\leq 2C \ d\left((h,f),(h',g)\right)^{4(h\wedge h')}  \ , \nonumber
\end{align}
where $d$ is the product distance on $(0,1/2]\times L^2(T,m)$.

Let $\tilde{K}$ be a compact of $L^2(T,m)$ of $d_m$-diameter less than $1$, and $a\in (0,1/2)$. Let $\eta>0$, $K_a= [a,1/2-\eta]\times \tilde{K}$ and $C = \left\{\BB_{h,f}, (h,f)\in K_a\right\}$ be a subspace of $L^2(\Omega)$. To measure the distance between points, let $\delta$ be defined by ${\delta(\BB_{\varphi_1},\BB_{\varphi_2}) = \sqrt{\EE(\BB_{\varphi_1}-\BB_{\varphi_2})^2}}$, for $\varphi_1,\varphi_2\in K_a$. For any $\varepsilon<1$, $N(C,\delta,\varepsilon)$ denotes the metric entropy of $C$, that is, the smallest number of $\delta$-balls of radius at most $\varepsilon$ needed to cover $C$. We will also make use of the notation $N(\varepsilon)$ when the context is clear, and denote by $H(\varepsilon)$ the log-entropy $\log\left(N(\varepsilon)\right)$. We give a first result on the modulus of continuity, which is a simple consequence of a famous theorem of Dudley \cite{dudley} and of inequality (\ref{eq:accroissementsD}).

\begin{proposition}\label{prop:modulusCo}
Assume that there exist some $M,\alpha \in \R_+$, such that for all sufficiently small $\varepsilon$, $N(C,\delta,\varepsilon) \leq M \varepsilon^{-\alpha}$. Then, the mapping $x\mapsto x^{2a}\ \sqrt{-\log x}$ is a \emph{uniform modulus of continuity} for $\left\{\BB_{h,f}, (h,f)\in K_a\right\}$, meaning that there exists a measurable $c_{\omega}$ such that almost surely:
\begin{equation*}
\forall (h,f),(h',g) \in K_a, \quad |\BB_{h,f} - \BB_{h',g}| \leq c_{\omega} \ d\left((h,f),(h',g)\right)^{2a} \ \sqrt{-\log d\left((h,f),(h',g)\right)} \ .
\end{equation*}
\end{proposition}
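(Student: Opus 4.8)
The plan is to invoke the modulus-of-continuity form of Dudley's theorem (\cite{dudley}) for the Gaussian family $C=\{\BB_{h,f},(h,f)\in K_a\}$ equipped with its canonical metric $\delta$, and then to transfer the resulting estimate to the product distance $d$ by means of inequality (\ref{eq:accroissementsD}). The first step is to compare the two metrics on $K_a$. Every pair $(h,f),(h',g)\in K_a$ has $h\wedge h'\geq a$, and the product distance $r:=d((h,f),(h',g))$ stays below some $D_1<1$: indeed the $h$-interval has length $1/2-\eta-a<1$ and $\tilde{K}$ has $d_m$-diameter $<1$ (this bound $D_1<1$ is also what makes $\sqrt{-\log r}$ well defined throughout $K_a$). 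Since $x\mapsto x^{4(h\wedge h')}$ is increasing and $r\leq 1$, inequality (\ref{eq:accroissementsD}) gives $r^{4(h\wedge h')}\leq r^{4a}$, hence
$$\delta(\BB_{h,f},\BB_{h',g})=\sqrt{\EE\left((\BB_{h,f}-\BB_{h',g})^2\right)}\leq \sqrt{2C}\ r^{2a}.$$
In particular the hypothesis $N(C,\delta,\varepsilon)\leq M\varepsilon^{-\alpha}$ yields $H(\varepsilon)\leq \log M+\alpha\log(1/\varepsilon)$, so the entropy integral $\int_0^u\sqrt{H(\varepsilon)}\,{\rm d}\varepsilon$ is finite; this is precisely the hypothesis under which Dudley's theorem applies and furnishes both a continuous modification and a modulus of continuity.

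Next I would use Dudley's theorem in the form: there is a universal constant $K$ and, almost surely, a random threshold $u_0>0$ such that $\sup_{\delta(\varphi_1,\varphi_2)\leq u}|\BB_{\varphi_1}-\BB_{\varphi_2}|\leq K\int_0^u\sqrt{H(\varepsilon)}\,{\rm d}\varepsilon$ for every $u\leq u_0$. A routine estimate (integration by parts, or the asymptotics of $\int_0^u\sqrt{\log(1/\varepsilon)}\,{\rm d}\varepsilon$) shows $\int_0^u\sqrt{H(\varepsilon)}\,{\rm d}\varepsilon\leq K'\,u\sqrt{\log(1/u)}$ for all small $u$. Given a pair at product distance $r$, the metric comparison lets me take $u=\sqrt{2C}\,r^{2a}$; since $\log(1/u)=-\tfrac12\log(2C)+2a\log(1/r)\sim 2a\log(1/r)$ as $r\to 0$, substituting turns the bound into $|\BB_{h,f}-\BB_{h',g}|\leq c\,r^{2a}\sqrt{-\log r}$ for all $r$ below a (random) threshold, which is exactly the claimed modulus associated with $x\mapsto x^{2a}\sqrt{-\log x}$.

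It then remains to produce a single measurable constant $c_\omega$ valid for all pairs in $K_a$, i.e. to absorb the pairs whose distance is bounded away from $0$. For those, Dudley's theorem also provides an almost surely continuous, hence bounded, modification on the compact $K_a$, so $|\BB_{\varphi_1}-\BB_{\varphi_2}|\leq 2\sup_{\varphi\in K_a}|\BB_{\varphi}|<\infty$ almost surely; since $r\mapsto r^{2a}\sqrt{-\log r}$ is bounded below by a strictly positive constant on $[r_0,D_1]$ (here $D_1<1$ is essential, as it keeps $\sqrt{-\log r}$ away from $0$), this trivially fits under the target bound after enlarging the constant, and taking $c_\omega$ to be the larger of the two contributions finishes the argument. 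The main obstacle is exactly this passage from the intrinsic-metric modulus produced by Dudley to the product-metric modulus $x^{2a}\sqrt{-\log x}$: it hinges on the sharp comparison $\delta\leq\sqrt{2C}\,d^{2a}$ (the exponent $2a$ coming from the worst case $h\wedge h'=a$ in (\ref{eq:accroissementsD})) together with the asymptotics of the entropy integral, while the global uniformity of $c_\omega$ relies on the diameter of $\tilde{K}$ being strictly below $1$.
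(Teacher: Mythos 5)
Your argument is correct and follows essentially the same route as the paper: apply Dudley's entropy-integral modulus of continuity to $C$ with its canonical metric $\delta$, bound the entropy integral by $u\sqrt{-\log u}$ using the polynomial entropy hypothesis, and transfer to the product distance via the comparison $\delta\leq\sqrt{2C}\,d^{2a}$ coming from (\ref{eq:accroissementsD}) with $h\wedge h'\geq a$. The only cosmetic difference is that you split off the pairs at distance bounded away from zero, whereas the paper composes the two moduli $F$ and $G(x)=x^{2a}$ directly.
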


In particular, the fBf on $K_a$ is a.s. Hölder-continuous for any $b<2a$. Such exponential bounds on the entropy appear frequently in statistics, for instance when $C$ is a Vapnick-Cervonenkis class with exponent $\nu$:
\begin{equation*}
\forall \varepsilon>1, \quad N(C,\varepsilon) \leq K \varepsilon^{-2\nu} |\log \varepsilon|^{\nu}.
\end{equation*}
See for instance \cite{adlerTaylor} for a review of these properties. The conditions of the previous Proposition are thus met on a Vapnick-Cervonenkis indexing class, choosing any $\alpha > 2\nu$.

\begin{proof}
The elements of Dudley's Theorem are described as follows: let $L$ be the isonormal process over $L^2(\Omega)$, that is, on the same probability space $\Omega$, the centred Gaussian process whose covariance is given by $\mathbb{E}\left(L(X_1) L(X_2)\right) = \EE\left(X_1 X_2\right)$, for all $X_1,X_2\in C$. Thus ${\mathbb{E}\left((L(X_1)-L(X_2))^2\right)} = {\delta(X_1,X_2)}$. Using a chaining argument and Borel-Cantelli lemma, Dudley proved that $F(x) = \int_0^x \sqrt{\log N(C,\delta,\varepsilon)} \ {\rm d}\varepsilon$ is a modulus of continuity (uniform, and potentially infinite) for the sample paths of $L$ on $C$. A straightforward calculus shows that under the assumptions on the entropy, $x \sqrt{- \log x} \leq F(x) \leq 2x \sqrt{-\log x}$ for all $x \in (0,e^{-1/2}]$. Hence, $x \sqrt{-\log x}$ is a modulus of continuity of $L$. Let $G:x\in \R_+ \mapsto x^{2a}$, so that according to (\ref{eq:accroissementsD}): $\delta(\BB_{h,f},\BB_{h',g}) \leq G\left(d((h,f),(h',g))\right)$. Then, $(h,f)\mapsto L(\BB_{h,f})$ and $(h,f)\mapsto \BB_{h,f}$ have the same law so there exists a measurable subset $\tilde{\Omega}\subseteq \Omega$ of measure $1$, and a measurable $c_{\omega}$ such that for any $\omega \in \tilde{\Omega}$:
\begin{align*}
\forall (h,f),(h',g) \in K_a, \quad |\BB_{h,f} - \BB_{h',g}| &\leq c_{\omega} \ F\left(\delta\left((h,f),(h',g)\right)\right) \\
&\leq c_{\omega} \ F\circ G\left(d\left((h,f),(h',g)\right)\right) \ .
\end{align*}
\end{proof}

The rest of this section is dedicated to improving this result, in various directions. First we argue that studying entropy conditions for the fBf is essentially the same as studying the entropy of the $h$-fBm. This is the purpose of Theorem \ref{th:continuity}, preceded by the following technical lemma. Then, in section \ref{Sec:mpmBm}, we will consider more specific indexing collections for which the regularity results are more precise.

\begin{lemma}\label{lem:CNSconvEnt}
Let $(T_1,d_1)$ and $(T_2,d_2)$ be two compact metric spaces and denote by $d$ the product distance on $T_1\times T_2$. The log-entropies on $(T_1,d_1)$, $(T_2,d_2)$ and $(T_1\times T_2,d)$ are respectively denoted by $H_1(\varepsilon)$, $H_2(\varepsilon)$ and $H(\varepsilon)$. Then, the following lower and upper bounds on $H$ hold (allowing the integral to be infinite):
\begin{align}\label{eq:CNSconvEnt}
\frac{1}{2} \int_0^1 \left(\sqrt{H_1(\varepsilon)} + \sqrt{H_2(\varepsilon)}\right) \ {\rm d}\varepsilon \leq \int_0^1 \sqrt{H(\varepsilon)} \ {\rm d}\varepsilon \leq \sqrt{2} \ \int_0^1 \left(\sqrt{H_1(\varepsilon)}+ \sqrt{H_2(\varepsilon)}\right) \ {\rm d}\varepsilon .
\end{align}
\end{lemma}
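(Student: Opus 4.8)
The plan is to reduce the two integral inequalities to a single pointwise (in $\varepsilon$) comparison of covering numbers, and it is this comparison that exploits the product structure of $d$. I read "the product distance" as $d = d_1 \vee d_2$, consistent with the way $d$ is used on $(0,1/2]\times L^2(T,m)$ in \eqref{eq:accroissementsD}. The key observation is that for this sup-metric a ball in $T_1\times T_2$ factorises as a product of balls: $B_d\big((x_1,x_2),\varepsilon\big) = B_{d_1}(x_1,\varepsilon)\times B_{d_2}(x_2,\varepsilon)$. Everything else is elementary scalar analysis performed under the integral sign.

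First I would establish the two-sided estimate $\max\big(N_1(\varepsilon),N_2(\varepsilon)\big) \le N(\varepsilon) \le N_1(\varepsilon)\,N_2(\varepsilon)$, where $N_i(\varepsilon) = N(T_i,d_i,\varepsilon)$ and $N(\varepsilon) = N(T_1\times T_2,d,\varepsilon)$. The upper bound comes from taking an optimal $\varepsilon$-covering of each factor and forming the products of the covering balls: by the factorisation above, these products are precisely $\varepsilon$-balls covering $T_1\times T_2$. For the lower bound I would take any $\varepsilon$-covering of $T_1\times T_2$ and push it forward by the coordinate projections $\pi_i$; since $\pi_i$ is $1$-Lipschitz and maps $B_d\big((x_1,x_2),\varepsilon\big)$ onto $B_{d_i}(x_i,\varepsilon)$, the image is an $\varepsilon$-covering of $T_i$ of no greater cardinality, so $N_i(\varepsilon)\le N(\varepsilon)$. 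Compactness of $T_1$ and $T_2$ guarantees that all these covering numbers are finite, so the manipulations are legitimate.

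Taking logarithms yields the pointwise bounds $\max\big(H_1(\varepsilon),H_2(\varepsilon)\big) \le H(\varepsilon) \le H_1(\varepsilon)+H_2(\varepsilon)$. From here the result is purely a matter of elementary inequalities applied to $\sqrt{\,\cdot\,}$. For the lower bound I would use $\sqrt{H(\varepsilon)} \ge \sqrt{\max(H_1,H_2)} = \max\big(\sqrt{H_1},\sqrt{H_2}\big) \ge \tfrac12\big(\sqrt{H_1}+\sqrt{H_2}\big)$; for the upper bound, $\sqrt{H(\varepsilon)} \le \sqrt{H_1+H_2} \le \sqrt{H_1}+\sqrt{H_2} \le \sqrt{2}\,\big(\sqrt{H_1}+\sqrt{H_2}\big)$, the last step being a harmless loosening to match the stated constant. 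Integrating each inequality over $\varepsilon\in(0,1)$ and using monotonicity of the integral gives \eqref{eq:CNSconvEnt} at once, with the convention (as in the statement) that any of the three integrals may equal $+\infty$.

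The only genuinely delicate point is the factorisation of balls, hence the clean multiplicativity $N(\varepsilon)\le N_1(\varepsilon)N_2(\varepsilon)$ and the exact projection identity: these are special to the sup product metric. For a Euclidean-type product distance the same scheme still works, but the product and projection steps pick up factors of $\sqrt{2}$ that would have to be tracked through a rescaling of $\varepsilon$; since the lemma is applied with the max metric, no such complication arises. I would also note that I work throughout with covering numbers, but the classical comparison with packing numbers shows the statement is insensitive to this choice up to the constants already present.
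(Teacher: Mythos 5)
Your proof is correct and follows essentially the same route as the paper's: both establish the pointwise bounds $N_1(\varepsilon)\vee N_2(\varepsilon)\le N(\varepsilon)\le N_1(\varepsilon)N_2(\varepsilon)$ via the factorisation of balls for the sup product metric (products of balls for the upper bound, coordinate projections for the lower bound), and then deduce \eqref{eq:CNSconvEnt} by the elementary inequalities for $\sqrt{\cdot}$ and integration. You merely spell out the final square-root manipulations that the paper leaves implicit.
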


\begin{proof}
Let $B^i(c,r)$ the open ball of $(T_i,d_i)$ centred at $c$ with radius $r$, $i=1,2$. Let $\varepsilon>0$ and $\{B^1_j(c^1_j,\varepsilon) \ , \ 1\leq j\leq N_1(\varepsilon)\}$ (resp. $\{B^2_j(c^2_j,\varepsilon) \ , \ 1\leq j\leq N_2(\varepsilon)\}$) be a $\varepsilon$-covering of $T_1$ (resp. $T_2$). First notice that for the product distance $d$, one has $B_i^1(c_i^1,\varepsilon)\times B_j^2(c_j^2,\varepsilon) = B_d((c_i^1,c_j^2),\varepsilon)$ fol all $(i,j)\in \{1,\dots,N_1(\varepsilon)\}\times \{1,N_2(\varepsilon)\}$. A first inequality follows:
\begin{equation*}
N(T_1\times T_2,d,\varepsilon) \leq N_1(\varepsilon)\ N_2(\varepsilon) \ .
\end{equation*}
Reciprocically, if $\{B_1(c_1,\varepsilon),\dots,B_{N(\varepsilon)}(c_{N(\varepsilon)},\varepsilon)\}$ is a $\varepsilon$-covering of $T_1\times T_2$, then each $c_j$ rewrites: $c_j = (c^1_j,c^2_j)$ and so $B_j(c_j,\varepsilon) = B_j^1(c_j^1,\varepsilon)\times B_j^2(c_j^2,\varepsilon)$. Then we have:
\begin{equation*}
T_i \subseteq \bigcup_{j=1}^{N(\varepsilon)} B_j^i(c_j^i,\varepsilon) \ , \quad i=1,2.
\end{equation*}
Hence, $N(\varepsilon) \geq N_1(\varepsilon) \vee N_2(\varepsilon)$. The upper and lower bounds in (\ref{eq:CNSconvEnt}) follow.
\end{proof}

\begin{theorem}\label{th:continuity}
Let $\BB$ be a fBf indexed on a compact subset $I$ of $(0,1/2]$, and $K$ be a compact subset of $L^2(T,m)$ of $d_m$-diameter smaller than $1$. If the Dudley integral converges:
\begin{align}\label{eq:convInt}
\int_{(0,1)} \sqrt{\log N(K,d_m,\varepsilon)} \ d\varepsilon < \infty \ , 
\end{align}
then $\BB$ indexed by $I\times K$ has almost surely continuous sample paths.
\end{theorem}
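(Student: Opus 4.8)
The plan is to verify the hypothesis of Dudley's theorem for the \emph{canonical} metric of $\BB$ on $I\times K$, reducing the two-parameter entropy integral to the one appearing in (\ref{eq:convInt}) by means of the product structure of Lemma \ref{lem:CNSconvEnt}. Write $a=\min I>0$ (a positive minimum exists since $I$ is a compact subset of $(0,1/2]$) and let $\delta$ denote the canonical metric $\delta\big((h,f),(h',g)\big)=\sqrt{\EE(\BB_{h,f}-\BB_{h',g})^2}$. As in the proof of Proposition \ref{prop:modulusCo}, it suffices to bound the Dudley integral $\int_0^{\infty}\sqrt{\log N(I\times K,\delta,\varepsilon)}\ {\rm d}\varepsilon$: the isonormal process over $\{\BB_{h,f}\}$ has the same law, Dudley's theorem \cite{dudley} then yields a.s. uniformly $\delta$-continuous paths, and since $\delta$ dominates a positive power of the product distance this transfers to continuity in $(h,f)$; the separable-modification issue is settled by the result of \cite{Gikhman} quoted at the start of this section.

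First I would bound $\delta$ from above by a product metric. Starting from Corollary \ref{cor:regH} (equivalently (\ref{eq:accroissementsD})), using $\sqrt{A+B}\le\sqrt A+\sqrt B$ together with $d_m<1$ and $h\wedge h'\ge a$, one obtains
\begin{equation*}
\delta\big((h,f),(h',g)\big)\ \le\ \sqrt{C}\,|h-h'|\ +\ \sqrt{2}\,d_m(f,g)^{2(h\wedge h')}\ \le\ \sqrt{C}\,|h-h'|\ +\ \sqrt{2}\,d_m(f,g)^{2a}\ .
\end{equation*}
The right-hand side is, up to a factor $2$, the max-distance $\hat d=\max(\delta_I,\delta_K)$ built from $\delta_I(h,h')=\sqrt C\,|h-h'|$ on $I$ and $\delta_K(f,g)=\sqrt2\,d_m(f,g)^{2a}$ on $K$. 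Since $\delta\le 2\hat d$ pointwise, a $\hat d$-covering of radius $\varepsilon/2$ is a $\delta$-covering of radius $\varepsilon$, so $N(I\times K,\delta,\varepsilon)\le N(I\times K,\hat d,\varepsilon/2)$ and it is enough to control the Dudley integral of $\hat d$.

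Next I would invoke Lemma \ref{lem:CNSconvEnt} with $(T_1,d_1)=(I,\delta_I)$ and $(T_2,d_2)=(K,\delta_K)$, both compact, to split
\begin{equation*}
\int_0^{1}\sqrt{\log N(I\times K,\hat d,\varepsilon)}\ {\rm d}\varepsilon\ \le\ \sqrt2\int_0^{1}\Big(\sqrt{\log N(I,\delta_I,\varepsilon)}+\sqrt{\log N(K,\delta_K,\varepsilon)}\Big)\ {\rm d}\varepsilon\ .
\end{equation*}
The first summand is finite because $(I,\delta_I)$ is a bounded interval of $\R$, so $\log N(I,\delta_I,\varepsilon)\lesssim\log(1/\varepsilon)$ and $\int_0^1\sqrt{\log(1/\varepsilon)}\,{\rm d}\varepsilon<\infty$; note this term depends on the $h$-range only through the constant $a$, which is precisely what makes the final criterion free of $h$.

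The \textbf{main obstacle} is the second summand: controlling $\int_0^1\sqrt{\log N(K,\delta_K,\varepsilon)}\,{\rm d}\varepsilon$ by the hypothesis (\ref{eq:convInt}). Here the Hölder rescaling $\delta_K=\sqrt2\,d_m^{2a}$ must be undone. Using $N(K,\delta_K,\varepsilon)=N\big(K,d_m,(\varepsilon/\sqrt2)^{1/2a}\big)$ and the substitution $\varepsilon=\sqrt2\,\rho^{2a}$, this integral turns into a \emph{weighted} Dudley integral in $d_m$ over the range $\rho<1$ fixed by the assumption ${\rm diam}_{d_m}(K)<1$, and the task is to dominate it by the plain integral (\ref{eq:convInt}). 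Reconciling the rescaled entropy with the $d_m$-entropy of the compact set $K$ is the technical heart of the statement, and the exponent $2a\le 1$ together with the diameter constraint is what one must exploit to close it. Once this comparison is secured, the three bounds combine to show that the Dudley integral of $\delta$ on $I\times K$ is finite, and Dudley's theorem delivers the almost surely continuous sample paths.
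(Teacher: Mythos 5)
You have correctly reconstructed the architecture of the paper's own proof: bound the canonical metric $\delta$ by a power of the product distance via Corollary \ref{cor:regH}, dispose of the $I$-factor (a compact interval, whose entropy integral always converges), and split the product entropy with Lemma \ref{lem:CNSconvEnt}. Up to that point your argument and the paper's coincide. The problem is the step you yourself flag as the ``main obstacle'' and then leave open: dominating $\int_0^1\sqrt{\log N(K,\delta_K,\varepsilon)}\,{\rm d}\varepsilon$ by the hypothesis (\ref{eq:convInt}). A proof that ends with ``once this comparison is secured'' has not proved the theorem, and this comparison is not a routine technicality that the diameter constraint rescues. Your own substitution $\varepsilon=\sqrt{2}\,\rho^{2a}$ turns the second summand into a constant times
\begin{equation*}
\int_0^1\sqrt{\log N(K,d_m,\rho)}\ \rho^{2a-1}\ {\rm d}\rho \ ,
\end{equation*}
and since $2a-1<0$ the weight diverges at $0$: convergence of the unweighted integral (\ref{eq:convInt}) does not imply convergence of the weighted one. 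Concretely, if $\log N(K,d_m,\rho)\approx\rho^{-\beta}$ as $\rho\rightarrow 0$ (realizable for a compact ellipsoid of the infinite-dimensional $L^2(T,m)$, suitably rescaled to have diameter less than $1$), then (\ref{eq:convInt}) holds if and only if $\beta<2$, while the weighted integral converges if and only if $\beta<4a$; any $\beta\in[4a,2)$ with $a<1/2$ defeats the comparison. Worse, for $\beta\in(4a,2)$ Sudakov's minoration applied to the canonical metric $d_m(f,g)^{2a}$ of the $a$-fBm gives $\EE\sup_{f\in K}\BB_{a,f}\geq c\,\varepsilon^{1-\beta/(4a)}\rightarrow\infty$, so the obstacle is intrinsic and not an artifact of the route you chose.

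For what it is worth, the paper's own proof is no more detailed at this point: it bounds $\delta\leq C\,d^{\iota}$ with $\iota=\inf I$, invokes Lemma \ref{lem:CNSconvEnt} to pass between the entropies of $I\times K$ and of $K$, and then concludes with ``hence the result,'' which implicitly performs exactly the weighted-versus-unweighted comparison above. So you have located the genuine crux of the statement, but as written your proposal does not close it, and closing it along these lines would require either strengthening the hypothesis to the entropy integral in the canonical metric $d_m^{2a}$ (equivalently, inserting the weight $\rho^{2a-1}$ into (\ref{eq:convInt})) or an argument going beyond Dudley's bound.
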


\begin{remark}
Fernique showed in \cite{Fernique75} that for a stationary process indexed on $\R^d$, the convergence of the Dudley integral is a necessary condition (see \cite[Chap.13]{LedouxTalagrand}, where the result is derived from a majorizing measure argument combined with Haar measures for processes indexed on a locally compact Abelian group). The extension of this result to increment stationary processes is explained clearly in \cite[p.251]{MarcusRosen}. In the case of the $h$-fBm (increment stationary), the indexing collection is an infinite-dimensional Hilbert space, hence it has no locally compact subgroups of noticeable interest. Whether condition (\ref{eq:convInt}) is necessary remains an open problem.
\end{remark}

\begin{proof}
We prove that the convergence of the integral (\ref{eq:convInt}) implies the convergence of this other integral: $\int_{(0,1)} \sqrt{\log N(I\times K,\delta,\varepsilon)} \ d\varepsilon$, which then implies the result according to a famous result of Dudley \cite{dudley}. For $h,h' \in I$, and $f,f'\in K$, let $\iota = \inf I$ which is positive. It readily follows:
\begin{align*}
\delta\left((h,f) , (h',f')\right) \leq C\ d\left((h,f) , (h',f')\right)^{\iota} \ , 
\end{align*}
where $d$ is still the product distance of $d_m$ and $d_1$ (the absolute value distance on $\R$). Since $\int_0^1 \left(\log(N(I,d_1,\varepsilon)) \right)^{1/2} \ {\rm d}\varepsilon < \infty$, Lemma \ref{lem:CNSconvEnt} implies that the convergence of the Dudley integral for $d$ is equivalent to the convergence of the Dudley integral for $d_m$. Hence the result.
\end{proof}

Examples of indexing classes for which the fBf is a.s. continuous will be discussed in section \ref{Sec:applications}. We simply recall that an object as simple as the Brownian motion indexed over the Borel sets of $[0,1]^2$, that is, the centred Gaussian process with covariance:
\begin{equation*}
\forall U,V\in \mathcal{B}([0,1]^2), \quad \EE\left(W_U \ W_V\right) = \lambda(U\cap V) \ ,
\end{equation*}
is almost surely unbounded \cite[p.28]{adlerTaylor}.

\subsection{Small deviations}

In this paragraph, we explore with more details the relationship with metric entropy. In Theorem \ref{theo:smallBall}, a connection between the small deviations of the $h$-fBm and metric entropy is expressed, opening the field of the measure of local properties of the fBf, such as Chung laws of the iterated logarithm or measure of Hausdorff dimension of the paths.

Perhaps the most general result on entropy and small ball probabilities over Wiener spaces is due to Goodman \cite{Goodman} who showed that, for $K_{\mu}$ the unit ball of the RKHS of $\mu$,
\begin{equation*}
\lim_{\varepsilon \rightarrow 0} \ \varepsilon^2 H(K_{\mu}, \varepsilon) = 0 \ ,
\end{equation*}
where $H(K_{\mu},\varepsilon)$ is the log-entropy computed under the Banach norm (which makes $K_{\mu}$ compact in the Banach space).

Kuelbs and Li \cite{KuelbsLi} considerably refined this equality, establishing a link between the small balls of a Gaussian measure and the metric entropy of $K_{\mu}$. To state it, let us introduce some notation: as $x\rightarrow a$, we shall write $f(x)\approx g(x)$ if
\begin{align*}
0<\liminf_{x\rightarrow a} f(x)/g(x) \leq \limsup_{x\rightarrow a} f(x)/g(x) < \infty \ .
\end{align*}
Let us assume that there exists a function $f$ which is regularly varying at infinity\footnote{i.e., a function such that there exists $\rho \in \R$ satisfying $\lim_{x\rightarrow \infty} f(\lambda x)/f(x) = \lambda^{\rho}$ for all $\lambda>0$.} and two constants $c_1$ and $c_2$ such that: \[c_1 \ f(\varepsilon) \leq -\log \mu(B(0,\varepsilon)) \leq c_2 \ f(\varepsilon) \ , \] as $\varepsilon\rightarrow 0$. Since $f$ is regularly varying, there exists $\alpha\geq 0$ and a slowly varying function at infinity\footnote{a regularly varying function with $\rho=0$.} $J$ such that $f(\varepsilon) = \varepsilon^{-\alpha} J(\varepsilon^{-1})$. If $\alpha>0$, then:
\begin{equation*}
H(K_{\mu},\varepsilon) \approx \varepsilon^{-2\alpha/(2+\alpha)} J\left(\varepsilon^{-1}\right)^{2/(2+\alpha)} .
\end{equation*}

\noindent In some circumstances this yields precise results. This is the case for fBm: let $\BB^h$ a fBm over $[0,1]$ and notice that for any $\varepsilon>0$, $N([0,1],d_h, \varepsilon) = \varepsilon^{-1/h}$ where $d_h$ is the distance on $[0,1]$ induced by the Lévy fBm. Then:
\begin{equation}\label{eq:smallBallsmultiparam}
-\log \PP \left( \sup_{t\in [0,1]} |\BB^h_t| \leq \varepsilon \right) \approx \varepsilon^{-1/h}, \quad \forall \ 0<\varepsilon<1 .
\end{equation}
Historically, this was first proved in \cite{Monrad} for one-parameter processes, then extended independently in \cite{ShaoWang,Talagrand95} to the multiparameter setting. We will generalise this result for the $h$-fBm. The difference between our result and the result of \cite{KuelbsLi} is then discussed in Remark \ref{Rem:smallBalls}.

In order to extend the result of equation (\ref{eq:smallBallsmultiparam}), the following lemma will be needed. It is interesting in itself, since it establishes that for each $h\in (0,1/2)$, the $h$-fBm is \emph{strongly locally nondeterministic} (SLND) in the following sense:

\begin{lemma}\label{lem:SLND}
Let $h\in (0,1/2)$. There exists a positive constant $C_0$ such that for all $f\in L^2(T,m)$ and for all $r\leq \|f\|$, the following holds:
\begin{equation*}
{\rm Var}\left(\BB^h_f \ | \ \BB^h_g, \|f-g\|\geq r\right) = C_0 r^{2h}.
\end{equation*}
\end{lemma}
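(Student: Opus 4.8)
The plan is to read the conditional variance as a distance to a subspace and then to exploit the scaling and stationarity already encoded in $k_h$. Set $\mathcal F_{f,r}=\overline{\mathrm{span}}\{\BB^h_g:\|f-g\|\ge r\}\subseteq L^2(\Omega)$; then $\mathrm{Var}(\BB^h_f\mid \BB^h_g,\ \|f-g\|\ge r)$ is exactly the squared $L^2(\Omega)$-distance from $\BB^h_f$ to $\mathcal F_{f,r}$, and through the canonical isometry between the Gaussian space of the $h$-fBm and its reproducing kernel space $H(k_h)$ (which sends $\BB^h_g$ to $k_h(\cdot,g)$) it equals the squared distance in $H(k_h)$ from $k_h(\cdot,f)$ to $\overline{\mathrm{span}}\{k_h(\cdot,g):\|f-g\|\ge r\}$. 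The statement then splits into a lower bound and an upper bound of the common order $r^{2h}$, together with the identification of the constant $C_0$.

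I would first record the symmetries that the explicit kernel provides, since they control everything except the lower bound. Because $\EE(\BB^h_f-\BB^h_g)^2=m(|f-g|^2)^{2h}$ depends on $f,g$ only through $f-g$ — in fact the covariance of the increments $\BB^h_{f+k}-\BB^h_k$ is again $k_h$ — the field has stationary increments, $\{\BB^h_{f+k}-\BB^h_k\}_f\overset{d}{=}\{\BB^h_f\}_f$; replacing $f$ by $\lambda f$ multiplies $k_h$ by $\lambda^{4h}$, giving the self-similarity $\{\BB^h_{\lambda f}\}_f\overset{d}{=}\{\lambda^{2h}\BB^h_f\}_f$; and since $k_h$ sees its arguments only through $m(f^2),m(g^2),m(|f-g|^2)$, the law is invariant under all linear isometries of $L^2(T,m)$. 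Isometry invariance reduces the conditional variance to a function of $\|f\|$ and $r$ alone, and the scaling relation then forces a pure power in $r$; what remains is to show that the residual dependence on $\|f\|$ disappears, which is precisely the assertion that $C_0$ is universal. Note that the hypothesis $r\le\|f\|$ places the origin (where $\BB^h_0=0$) inside the conditioning family, and this is what will anchor the estimate below.

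For the lower bound — the genuine local nondeterminism, and the real difficulty — I would leave the abstract Wiener construction aside and decompose $\BB^h$ across scales by subordination. Since $x\mapsto x^{2h}$ is a Bernstein function on $[0,\infty)$ for $2h\in(0,1)$, one has
\[ \|f-g\|^{2h}=C_h\int_0^\infty\big(1-e^{-u\,\|f-g\|}\big)\,u^{-1-2h}\,\mathrm du, \]
and for each $u>0$ the kernel $e^{-u\,\|f-g\|}=e^{-u\,m(|f-g|^2)}$ is positive definite on $L^2(T,m)$ by Schoenberg's theorem, hence the covariance of a centred stationary Gaussian field $Y^u$. Taking the $Y^u$ independent across $u$ and integrating the increments $Y^u_f-Y^u_0$ against an independent white noise of intensity $C_h\,u^{-1-2h}\,\mathrm du$ yields a Gaussian field whose increment variance is exactly $\|f-g\|^{2h}=\EE(\BB^h_f-\BB^h_g)^2$, so it realises $\BB^h$ in law and may be used to compute the conditional variance. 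As the scales are driven by independent noises, $\mathcal F_{f,r}$ decomposes as an orthogonal sum over $u$ and
\[ \mathrm{Var}(\BB^h_f\mid \BB^h_g,\ \|f-g\|\ge r)=C_h\int_0^\infty \mathrm{Var}\big(Y^u_f-Y^u_0\mid Y^u_g-Y^u_0,\ \|f-g\|\ge r\big)\,u^{-1-2h}\,\mathrm du. \]
Now $e^{-u\|f-g\|}$ is small as soon as $u\|f-g\|$ is large, so for every $u\ge c/r$ the variable $Y^u_f$ is almost uncorrelated both with $Y^u_0$ (here $\|f\|\ge r$ is used) and with every conditioning variable $Y^u_g$; the inner conditional variance is then bounded below by a positive constant, and keeping only this range of scales gives $\mathrm{Var}\ge c'\int_{c/r}^\infty u^{-1-2h}\,\mathrm du=C_0\,r^{2h}$. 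The opposite regime of small $u$ produces the matching upper bound; alternatively the upper bound follows more cheaply by conditioning on a single suitably chosen point and computing with $k_h$.

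The main obstacle is exactly this lower bound in infinite dimensions. The restriction of $\BB^h$ to any finite-dimensional subspace of $L^2(T,m)$ is a L\'evy fractional Brownian motion on $\R^N$ with Hurst index $2h$, for which Pitt's strong local nondeterminism supplies the bound, but one cannot pass to the limit: conditioning over all of $L^2$ uses strictly more variables than over the subspace, and Pitt's constant degrades with $N$, so the finite-dimensional result gives the wrong inequality and a vanishing constant. The scale decomposition above is what makes the bound dimension-free, its cost being the need to justify the orthogonal splitting of $\mathcal F_{f,r}$ across scales and the uniform per-scale decorrelation estimate. A secondary, more delicate point is the exact equality with a constant independent of $f$: the two-sided bound already gives the order $r^{2h}$, and upgrading this to an identity requires combining the scaling relation with the observation that, in infinite dimensions, the complement of the ball $\{g:\|f-g\|<r\}$ is rich enough that the single distinguished point (the origin) does not affect the local conditional variance.
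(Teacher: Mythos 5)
Your first step --- reducing the conditional variance, via increment stationarity, self-similarity and invariance under isometries of $L^2(T,m)$, to the normalized situation $r=\|f\|$ so that the answer must be of the form $C_0 r^{2h}$ with $C_0\ge 0$ --- is exactly what the paper does (it sets $\tilde f=(r/\|f\|)f$ and invokes Pitt's Lemma 7.1 argument). The problem is your lower bound $C_0>0$, which is indeed the whole difficulty, and your subordination argument for it does not work. At each scale $u$ the field $Y^u$ has covariance $e^{-u\,m(|f-g|^2)}$, i.e.\ a squared-exponential (Gaussian RBF) kernel in the Hilbert norm. Restricted to any line $t\mapsto f+tv$ this is the stationary process on $\R$ with covariance $e^{-u t^2}$, whose spectral density is a Gaussian and therefore violates the Szeg\H{o} condition $\int \log f(\lambda)\,(1+\lambda^2)^{-1}\,\mathrm{d}\lambda>-\infty$: the process is \emph{linearly deterministic}, so its values (and increments) outside any ball determine its value at the centre. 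Consequently $\mathrm{Var}\bigl(Y^u_f-Y^u_0 \mid Y^u_g-Y^u_0,\ \|f-g\|\ge r\bigr)=0$ for \emph{every} $u$ and $r$, and your decomposition yields only $\mathrm{Var}\ge 0$. The step where you pass from ``$Y^u_f$ is almost uncorrelated with each individual $Y^u_g$ and with $Y^u_0$'' to ``the conditional variance given the whole family is bounded below'' is precisely the non-implication that makes local nondeterminism hard: pairwise near-orthogonality says nothing about the distance to the closed span of infinitely many variables, and here that distance is actually zero. (A per-scale kernel $e^{-u\|f-g\|_{L^2}}$ in the unsquared norm would be Markov-type rather than deterministic, but the Bernstein representation then produces $\|f-g\|_{L^2}^{4h}$ only for $h<1/4$, and the per-scale bound would still need a real proof.) A secondary inaccuracy: the closed span of $\{X_g\}$ does not decompose as an orthogonal sum over scales, so your displayed identity should at best be the inequality obtained by conditioning on the larger field $\{Y^u_g-Y^u_0\}_{u,g}$ --- harmless for a lower bound, but it makes the vanishing of the per-scale terms fatal rather than merely lossy.

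For contrast, the paper obtains $C_0>0$ by contradiction: if $C_0=0$ there are finite linear combinations $B_n=\sum_j a_j(n)\BB^h_{g_j}$, $\|g_j-\tilde f\|\ge\|\tilde f\|$, converging to $\BB^h_{\tilde f}$ in $L^2(\PP)$, equivalently $b_n^*=\sum_j a_j(n)\langle\tilde{\mathcal K}_h k_h(g_j,\cdot),\cdot\rangle\to b^*=\langle\tilde{\mathcal K}_h k_h(\tilde f,\cdot),\cdot\rangle$ in $L^2(\mu)$. It then introduces the cosine transform $\mathcal F\varphi(x^*)=\int_E\cos\langle x^*,x\rangle\,\varphi(x)\,\mathrm{d}\mu(x)$ on the abstract Wiener space, shows (Appendix C) that for $\varphi\in E^*$ one has $\mathcal F\varphi(x^*)\neq 0$ iff $x^*=\lambda\varphi$, and uses the injectivity of $f\mapsto k_h(\cdot,f)$ up to scalars (Lemma \ref{lem:LinInd}) to conclude that $\mathcal F_0 b_n^*$ vanishes at $\tilde f$ for all $n$ while $\mathcal F_0 b^*(\tilde f)\neq 0$; Cauchy--Schwarz turns $L^2(\mu)$-convergence into pointwise convergence of the transforms, a contradiction. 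If you want to salvage a scale- or spectrum-based proof, you would need this kind of genuinely infinite-dimensional ``Fourier support'' input rather than a per-scale decorrelation estimate.
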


\begin{proof}
The proof for $C_0\geq 0$ relies essentially on the metric structure of the covariance of fBm, from which follow increment stationarity and scale invariance of the process. As such, the proof is the same as for the Lévy fractional Brownian motion in $\R^d$, as it appeared first in Lemma 7.1 of \cite{Pitt}. In his paper, Pitt used Fourier analysis to obtain $C_0>0$. Although this tool is finite-dimensional (because the Lebesgue measure is) and despite the non-existence of a standard infinite-dimensional Fourier transform, Gaussian measures provide a natural extension to an infinite-dimensional framework. Using Pitt's arguments, we obtain that for $\tilde{f} = (r/\|f\|) \ f$:
\begin{equation*}
{\rm Var}\left(\BB^h_f \ | \ \BB^h_g, \|f-g\|\geq r\right) = {\rm Var}\left(\BB^h_{\tilde{f}} \ | \ \BB^h_g, \|\tilde{f}-g\|\geq r=\|\tilde{f}\|\right) = C_0 r^{2h}.
\end{equation*}
If $C_0$ was to be $0$, there would exist a sequence of random variables $B_n$ of the form ${B_n = \sum_j a_j(n) \BB^h_{g_j}}$, where $\|g_j-\tilde{f}\|\geq \|\tilde{f}\|$, such that $B_n$ converges to $\BB^h_{\tilde{f}}$ in $L^2(\PP)$. Stated differently, the sequence $b_n^* = \sum_j a_j(n)\langle \tilde{\mathcal{K}}_h k_h(g_j,\cdot), \cdot \rangle$ converges to $b^* = \langle \tilde{\mathcal{K}}_h k_h(\tilde{f},\cdot), \cdot \rangle$ in $L^2(\mu)$.
For $\varphi \in L^2(\mu)$, define:
\begin{equation*}
\mathcal{F}\varphi(x^*) = \int_E \cos\langle x^*,x \rangle \ \varphi(x) \ {\rm d}\mu(x) \ , \ x^*\in E^* .
\end{equation*}
The main property of $\mathcal{F}$ that we will need is the following: if $\varphi \in E^*$, then $\mathcal{F}\varphi(x^*)\neq 0$ if and only if $\varphi = \lambda.x^*$, for some $\lambda \in \R\setminus \{0\}$. The proof, which is mainly calculus, is postponed to Appendix \ref{App:Fourier}. Let $\mathcal{F}_0$ be a restriction of $\mathcal{F}$ satisfying:
\begin{align*}
\mathcal{F}_0\varphi(f_2) = \int_E \cos\left(\langle \tilde{\mathcal{K}}_h k_h(f_2,\cdot),x \rangle\right) \ \varphi(x) \ {\rm d}\mu(x) \ , \ f_2\in L^2(T), \varphi \in L^2(\mu) .
\end{align*}
For any fixed $f_1\in L^2(T)$, $\mathcal{F}_0 \left(\tilde{\mathcal{K}}_h k_h(f_1,\cdot) \right)$ is non-zero only if $f_2\in L^2(T)$ is such that, for some $\lambda \in \R\setminus \{0\}$, ${\tilde{\mathcal{K}}_h k_h(f_2,\cdot) = \lambda \tilde{\mathcal{K}}_h k_h(f_1,\cdot)}$. Hence it is non-zero only if $k_h(f_2,\cdot) = \lambda k_h(f_1,\cdot)$. A by-product of the proof of Lemma \ref{lem:LinInd} is that this equality can only hold if $f_1 = f_2$. This implies that the support of $\mathcal{F}_0 b_n^*$ is included in $\{g_j \ , \ j\in \N\}$ which is strictly disjoint from the support of $\mathcal{F}_0b^*$. 

Applying the Cauchy-Schwarz inequality to $|\mathcal{F}_0 b_n^*(f) - \mathcal{F}_0 b^*(f) |$, one proves that for all $f\in L^2(T)$, $\mathcal{F}_0 b_n^*(f) \rightarrow \mathcal{F}_0 b^*(f)$ as $n$ tends to infinity. This is a contradiction with the fact that the supports are strictly disjoint.
\end{proof}

\begin{theorem}\label{theo:smallBall}
Let $h\in (0,1/2)$, $\BB^h$ a $h$-fractional Brownian motion and $K$ a compact set in $L^2(T,m)$. Then, for some constant $k_1>0$,
\begin{equation*}
\PP \left(\sup_{f \in K}|\BB^h_f|\leq \varepsilon \right) \leq \exp\left(-k_1 \ N(K,d_h,\varepsilon)\right) \ ,
\end{equation*}
and if there exists $\psi$ such that for any $\varepsilon>0$, $N(K,d_h,\varepsilon)\leq \psi(\varepsilon)$ and ${\psi(\varepsilon)\approx \psi(\varepsilon/2)}$, then for some constant $k_2>0$,
\begin{equation*}
\PP \left(\sup_{f \in K}|\BB^h_f|\leq \varepsilon \right) \geq \exp\left( -k_2 \ \psi(\varepsilon) \right) \ . 
\end{equation*}

\end{theorem}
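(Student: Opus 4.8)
The plan is to establish the two bounds separately, both exploiting the strong local nondeterminism (SLND) property of Lemma \ref{lem:SLND}, which is the crucial new ingredient that makes the classical small-ball machinery of \cite{Talagrand95} go through in the $L^2$-indexed setting. The overall strategy mirrors Talagrand's treatment of the L\'evy fractional Brownian motion: the metric entropy $N(K,d_h,\varepsilon)$ controls the number of ``essentially independent'' values the process takes over $K$, and SLND quantifies exactly how independent the increments are at scale $\varepsilon$. First I would fix a maximal $\varepsilon$-separated set $\{f_1,\dots,f_N\}\subset K$ in the $d_h$ metric, with $N = N(K,d_h,\varepsilon)$ up to a constant, so that $d_h(f_i,f_j)\geq \varepsilon$ for $i\neq j$.

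For the upper bound, the key step is to compare $\PP(\sup_{f\in K}|\BB^h_f|\leq\varepsilon)$ with $\PP(\max_{i\leq N}|\BB^h_{f_i}|\leq\varepsilon)$ and then to show that the $N$ variables $\BB^h_{f_i}$ behave like independent Gaussians at the scale $\varepsilon$. Here SLND enters decisively: Lemma \ref{lem:SLND} gives that conditionally on $\{\BB^h_{f_j}, \|f_i-f_j\|\geq r\}$, the variable $\BB^h_{f_i}$ still has conditional variance bounded below by $C_0 r^{2h}$, which at the separation scale is comparable to $\varepsilon^2$. One then orders the points and estimates, factor by factor, the conditional probability that each $|\BB^h_{f_i}|\leq\varepsilon$ given the previous ones; each such conditional probability is at most some $p<1$ because the conditional standard deviation is of order $\varepsilon$, so the event $\{|\BB^h_{f_i}|\leq\varepsilon\}$ cannot have conditional probability close to $1$. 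Multiplying these $N$ factors yields a bound of the form $p^N = \exp(-k_1 N)$, which is exactly the claimed estimate with $k_1 = \log(1/p)$.

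For the lower bound, I would use the standard approach of replacing the supremum over the compact $K$ by the supremum over the finite net together with an oscillation control, and then invoke an anti-concentration / Gaussian correlation type argument to lower-bound the probability that all $N$ variables are simultaneously small. Concretely, one writes $K$ as a union of $\psi(\varepsilon)$ balls of radius $\varepsilon/2$, controls the fluctuation of $\BB^h$ inside each small ball (using the incremental variance estimate $\EE(\BB^h_f-\BB^h_g)^2 = \|f-g\|^{2h}$ coming from the covariance (\ref{eq:dotprod})), and then multiplies the small-ball probabilities over the net. The hypothesis $\psi(\varepsilon)\approx\psi(\varepsilon/2)$ is precisely what permits iterating this splitting at successive dyadic scales without losing a constant in the exponent, so that the product telescopes into a single exponential $\exp(-k_2\psi(\varepsilon))$.

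The main obstacle I expect is the rigorous conditioning argument in the upper bound: converting the lower bound on conditional variances provided by SLND into a genuine product estimate requires care, because the $\BB^h_{f_i}$ are not jointly independent and the conditioning sets in Lemma \ref{lem:SLND} involve \emph{all} $g$ with $\|f-g\|\geq r$ rather than only the previously revealed net points. I would handle this by noting that conditioning on a smaller $\sigma$-field (just the previous net points) can only increase the conditional variance, so the SLND lower bound continues to apply and the per-step conditional probability stays bounded away from $1$. The lower bound, by contrast, is more routine once the net-plus-oscillation decomposition is set up, with the regular-variation-type hypothesis $\psi(\varepsilon)\approx\psi(\varepsilon/2)$ absorbing the accumulated constants.
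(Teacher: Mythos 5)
Your proposal matches the paper's proof: the upper bound is obtained exactly as you describe, via a maximal separated (packing) set, sequential conditioning in a fixed order, and the SLND property of Lemma \ref{lem:SLND} (together with the fact that conditioning on fewer variables only increases the conditional variance) to bound each conditional probability by $\Phi(C_0^{-1}\eta^{-1}\varepsilon)<1$, with the choice $\eta=\varepsilon/2$ relating the packing number to $N(K,d_h,\varepsilon)$. For the lower bound the paper simply invokes Lemma 2.2 of \cite{Talagrand95}, a general result for Gaussian processes under the hypothesis $N(K,d_h,\cdot)\leq\psi$ with $\psi(\varepsilon)\approx\psi(\varepsilon/2)$, which is essentially the chaining-plus-correlation argument you sketch.
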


\begin{proof}
The lower bound follows from Lemma 2.2 in \cite{Talagrand95} and is a general result for Gaussian processes. The upper bound is specific to Lévy-type fractional Brownian motions and is a consequence of the SLND property proved above, and of an argument of conditional expectations as described in \cite{Monrad}.

Let $\eta>0$ and $M(\eta)\subset K$ be a finite set of maximal cardinality, in the sense that for any elements, $f\neq g \in M(\eta) \Rightarrow \|f-g\|\geq \eta^{1/2h}$. The cardinal $|M(\eta)|$ is generally referred to as packing number. The elements of $M(\eta)$ are arbitrarily ordered and denoted $f_1,\dots, f_{|M(\eta)|}$. Then,
\begin{align*}
\PP \left( \sup_{f\in K} |\BB^h_f|\leq \varepsilon \right) \leq \PP\left( \sup_{f\in M(\eta)} |\BB^h_f|\leq \varepsilon \right) \ ,
\end{align*}
and since the conditional distributions of a Gaussian process are Gaussian, the SLND property of Lemma \ref{lem:SLND} implies that for any $k\in \{2,\dots,|M(\eta)|\}$:
\begin{align*}
\PP\left(|\BB^h_{f_k}|\leq \varepsilon \ \big| \ \BB^h_{f_j} \ , \ j\leq k-1 \right) = \Phi(C_0^{-1}\eta^{-1}\varepsilon) \ ,
\end{align*}
where $\Phi$ is the cumulative distribution function of a standard normal random variable. By repeated conditioning,
\begin{align*}
\PP\left( \sup_{f\in M(\eta)} |\BB^h_f|\leq \varepsilon \right) \leq \left(\Phi(C_0^{-1}\eta^{-1}\varepsilon)\right)^{|M(\eta)|} \ .
\end{align*}

\noindent As $N(2\varepsilon) \leq |M(\varepsilon)|$, taking $\eta = \varepsilon/2$ in the previous inequality yields:
\begin{align*}
\PP \left( \sup_{f\in K} |\BB^h_f|\leq \varepsilon \right) \leq \exp(-k_1 N(\varepsilon)) \ ,
\end{align*}
with $k_1 = -\log\Phi(2C_0^{-1})>0$.
\end{proof}

Estimating the small balls of the fBf (i.e. when $h$ is not fixed anymore) seems more complicated. Talagrand's lower bound estimate still holds, leading to: for $K$ compact in $(0,1/2)\times L^2(T,m)$, $\PP(\sup_{(h,f)\in K} |\BB_{h,f}|\leq \varepsilon) \geq \exp(-k_2\ N(K,d_{\BB},\varepsilon))$. A sharp estimate of this last entropy in terms of the entropy on both coordinates would be required, while for the upper bound, the notion of SLND for the fBf does not seem appropriate: intuitively, the regularity in the $h$-direction contrasts with the nondeterminism studied above. 

\begin{remark}\label{Rem:smallBalls}
Theorem \ref{theo:smallBall} is rather different from what is obtained via \cite{KuelbsLi}. Indeed, their result is concerned with the supremum of the process over the elements of the RKHS measured with the Banach norm:
\begin{align*}
\PP\left(\sup_{\varphi \in B_{E_h^*}(0,1)}|\BB^h_{\varphi}|\leq \varepsilon\right) = \PP\left(\|\BB^h\|_{E_h}\leq \varepsilon\right) = \mu_h\left(B_{E_h}(0,\varepsilon)\right)
\end{align*}
and $\mu_h\left(B_{E_h}(0,\varepsilon)\right) = \mathcal{W}_h\left(B_W(0,\varepsilon)\right)$ by isometry. Meanwhile, it comes from (\ref{eq:smallBallsmultiparam}) that:
\begin{align*}
\mathcal{W}_h\left(B_W(0,\varepsilon)\right) &= \PP\left(\sup_{t\in [0,1]}|\BB^h_t|\leq \varepsilon\right) \approx \exp\left\{-\frac{1}{\varepsilon^{1/h}}\right\} \ ,
\end{align*}
which in general is different from our bound. This does not contradict the previous Theorem, because of the difference between the Hilbert norm and the Banach norm.
\end{remark}

\section{Applications to the regularity of the multiparameter and set-indexed fractional Brownian fields}\label{Sec:applications}

In this section, we present the fBf and the $h$-fBm in the more familiar framework of multiparameter processes, enhancing the fact that these processes are rather different from the Lévy fractional Brownian motion and the fractional Brownian sheet, as well as from their multifractional counterparts (\cite{herbin}). This study is then extended to set-indexed processes. In both cases, the meeting point will be that the fBf is now considered as a multifractional process, meaning that on the indexing collection $\A$ (to be specified), we have a function $\h:\A \rightarrow (0,1/2]$, and denote $\BB^{\h}$ the process indexed over $\A$ defined by $\left\{\BB_{\h(U),U}, U\in \A \right\}$. This framework allows to establish more precise regularity results, such as the measure of local Hölder exponents.

\subsection{Multiparameter multifractional Brownian motion}\label{Sec:mpmBm}

For some $d\in \N^*$, let $\A = \{[0,t], t\in [0,1]^d\}$. Let $\BB$ the fBf on $L^2([0,1]^d,m)$ where $m$ is not necessarily the Lebesgue measure. Then a multiparameter multifractional Brownian motion is a process $\BB^{\h}$ defined for some function $\h:[0,1]^d\rightarrow (0,1/2]$ by:
\begin{align*}
\forall t\in [0,1]^d , \quad \BB^{\h}_t = \BB_{\h_t,\mathbf{1}_{[0,t]}} \ .
\end{align*}
For $\h$ a constant function equal to $1/2$, this is the usual Brownian sheet of $\R^d$ (when $m=\lambda_d$). For any other constant function, this is neither the fractional Brownian sheet nor the Lévy fractional Brownian motion, but a process called multiparameter fBm (mpfBm) with covariance:
\begin{align*}
\EE\left(\BB^{\h}_s \ \BB^{\h}_t\right) = \frac{1}{2} \left(m([0,s])^{2\h} + m([0,t])^{2\h} - m([0,s]\bigtriangleup [0,t])^{2\h}\right) \ ,
\end{align*}
where $\bigtriangleup$ is the symmetric difference between sets. Some of the differences between this process and the aforementioned are discussed in \cite{ehem}. As stated in the introduction, one can also obtain the Lévy fractional Brownian motion from the fBf, choosing another class $\A$ and a specific measure. Hence, the results of regularity for the Lévy fBm (or its multifractional counterpart, see for instance \cite{herbin}) follow from the results of the next subsection rather than this one.

In this case and unlike the previous section, the entropy of $\A$ is perfectly known when $m$ is the Lebesgue measure\footnote{which is assumed until the end of this section.}, and the Dudley integral is easily seen to be finite. Hence the fBf on $\A$ has a continuous modification, and so does any multiparameter mBm. It is possible to establish precise Hölder regularity coefficients. For easier comparison with prior works on mpfBm, we will not use the distance $d_m$ but a variant defined as:
\begin{align*}
\textrm{for } s,t\in [0,1]^d,\quad d'_m(s,t) = m([0,s]\bigtriangleup [0,t]) \ .
\end{align*}
Note that $d'_m(s,t) = d_m(\mathbf{1}_{[0,s]},\mathbf{1}_{[0,t]})^2$. A result of \cite{ehar} states that this distance is equivalent to the Euclidean distance when $m$ is the Lebesgue measure and the set of indexing points stays within a compact away from $0$.

For a stochastic process $X$ indexed on $\A$, let us define the deterministic pointwise H\"older exponent at $t_0\in \A$:
\begin{equation*}
\mathbb{\bbalpha}_X(t_0)=\sup\left\{ \alpha:\;\limsup_{\rho\rightarrow 0}
\sup_{s,t\in B_{d'_m}(t_0,\rho)}
\frac{\EE\left(|X_s-X_t|^2\right)}{\rho^{2\alpha}}<\infty \right\} \ ,
\end{equation*}
where $B_{d'_m}(t_0,\rho)$ is the ball of the $d'_m$ distance. Similarly, the deterministic local H\"older exponent is:
\begin{equation*}
\widetilde{\mathbb{\bbalpha}}_X(t_0)=\sup\left\{ \alpha:\;\limsup_{\rho\rightarrow 0}
\sup_{s,t\in B_{d'_m}(t_0,\rho)}
\frac{\EE\left(|X_s-X_t|^2\right)}{d'_m(s,t)^{2\alpha}}<\infty \right\}.
\end{equation*}
We will compare these exponents to their stochastic analogue, straightforwardly defined getting rid of the expectation in the above definitions. The random coefficients are denoted $\alphar_X(t_0)$ and $\widetilde{\alphar}_X(t_0)$. This extends to continuous (deterministic) functions on $\A$.

As the terminology is commonly accepted in the multifractional literature, a \emph{regular} multiparameter mBm will be a fBf with a function $\h$ such that, at each point, the value of the function is smaller than its local and pointwise exponents (ie $\h_t\leq \alphar_{\h}(t)$).

\begin{proposition}
Let $\BB^{\h}$ be a regular multiparameter mBm on $[0,1]^d$. Then, for all $t_0\in [0,1]^d$, both equalities hold almost surely:
\begin{equation*}
\alphar_{\BB^{\h}}(t_0)= \h_{t_0} \ \textrm{ and } \ \widetilde{\alphar}_{\BB^{\h}}(t_0)= \h_{t_0} \ .
\end{equation*}
\end{proposition}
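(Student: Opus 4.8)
The plan is to reduce the statement to the corresponding \emph{deterministic} H\"older exponents, show that these both equal $\h_{t_0}$, and then transfer the values to the random exponents $\alphar_{\BB^{\h}}(t_0)$ and $\widetilde{\alphar}_{\BB^{\h}}(t_0)$ using the Gaussian nature of the field together with the entropy and local--nondeterminism results of the previous section. All the analysis is local: I would fix a compact neighbourhood $D$ of $t_0$ bounded away from the origin, on which (by \cite{ehar}) the distance $d'_m$ is equivalent to the Euclidean one and on which Corollary \ref{cor:regH} applies.

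First I would split each increment into a pure $h$-part and a pure spatial part,
\begin{equation*}
\BB^{\h}_s - \BB^{\h}_t = \big(\BB_{\h_s,\mathbf{1}_{[0,s]}} - \BB_{\h_t,\mathbf{1}_{[0,s]}}\big) + \big(\BB_{\h_t,\mathbf{1}_{[0,s]}} - \BB_{\h_t,\mathbf{1}_{[0,t]}}\big) \ .
\end{equation*}
The second summand is a spatial increment of the $\h_t$-fBm, whose $L^2(\PP)$-norm is exactly $d'_m(s,t)^{\h_t}$ by the covariance (\ref{eq:dotprod}), while Theorem \ref{th:regH} controls the first summand by $C_{\eta,D}\,|\h_s-\h_t|$. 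Since the field is regular, $\h_t\leq \alphar_{\h}(t)\wedge\widetilde{\alphar}_{\h}(t)$, so $|\h_s-\h_t|$ does not dominate $d'_m(s,t)^{\h_{t_0}}$ as $s,t\to t_0$ (it is of strictly higher order whenever the regularity of $\h$ strictly exceeds $\h_{t_0}$). Combining the upper bound of Corollary \ref{cor:regH} with a reverse triangle inequality applied to the displayed decomposition, I would obtain constants $0<c\leq C$ with
\begin{equation*}
c\, d'_m(s,t)^{2\h_{t_0}} \leq \EE\big((\BB^{\h}_s-\BB^{\h}_t)^2\big) \leq C\, d'_m(s,t)^{2\h_{t_0}}
\end{equation*}
for $s,t$ in a sufficiently small ball around $t_0$; for the local exponent one also uses that $\h_s\wedge\h_t\to\h_{t_0}$ uniformly on $B_{d'_m}(t_0,\rho)$ as $\rho\to0$. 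This pins both deterministic exponents of $\BB^{\h}$ at $t_0$ to $\h_{t_0}$.

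It then remains to pass to the random exponents. The lower bounds $\alphar_{\BB^{\h}}(t_0)\geq \h_{t_0}$ and $\widetilde{\alphar}_{\BB^{\h}}(t_0)\geq \h_{t_0}$ I would deduce from sample--path regularity: the class $\A$ carries polynomial metric entropy under $d'_m$, so Proposition \ref{prop:modulusCo} yields, for every $\varepsilon>0$, an almost sure modulus of continuity of order $d'_m(\cdot,\cdot)^{\h_{t_0}-\varepsilon}$ in a neighbourhood of $t_0$. For the reverse inequalities I would invoke the strong local nondeterminism of the $\h_{t_0}$-fBm (Lemma \ref{lem:SLND}), which the multifractional process inherits locally at $t_0$: conditioning on finer and finer samples gives conditional variances comparable to $C_0\, d'_m(s,t_0)^{2\h_{t_0}}$, and a Borel--Cantelli argument along a sequence $s_n\to t_0$ forces the increments to be no smaller in order than $d'_m(s_n,t_0)^{\h_{t_0}+\varepsilon}$ almost surely, for each $\varepsilon>0$.

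The hard part will be this last transfer, that is, the upper bound on the random exponents. It requires carrying the strong local nondeterminism established for a \emph{fixed} $h$ (Lemma \ref{lem:SLND}) over to the multifractional process, quantitatively controlling the perturbation coming from the variation of $\h$, and converting an $L^2$ lower bound on increments into an almost sure statement holding simultaneously for all $\varepsilon>0$ via a zero--one law. Extra care will be needed in the boundary regime $\h_{t_0}=\alphar_{\h}(t_0)$, where the $h$-oscillation and the spatial increment are of the same order, so that the crude domination used above has to be replaced by a sharper comparison of the two contributions.
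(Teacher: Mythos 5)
Your computation of the deterministic exponents is essentially the paper's own argument: the same splitting of the increment into an $h$-part and a spatial part, the same use of Theorem \ref{th:regH} and Corollary \ref{cor:regH}, and the same use of the regularity hypothesis $\h_t\leq\alphar_{\h}(t)$ to make the spatial term dominate. (One caveat: the two-sided bound $c\,d'_m(s,t)^{2\h_{t_0}}\leq\EE\big((\BB^{\h}_s-\BB^{\h}_t)^2\big)\leq C\,d'_m(s,t)^{2\h_{t_0}}$ with constants uniform over a fixed ball is not literally what comes out, since the spatial term carries the exponent $\h_s\wedge\h_t$ rather than $\h_{t_0}$; but as you note this only costs an arbitrarily small loss in the exponent as $\rho\to0$, which is all the definition of the exponents requires.)

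The genuine gap is in the transfer from deterministic to random exponents. The paper does not reprove this: it invokes the result of \cite{ehar} (originating in \cite{ehjlv}) that for a Gaussian process on an indexing collection satisfying the relevant technical assumptions — verified for the class of rectangles — one has $\PP\big(\alphar_X(t_0)=\mathbb{\bbalpha}_X(t_0)\big)=1$ and likewise for the local exponents, after which the proposition is immediate. You instead attempt to rebuild this equality by hand, and the half you identify as ``the hard part'' (the almost sure upper bound on the random exponents) is exactly the half you do not carry out. Moreover the route you sketch for it is misdirected: strong local nondeterminism (Lemma \ref{lem:SLND}) is neither available for the multifractional process without further work nor needed. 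Since you only need to exhibit, for each $\alpha>\h_{t_0}$, one sequence of pairs along which the normalized increment blows up, it suffices to take $s_n\to t_0$ with $d'_m(s_n,t_0)=2^{-n}$, use the one-dimensional Gaussian anti-concentration bound $\PP(|Z|\leq\lambda)\leq\lambda\sqrt{2/(\pi\sigma^2)}$ together with the variance lower bound already established, and apply the first Borel--Cantelli lemma — no conditioning, no independence, no SLND. As written, your proof is incomplete at its decisive step; either complete it along these elementary lines or, as the paper does, cite the exponent-equality theorem of \cite{ehar} directly.
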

\noindent When $t_0\neq 0$, these equalities still hold true for the exponents defined replacing $d'_m$ with the Euclidean distance. This is another consequence of the equivalence between those distances on a compact away from $0$.

\begin{proof}
The first step is to evaluate $\mathbb{\bbalpha}_{\BB^{\h}}(t_0)$ and $\widetilde{\mathbb{\bbalpha}}_{\BB^{\h}}(t_0)$. A result of \cite{ehar} then states that a Gaussian process $X$, indexed by a collection of sets satisfying certain technical assumptions has the following property:
\begin{equation}\label{eq:HolderExp}
\PP\left( \alphar_{X}(t_0) = \mathbb{\bbalpha}_X(t_0)\right) = 1 \ \textrm{ and } \ \PP\left(\widetilde{\alphar}_{X}(t_0) = \widetilde{\mathbb{\bbalpha}}_X(t_0)\right) = 1 \ .
\end{equation}
As discussed in the aforementioned paper, the technical assumptions are satisfied by the class $\A$ of rectangles. A result of that sort actually originated in \cite{ehjlv}, but we use the one in \cite{ehar} to introduce the extended results of the following section on set-indexed processes.

Let $K$ be a compact of $[0,1]^d$ with $d'_m$-diameter smaller than $1$, whose interior contains $t_0$. As a consequence of the continuity of $\h$, ${\h(K) \subseteq [\eta, 1/2-\eta]}$ for some $\eta>0$. For all $s,t\in B(\rho)$, the ball centred in $t_0$ of radius $\rho$, 
\begin{align*}
\EE\left(\BB^{\h}_t - \BB^{\h}_s\right)^2 &\geq \frac{1}{2} \EE\left(\BB_{\h_t,t} - \BB_{\h_t,s}\right)^2 - \EE\left(\BB_{\h_t,s} - \BB_{\h_s,s}\right)^2 \\
&\geq \frac{1}{2} d'_m(s,t)^{2\h_t} - C_{\eta,K} (\h_t-\h_s)^2  \ ,
\end{align*}
where we used Theorem \ref{th:regH}, and this inequality yields that for any $\alpha > \inf_{B(\rho)} \h$: 
\begin{align*}
\frac{\EE\left(\BB^{\h}_t - \BB^{\h}_s\right)^2}{d'_m(s,t)^{2\alpha}} \geq \frac{1}{2} d'_m(s,t)^{2\inf_B \h - 2\alpha} - \tilde{C}_{\eta,K} d'_m(s,t)^{2\inf_B \alphar_{\h}- 2\alpha} \ .
\end{align*}
The regularity property of $\h$ implies that for $\rho$ sufficiently small, $\alpha$ can be chosen so that $\inf_{B(\rho)} \alphar_{\h} \geq \alpha > \inf_{B(\rho)} \h$, and the previous inequality diverges as $\rho\rightarrow 0$. Hence \[\mathbb{\bbalpha}_{\BB^{\h}}(t_0) \leq \lim_{\rho\rightarrow 0} \inf_{B(\rho)} \h = \h(t_0) \ .\]

The converse inequality follows from the result of Corollary \ref{cor:regH} and the same reasoning. Thus the deterministic exponents are both equal to $\h_{t_0}$, and the property (\ref{eq:HolderExp}) leads to the result.
\end{proof}

This result, which holds for all points, almost surely, is greatly strenghtened into paths properties by the following proposition:

\begin{proposition}
Let $\BB^{\h}$ be a regular multiparameter mBm on $[0,1]^d$. Then, almost surely,
\begin{align*}
\forall t_0\in[0,1]^d, \quad \widetilde{\alphar}_{\BB^{\h}}(t_0) = \h_{t_0} \ \textrm{ and } \ \alphar_{\BB^{\h}}(t_0) \geq \h_{t_0} \ .
\end{align*}
\end{proposition}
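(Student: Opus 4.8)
The key difference with the previous proposition lies in the order of the quantifiers: there, $t_0$ is fixed and the exceptional null set is allowed to depend on it, whereas here a \emph{single} event of full measure must serve every $t_0\in[0,1]^d$ at once, and since $[0,1]^d$ is uncountable the pointwise statement cannot simply be intersected over all $t_0$. I would begin by recording the elementary inequality $\alphar_{\BB^{\h}}(t_0)\geq \widetilde{\alphar}_{\BB^{\h}}(t_0)$, valid for every trajectory: on $B_{d'_m}(t_0,\rho)$ one has $d'_m(s,t)\leq 2\rho$, so for a fixed $\alpha$ the local ratio $|\BB^{\h}_s-\BB^{\h}_t|^2/d'_m(s,t)^{2\alpha}$ is at least a constant times the pointwise ratio $|\BB^{\h}_s-\BB^{\h}_t|^2/\rho^{2\alpha}$, and boundedness of the former forces boundedness of the latter. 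Consequently a uniform lower bound on the local exponent automatically yields $\alphar_{\BB^{\h}}(t_0)\geq \h_{t_0}$, and it suffices to prove the two bounds $\widetilde{\alphar}_{\BB^{\h}}(t_0)\geq \h_{t_0}$ and $\widetilde{\alphar}_{\BB^{\h}}(t_0)\leq \h_{t_0}$, uniformly in $t_0$.

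For the lower bound the plan is to localise the modulus of continuity. Fix $k\in\N^*$ and, using compactness and the continuity of $\h$, cover the index set by finitely many $d'_m$-balls on each of which $\h$ oscillates by at most $1/k$; write $\eta_i$ for the infimum of $\h$ over the $i$-th ball. On that ball, Corollary \ref{cor:regH} bounds $\EE(\BB^{\h}_t-\BB^{\h}_s)^2$ by $C(\h_t-\h_s)^2+2\,d'_m(s,t)^{2(\h_t\wedge\h_s)}$, and the regularity of $\h$ controls the first term by a power of $d'_m(s,t)$ no smaller than $2\eta_i$; hence the intrinsic standard deviation is at most a constant times $d'_m(s,t)^{\eta_i}$. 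Dudley's entropy bound (as used in Proposition \ref{prop:modulusCo}, the $d'_m$-entropy of the rectangles being polynomial for the Lebesgue measure) then provides, on an almost sure event, a Hölder modulus of order $\eta_i$ on that ball, whence $\widetilde{\alphar}_{\BB^{\h}}(t_0)\geq \eta_i\geq \h_{t_0}-1/k$ for every $t_0$ in it. Intersecting the finitely many balls for each $k$ and then over all $k\in\N^*$ keeps a full-measure event on which $\widetilde{\alphar}_{\BB^{\h}}(t_0)\geq \h_{t_0}$ for all $t_0$ simultaneously.

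The substantial step is the matching upper bound $\widetilde{\alphar}_{\BB^{\h}}(t_0)\leq \h_{t_0}$, which asserts that the field is nowhere smoother than predicted and is where the strong local nondeterminism of Lemma \ref{lem:SLND} becomes indispensable. Along the countable set of dyadic points $t_0'$ I would, at each dyadic scale, compare $\BB^{\h}$ near $t_0'$ with the fixed-parameter $\h_{t_0'}$-fBm: by Theorem \ref{th:regH} the discrepancy has variance at most $C(\h_s-\h_{t_0'})^2$ at the relevant points $s$, which the regularity of $\h$ renders negligible against the $\h_{t_0'}$-fBm increments (the same mechanism that produced the deterministic lower bound in the previous proposition). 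The SLND of the $\h_{t_0'}$-fBm makes successive increments across well-separated scales conditionally independent with conditional variance of exact order $d'_m(s,t)^{2\h_{t_0'}}$, so a Borel--Cantelli argument yields, infinitely often as the scale tends to $0$, pairs $s_n,t_n\to t_0'$ with $|\BB^{\h}_{s_n}-\BB^{\h}_{t_n}|\geq c\,d'_m(s_n,t_n)^{\h_{t_0'}}$ for some $c>0$; since the dyadic set is countable this holds on one full-measure event, forcing $\widetilde{\alphar}_{\BB^{\h}}(t_0')\leq \h_{t_0'}$ at every dyadic $t_0'$. An arbitrary $t_0$ is then approximated by dyadics $t_0'\to t_0$: the oscillations witnessed near $t_0'$ sit inside balls about $t_0$ of comparable radius, and $\h_{t_0'}\to\h_{t_0}$ by continuity, so monotonicity of the oscillation in the radius transfers the bound to $t_0$.

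The main obstacle is precisely this uniform upper bound, together with the interchange of quantifiers it encodes: the Borel--Cantelli estimates must be made uniform over the dyadic net while controlling, through Lemma \ref{lem:SLND} and Theorem \ref{th:regH}, the critical case in which the Hölder exponent of $\h$ meets $\h$ itself and the $h$-increment becomes comparable in size to the spatial increment. This asymmetry also explains why only the inequality $\alphar_{\BB^{\h}}(t_0)\geq \h_{t_0}$ is asserted: the pointwise exponent, being governed by the supremum of the oscillations over whole balls, may strictly exceed $\h_{t_0}$ at a random, unavoidable set of slow points, so no uniform upper bound on $\alphar_{\BB^{\h}}$ can hold, whereas the local exponent remains stable and attains equality everywhere.
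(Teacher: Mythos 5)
The paper's own proof is a one-line citation: it invokes Theorem 5.4 of \cite{ehar}, a general result for Gaussian processes on indexing collections satisfying entropy assumptions, which upgrades the \emph{deterministic} exponents $\mathbb{\bbalpha}_{\BB^{\h}}(t_0)=\widetilde{\mathbb{\bbalpha}}_{\BB^{\h}}(t_0)=\h_{t_0}$ (computed in the previous proposition) into the uniform almost-sure statement. You instead try to reprove that machinery from scratch. Your preliminary observation $\alphar_{\BB^{\h}}(t_0)\geq\widetilde{\alphar}_{\BB^{\h}}(t_0)$ is correct, and your lower-bound argument (a finite cover on which $\h$ oscillates by at most $1/k$, Corollary \ref{cor:regH}, and a localized Dudley modulus on each piece) is sound and is essentially the mechanism behind the cited theorem. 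The problem is the uniform upper bound $\widetilde{\alphar}_{\BB^{\h}}(t_0)\leq\h_{t_0}$, which you correctly identify as the substantial step but do not actually close.

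Concretely, your plan is to compare $\BB^{\h}$ near a dyadic $t_0'$ with the fixed-parameter $\h_{t_0'}$-fBm and import the latter's strong local nondeterminism. Theorem \ref{th:regH} bounds the discrepancy variance by $C(\h_s-\h_{t_0'})^2$, and regularity of $\h$ only gives $|\h_s-\h_{t_0'}|\lesssim d'_m(s,t_0')^{\alphar_{\h}(t_0')-\epsilon}$ with $\alphar_{\h}(t_0')\geq\h_{t_0'}$. In the critical case $\alphar_{\h}(t_0')=\h_{t_0'}$, which the definition of a regular mBm permits, the discrepancy at scale $\rho$ is of order $\rho^{\h_{t_0'}-\epsilon}$, i.e. \emph{larger} than the fBm increments $\rho^{\h_{t_0'}}$ you want to detect; the SLND of the comparison process therefore cannot be transferred to $\BB^{\h}$ by this route, and you acknowledge rather than resolve this. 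Moreover, Lemma \ref{lem:SLND} is the wrong tool here: for the \emph{local} exponent no conditional independence is needed. It suffices to apply the first Borel--Cantelli lemma directly to the increments of $\BB^{\h}$ itself, whose variances are bounded below by the computation of the previous proposition (choosing, for each small $\rho$, an $\alpha$ strictly between $\inf_{B(\rho)}\h$ and $\inf_{B(\rho)}\alphar_{\h}$ so that the ratio $\EE(\BB^{\h}_s-\BB^{\h}_t)^2/d'_m(s,t)^{2\alpha}$ diverges), together with the Gaussian lower tail $\PP(|N(0,\sigma^2)|\leq M)\lesssim M/\sigma$; one then still has to make this uniform over $t_0$, which is precisely the content of Theorem 5.4 of \cite{ehar} and is not supplied by your dyadic-approximation sketch. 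As written, the proposal therefore has a genuine gap in the upper-bound half, even though the overall architecture and the lower bound are in order.
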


\begin{proof}
This is a direct application of Theorem 5.4 of \cite{ehar} and of the values of $\mathbb{\bbalpha}_{\BB^{\h}}$ and $\widetilde{\mathbb{\bbalpha}}_{\BB^{\h}}$ computed in the proof of the previous proposition.
\end{proof}

\vspace{0.3cm}

In the case of the SIfBm (\cite{ehar}) or of the regular mBm (\cite{ehjlv}), the previous uniform lower bound of the pointwise exponent is an equality. This provides tangible argument for an improvement of our result, but the question is left open for now. Finally, we mention \cite{HerbinXiao} where a LND property of the mpfBm is exhibited with a different argument than ours, yielding geometric results on the sample paths of the process, as well as a Chung law.

\subsection{Set-Indexed multifractional Brownian motion}

This section is a discussion on a natural extension of the results on multiparameter processes to a wider class of indexing collections. The framework of set-indexed processes of Ivanoff and Merzbach \cite{Ivanoff}, and the results of \cite{ehar} provide a coherent definition of Hölder exponents and fine regularity results.

Let $T$  be a  locally compact complete separable metric and measure space with metric $d$ and Radon measure $m$ defined on the Borel sets of $T$.

\begin{definition}\label{basic}
A nonempty class $\mathcal{A}$ of compact, connected subsets of $T$ is called an {\em indexing collection} if it satisfies the following:
\begin{enumerate}
 \item $\emptyset \in \mathcal{A}$, and the interior $A^{\circ}\neq A$ if $A\neq \emptyset$ or $T$. In addition, there is an increasing sequence $(B_n)_{n\in\N}$ of union of sets of $\A$ such that $T = \cup_{n=1}^{\infty} B_n^{\circ}$.
\item $\mathcal{A}$ is closed under arbitrary intersections and if $A,B\in \mathcal{A}$ are nonempty, then $A\cap B$ is nonempty. 
The $\sigma$-algebra generated by $\mathcal{A}$is equal to $\mathcal{B}$, the collection of Borel sets of $T$.
\item {\em [Separability from above]}, there exists a nested sequence of finite dissecting classes $\A_n$ whose elements approximate sets $A\in\A$ from above (they are bigger for the inclusion), and this approximation is finer as $n\in \N$ increases, until it equals $A$ at the limit. This is fully and precisely described in \cite{Ivanoff}.
\end{enumerate}

\end{definition}

The construction of a set-indexed multifractional Brownian motion relies on what was said at the beginning of Section \ref{Sec:applications}, and it follows that for all $U\in\A, \ \BB^{\h}_U = \BB_{\h_{U},\mathbf{1}_U}$ is a well defined set-indexed process. Its multiple Hölder coefficents are defined identically than in the multiparameter case (think of point $t_0$ as a set $[0,t_0]\in \A$ in the previous paragraph), with respect to the distance $d_m'$, defined for all $U,V\in \A$, by $d_m'(U,V) = m(U\bigtriangleup V)$.

Under entropic assumptions ensuring the convergence of Dudley's integral for $d_m'$ (as described in \cite{ehar}, where the authors consider entropy for the inclusion, reinforcing the \emph{separability from above} condition of $\A$), the results on local and pointwise Hölder exponents, as presented for multiparameter processes, also hold for the SImBm:
\begin{proposition}
Let $\A$ be an indexing collection satisfying \emph{Assumption 1} of \cite{ehar}. Let $\BB^{\h}$ be a regular SImBm on $\A$. Then, for all $U_0\in \A$, both equalities hold almost surely:
\begin{equation*}
\widetilde{\alphar}_{\BB^{\h}}(U_0)= \h_{U_0} \ \textrm{ and } \ \alphar_{\BB^{\h}}(U_0)= \h_{U_0} \ .
\end{equation*}
\end{proposition}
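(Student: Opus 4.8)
The plan is to follow verbatim the two propositions just established for the multiparameter mBm, the only structural change being that the rectangles $[0,t]$ are replaced by general sets $U\in\A$ and the distance is now $d'_m(U,V)=m(U\bigtriangleup V)$. The starting observation is that the covariance of the SImBm is formally identical to the multiparameter one: writing $\BB^{\h}_U=\BB_{\h_U,\mathbf{1}_U}$, for each fixed $h$ one has
\begin{equation*}
\EE\left(\BB_{h,\mathbf{1}_U}-\BB_{h,\mathbf{1}_V}\right)^2 = m\left(|\mathbf{1}_U-\mathbf{1}_V|^2\right)^{2h} = m(U\bigtriangleup V)^{2h} = d'_m(U,V)^{2h} \ ,
\end{equation*}
so all the increment estimates used in the multiparameter argument transfer unchanged once $d'_m$ is read set-theoretically.

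First I would compute the deterministic local and pointwise exponents $\widetilde{\mathbb{\bbalpha}}_{\BB^{\h}}(U_0)$ and $\mathbb{\bbalpha}_{\BB^{\h}}(U_0)$. Fix a compact $K\subset\A$ of $d'_m$-diameter less than $1$ whose interior contains $U_0$; by continuity of $\h$ there is $\eta>0$ with $\h(K)\subseteq[\eta,1/2-\eta]$. For the lower bound, for $U,V$ in a small $d'_m$-ball around $U_0$, the triangle inequality in $L^2(\Omega)$ through the intermediate point $\BB_{\h_U,\mathbf{1}_V}$ gives
\begin{equation*}
\EE\left(\BB^{\h}_U-\BB^{\h}_V\right)^2 \geq \frac{1}{2}\,d'_m(U,V)^{2\h_U} - C_{\eta,K}\,(\h_U-\h_V)^2 \ ,
\end{equation*}
where the spatial term is exact and the $h$-term is controlled by Theorem \ref{th:regH}. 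For the upper bound, Corollary \ref{cor:regH} yields $\EE(\BB^{\h}_U-\BB^{\h}_V)^2\leq C\,(\h_U-\h_V)^2+2\,d'_m(U,V)^{2(\h_U\wedge\h_V)}$. In both estimates the regularity assumption $\h_U\leq\alphar_{\h}(U)$ makes the $(\h_U-\h_V)^2$ contribution of strictly higher order than the main term $d'_m(U,V)^{2\h_{U_0}}$, exactly as in the proofs above. Letting $\rho\to 0$ then pins both deterministic exponents to the value $\h_{U_0}$.

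The final step is to transfer these deterministic equalities to their almost sure stochastic analogues $\widetilde{\alphar}_{\BB^{\h}}(U_0)$ and $\alphar_{\BB^{\h}}(U_0)$. This is precisely identity (\ref{eq:HolderExp}), the general result of \cite{ehar} valid for any indexing collection meeting \emph{Assumption 1} of that reference, which is exactly the hypothesis of the proposition. Since that result is stated at the level of set-indexed processes, invoking it with the computed deterministic exponents concludes the argument.

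The main obstacle is not in the increment computations, which are formal copies of the multiparameter case, but in ensuring that the hypotheses of \cite{ehar} genuinely apply. One must check that \emph{Assumption 1} supplies both the separability and entropy control guaranteeing a continuous separable modification, so that the exponents are well defined and the Dudley integral for $d'_m$ converges in the spirit of Theorem \ref{th:continuity}, and the comparison (\ref{eq:HolderExp}) between deterministic and random exponents on a general indexing collection. Once these are granted, nothing beyond the multiparameter argument is needed.
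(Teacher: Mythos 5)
Your proposal is correct and follows essentially the same route as the paper: the paper likewise treats the SImBm case as a verbatim transfer of the multiparameter argument (lower bound via the triangle inequality and Theorem \ref{th:regH}, upper bound via Corollary \ref{cor:regH}, then the deterministic-to-random exponent identification (\ref{eq:HolderExp}) supplied by \emph{Assumption 1} of \cite{ehar}). Your closing caveat about checking that \emph{Assumption 1} delivers both the entropy/separability control and the exponent comparison is exactly the point the paper itself flags before stating the proposition.
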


\begin{proposition}
Let $\A$ be an indexing collection satisfying \emph{Assumption 1} of \cite{ehar}. Let $\BB^{\h}$ be a regular SImBm on $\A$. Then, almost surely,
\begin{align*}
\forall U_0\in\A, \quad \widetilde{\alphar}_{\BB^{\h}}(U_0) = \h_{U_0} \ \textrm{ and } \ \alphar_{\BB^{\h}}(U_0) \geq \h_{U_0} \ .
\end{align*}
\end{proposition}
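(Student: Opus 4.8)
The plan is to obtain this statement as the uniform, sample-path counterpart of the preceding pointwise proposition, the passage from ``for each $U_0$, almost surely'' to ``almost surely, for all $U_0$'' being supplied by Theorem 5.4 of \cite{ehar}. That theorem is precisely designed to upgrade an identification of the deterministic Hölder exponents into a statement holding simultaneously over the whole indexing collection, on a single event of probability one, for a Gaussian process indexed by a collection $\A$ satisfying \emph{Assumption 1}. The proof therefore reduces to two ingredients: the values of the deterministic exponents of $\BB^{\h}$, and the verification that $\BB^{\h}$ fits the framework of \cite{ehar}.

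For the first ingredient, I would invoke the deterministic computation underlying the previous proposition, which gives $\mathbb{\bbalpha}_{\BB^{\h}}(U_0) = \h_{U_0}$ and $\widetilde{\mathbb{\bbalpha}}_{\BB^{\h}}(U_0) = \h_{U_0}$ at every $U_0\in\A$. Exactly as in the multiparameter case, this rests on a two-sided variance estimate: the upper bound of Corollary \ref{cor:regH} yields $\mathbb{\bbalpha}_{\BB^{\h}}(U_0) \geq \h_{U_0}$ together with the matching local bound, while a lower bound on $\EE(\BB^{\h}_U - \BB^{\h}_V)^2$, obtained by splitting the increment and controlling the $h$-part through Theorem \ref{th:regH}, yields $\mathbb{\bbalpha}_{\BB^{\h}}(U_0) \leq \h_{U_0}$. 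The regularity hypothesis $\h_U \leq \alphar_{\h}(U)$ is what renders the $h$-variation negligible against the spatial variation $d'_m(U,V)^{2\h_U}$, so that the two bounds coincide.

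For the second ingredient, I would check the hypotheses of Theorem 5.4 of \cite{ehar}: \emph{Assumption 1} is assumed, and its accompanying entropic condition guarantees the convergence of the Dudley integral for $d'_m$, hence a continuous modification of $\BB^{\h}$ (cf. Theorem \ref{th:continuity}). With these in place, Theorem 5.4 delivers, almost surely and simultaneously for all $U_0\in\A$, the equality $\widetilde{\alphar}_{\BB^{\h}}(U_0) = \widetilde{\mathbb{\bbalpha}}_{\BB^{\h}}(U_0)$ together with the one-sided bound $\alphar_{\BB^{\h}}(U_0) \geq \mathbb{\bbalpha}_{\BB^{\h}}(U_0)$. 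Substituting the deterministic values computed above gives $\widetilde{\alphar}_{\BB^{\h}}(U_0) = \h_{U_0}$ and $\alphar_{\BB^{\h}}(U_0) \geq \h_{U_0}$ for all $U_0$, which is the assertion.

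The main obstacle is not a delicate estimate but the faithful verification that the multifractional process $\BB^{\h}$, whose local order of self-similarity varies with $U$, genuinely satisfies the structural hypotheses under which Theorem 5.4 of \cite{ehar} is established; this is exactly the role of the regularity assumption on $\h$, which prevents the variation of the Hurst function from corrupting the local behaviour of the increments. I would also stress that, as in the multiparameter case, only a uniform \emph{lower} bound $\alphar_{\BB^{\h}}(U_0) \geq \h_{U_0}$ is reached for the pointwise exponent: matching equality is available pointwise but not, with these tools, simultaneously in $U_0$, and improving this into an equality is left open.
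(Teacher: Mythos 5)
Your proposal is correct and follows essentially the same route as the paper: the deterministic exponents $\mathbb{\bbalpha}_{\BB^{\h}}(U_0) = \widetilde{\mathbb{\bbalpha}}_{\BB^{\h}}(U_0) = \h_{U_0}$ are computed exactly as in the multiparameter case (two-sided variance estimates via Corollary \ref{cor:regH} and Theorem \ref{th:regH}, with the regularity of $\h$ making the $h$-variation negligible), and Theorem 5.4 of \cite{ehar} then upgrades this to the almost-sure uniform statement, yielding equality for the local exponent and only the one-sided bound for the pointwise exponent. Your closing remarks on the role of \emph{Assumption 1} and on the open question of matching the pointwise lower bound also align with the paper's discussion.
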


Finally, we note that there is no evidence of another (Gaussian) process with prescribed regularity in a general set-indexed setting, other than the one we defined. In particular, the (multi-)fractional Brownian sheet and Lévy fBm do not have extensions in the set-indexed setting.

\vspace{0.5cm}

\appendixpage
\begin{appendices}
\section{A bound for the increments of $K_h^*$ in $L^2$}\label{App:borneKh}

This first appendix collects the proofs of the technical results of the beginning of section \ref{sec:hVar}.
We recall that $W^*_+$ denote the set of positive linear functionals over $W$, and that for all $x\in(-1,1)$, $L(x) = \log(|x|^{-1}) \vee 1$ if $x\neq 0$, and $0$ otherwise.

\begin{proof}[Proof of Lemma \ref{lem:posLF}]
For any $t\in \mathbb{D}$, we write $\underline{K}_h(t_n,\cdot) = K_h^{-1}\underline{R}_h(\cdot,t_n)$. $\underline{R}_h$ is the Gram-Schmidt transform of $R_h$, which implies that $\underline{R}_h(\cdot,t_0) = R_h(\cdot,t_0) \textrm{ and } \forall n\geq 1$,
\begin{align}\label{eq:GSRh}
\underline{R}_h(\cdot,t_n) = R_h(\cdot,t_n) - \sum_{j=0}^{n-1}\frac{\left( \underline{R}_h(\cdot,t_j), R_h(\cdot,t_n)\right)_{H_h}}{\|\underline{R}_h(\cdot,t_j) \|^2} \underline{R}_h(\cdot,t_j) \ .
\end{align}
Hence, $\underline{K}_h$ can be written:
\begin{align}\label{eq:K_h-GS}
\underline{K}_h(t_n,\cdot) &= K_h(t_n,\cdot) - \sum_{j=0}^{n-1} \frac{\left( \underline{R}_h(\cdot,t_j), R_h(\cdot,t_n)\right)_{H_h}}{\|\underline{R}_h(\cdot,t_j)\|^2} \underline{K}_h(t_j,\cdot) \nonumber\\
&= K_h(t_n,\cdot) - \sum_{j=0}^{n-1} \frac{\left( \underline{K}_h(t_j,\cdot), K_h(t_n,\cdot)\right)_{L^2}}{\|\underline{K}_h(t_j,\cdot)\|_{L^2}^2} \underline{K}_h(t_j,\cdot) \ ,
\end{align}
and this shows that $\{\underline{K}_h(t_n,\cdot), n\in \N\}$ is the Gram-Schmidt orthogonal family of $L^2$, obtained from $\{K_h(t_n,\cdot), n\in \N\}$. Then for any $g\in L^2$ such that $g \geq 0$, the non-negativeness of $K_h(t,s), \forall t,s\in[0,1]$ (see the closed form (\ref{eq:K_h})), implies that $\int_0^1 g(s) K_h(t_n,s) \ {\rm d}s\geq 0$. Thus, if $g$ is orthogonal to the linear span of $\left\{\underline{K}_h(t_0,\cdot), \dots, \underline{K}_h(t_{n-1},\cdot)\right\}$, it follows from (\ref{eq:K_h-GS}) that $\int_0^1 g \ \underline{K}_h(t_n, \cdot)$ is non-negative. It is obviously also the case if $g\in {\rm Span}\left\{\underline{K}_h(t_0,\cdot), \dots, \underline{K}_h(t_{n-1},\cdot)\right\}$, hence $\left(\underline{K}_h(t_n,\cdot), \ \cdot\ \right)_{L^2}$ is a positive linear functional over ${\rm Span}\left\{\underline{K}_h(t_j,\cdot), j\in\N\right\}$. This leads to the following partial result:
\begin{equation*}
\textrm{for any } j\in \N \ , \quad \underline{R}_h(t_j,t_n) = \left(\underline{K}_h(t_n,\cdot), K_h(t_j,\cdot) \right)_{L^2} \geq 0 \ .
\end{equation*}
Now let $g\in H(R_h)$ such that $g\geq 0$. As any element of $H(R_h)$, $g$ can be approximated by a sequence $\{R_h(\cdot, t_{\varphi_j}), j\in \N \}$. By continuity, $\underline{R}_h(t_{\varphi_j},t_n)$ tends to $(\underline{R}(\cdot,t_n),g)$ as $j$ goes to infinity. Since we have seen that the first term is non-negative for any $j\in \N$, this concludes the proof.
\end{proof}

\noindent Before the proof of Proposition \ref{prop:app1}, we prove a useful technical lemma:
\begin{lemma}\label{lem:app2}
For all $h_1<h_2 \in (0,1/2)$, there exists a constant $\tilde{M}_{h_1}>0$ such that for all $\xi\in W_+^*$,
\begin{equation*}
\int_0^1 \sup_{h\in [h_1,h_2]} \left( K_h^*\xi (u) \right)^2 \ {\rm d}u < \tilde{M}_{h_1} \ \|\xi\|_{H_{h_1}^*}^2 \ .
\end{equation*}
\end{lemma}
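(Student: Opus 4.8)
The plan is to reduce the whole statement to a single pointwise domination between the kernels $K_h$. Since $\xi\in W_+^*$ is a finite nonnegative measure (by equation (\ref{eq:dualite}) and the remark following Lemma \ref{lem:posLF}) and $K_h^*\delta_t = K_h(t,\cdot)$ in $L^2$, I would first write, for a.e.\ $u\in[0,1]$,
\[
K_h^*\xi(u) = \int_{[u,1]} K_h(t,u)\ \xi({\rm d}t) \ \geq 0 ,
\]
the restriction $t\geq u$ coming from the fact that $K_h(t,\cdot)$ vanishes on $(t,1]$. Moreover, by the identity $R_h=K_h\circ K_h^*$ together with (\ref{eq:L2}) extended bilinearly, one has $\|\xi\|_{H_{h_1}^*}^2 = \|R_{h_1}\xi\|_{H_{h_1}}^2 = \|K_{h_1}^*\xi\|_{L^2}^2$, so the inequality to prove is exactly $\int_0^1 \sup_{h\in[h_1,h_2]}\big(K_h^*\xi(u)\big)^2\,{\rm d}u < \tilde{M}_{h_1}\,\|K_{h_1}^*\xi\|_{L^2}^2$.

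The heart of the argument is the pointwise estimate that I would isolate as a claim: there is a constant $C=C(h_1,h_2)$ such that
\[
\sup_{h\in[h_1,h_2]} K_h(t,u) \leq C\ K_{h_1}(t,u), \qquad 0<u<t\leq 1 .
\]
Granting this, the nonnegativity of $\xi$ and of the kernels together with Tonelli's theorem give $\sup_{h}K_h^*\xi(u) \leq \int_{[u,1]}\sup_h K_h(t,u)\,\xi({\rm d}t) \leq C\,K_{h_1}^*\xi(u)$ for a.e.\ $u$ (the supremum being over a compact interval on which $h\mapsto K_h(t,u)$ is continuous, hence equal to a countable supremum and measurable). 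Squaring and integrating then yields the lemma with $\tilde{M}_{h_1}=C^2$.

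It therefore remains to prove the pointwise domination, and this is where the work lies. I would use the closed form of $K_h(t,u)$ for $h<1/2$ (the expression recalled in this appendix; see also \cite{DecreusefondUstunel,Nualart}), which is a sum of a boundary term comparable to $(t/u)^{h-1/2}(t-u)^{h-1/2}$ and a weighted fractional integral term. The comparison is carried out in three regimes. Near the diagonal $u\to t^-$ the boundary term dominates and behaves like $(t-u)^{h-1/2}$; since $t-u<1$ and $h\geq h_1$ one has $(t-u)^{h-1/2}\leq (t-u)^{h_1-1/2}$, so $h_1$ produces the strongest singularity. Near the origin $u\to 0^+$ the fractional integral term dominates and the kernel behaves like $u^{h-1/2}$; again $u<1$ and $h\geq h_1$ give $u^{h-1/2}\leq u^{h_1-1/2}$, so once more $h_1$ is the worst case. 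On any intermediate region bounded away from $\{u=0\}$ and from the diagonal, both $K_h(t,u)$ and $K_{h_1}(t,u)$ are continuous and strictly positive and $h\mapsto K_h(t,u)$ is jointly continuous, so the ratio $K_h/K_{h_1}$ is bounded on the relevant compact set uniformly in $h\in[h_1,h_2]$.

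The main obstacle is to turn these three asymptotic comparisons into a genuinely uniform bound: one must control the normalising constant $c_h$ and the fractional integral term of $K_h$ simultaneously for all $h\in[h_1,h_2]$, and then patch the three regimes with a single constant. I would exploit that $h\mapsto c_h$ and the (normalised) kernel depend continuously—indeed analytically, cf.\ \cite{DecreusefondUstunel}—on $h$ over the compact interval $[h_1,h_2]\subset(0,1/2)$, so that all hidden constants stay finite; the singular exponents in both boundary regimes are monotone in $h$ and maximised at $h=h_1$, which is precisely why the estimate can be stated with a constant governed by $h_1$, the right endpoint $h_2$ entering only through the continuity constants, which remain finite as long as $h_2<1/2$. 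Once the pointwise domination is secured with such a uniform $C$, the conclusion follows from the one-line integration described above.
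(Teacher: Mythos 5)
Your proposal is correct and follows essentially the same route as the paper: both arguments reduce the lemma to a pointwise comparison of the kernels $K_h(t,u)$ and $K_{h_1}(t,u)$ via the closed form (\ref{eq:K_h}), exploiting that $x\mapsto x^{h-1/2}$ is maximised over $h\in[h_1,h_2]$ at $h=h_1$ when $x\le 1$, together with the positivity of the measure $\xi$ and the identity $\|\xi\|_{H_{h_1}^*}=\|K_{h_1}^*\xi\|_{L^2}$. The only cosmetic difference is that you assemble a single global domination $\sup_{h\in[h_1,h_2]}K_h(t,u)\le C\,K_{h_1}(t,u)$ before integrating against $\xi$, whereas the paper splits $K_h^*\xi(u)$ into a near-diagonal piece (where $K_h\le K_{h_1}$ pointwise) and a far piece (where the ratio of kernels is bounded) and concludes with $(a+b)^2\le 2a^2+2b^2$.
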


\begin{proof}
Recall that $K_h^*\xi \in L^2[0,1]$. In \cite[Chap. 5.1.3]{Nualart}, for $h< 1/2$, $K_h$ is given by the following formula: $\forall s,t \in [0,1]$,
\begin{align}\label{eq:K_h}
 K_h(t,s) &= c_h \left( \left(\frac{t(t-s)}{s}\right)^{-(1/2-h)} + (1/2-h)\ s^{1/2-h} \int_s^t u^{h-3/2} (u-s)^{h-1/2} \ {\rm d}u \right) \mathbf{1}_{[0,t)}(s)\\
&= c_h \left(C_h(t,s) + (1/2-h) D_h(t,s) \right) , \nonumber
\end{align}
where $h\mapsto c_h$ is positive and infinitely differentiable. The second term of this sum is uniformly bounded in $s<t\in [0,1]$ and $h\in [h_1, h_2]$, while the first one diverges when $t$ tends to $s$. Hence for $\epsilon$ small enough, $|t-s|<\epsilon$ implies that $K_h(t,s) \leq K_{h_1}(t,s)$ uniformly in $h\in[h_1, h_2]$, since then,
\begin{align*}
\left(\frac{t(t-s)}{s}\right)^{-(1/2-h)} < \left(\frac{t(t-s)}{s}\right)^{-(1/2-h_1)} \ ,
\end{align*}
and the rest of $K_h(t,s)$ is negligeable compared to this last expression. Hence for $u\in [0,1]$,
\begin{align*}
\sup_{h\in [h_1,h_2]} \left( K_h^*\xi (u) \right)^2 &= \sup_{h\in [h_1,h_2]} \left( \int_u^1 K_h(t,u) \ {\rm d}\xi(t) \right)^2 \\
&= \sup_{h\in [h_1,h_2]} \left( \int_u^{u+\epsilon} K_h(t,u) \ {\rm d}\xi(t) + \int_{u+\epsilon}^1 K_h(t,u) \ {\rm d}\xi(t) \right)^2 \\
&\leq 2 \sup_{h\in [h_1,h_2]} \left( \int_u^{u+\epsilon} K_\eta(t,u) \ {\rm d}\xi(t) \right)^2 + 2 \sup_{h\in [h_1,h_2]} \left(\int_{u+\epsilon}^1 K_h(t,u) \ {\rm d}\xi(t) \right)^2 \ .
\end{align*}
According to the remark that on the set $\{(t,u,h): |t-u|\geq\epsilon , h\in[h_1,h_2]\}$, $K_h(t,u)$ is uniformly bounded, there is a positive constant (possibly depending on $\epsilon$) $M$ such that $K_h(t,u)/K_{h_1}(t,u) \leq M$. Because $\xi$ is a finite nonnegative Radon measure, 
\begin{align*}
\int_{u+\epsilon}^1 K_h(t,u) \ {\rm d}\xi(t) &\leq M \int_{u+\epsilon}^1 K_{h_1}(t,u) \ {\rm d}\xi(t) \ .
\end{align*}
It follows that:
\begin{align*}
\sup_{h\in [h_1,h_2]} \left( K_h^*\xi (u) \right)^2 \leq 2 \left( \int_u^{u+\epsilon} K_{h_1}(t,u) \ {\rm d}\xi(t) \right)^2 + 2 M \left(\int_{u+\epsilon}^1 K_{h_1}(t,u) \ {\rm d}\xi(t)\right)^2 \ ,
\end{align*}
and finally:
\begin{align}
\int_0^1 \sup_{h\in [h_1,h_2]} \left( K_h^*\xi (u) \right)^2 \ {\rm d}u &\leq 2 \|\ K_{h_1}^*\xi \ \|_{L^2}^2 + 2M \|\ K_{h_1}^*\xi\ \|_{L^2}^2 \nonumber\\
&\leq \tilde{M}_{h_1} \| \xi \|_{H_{h_1}^*}^2 \ . \nonumber
\end{align}
A direct consequence of this equation is that the $H_{h_1}^*$-norm is bigger than any other $H_h^*$-norm, when $h\geq h_1$.
\end{proof}

\begin{proof}[Proof of Proposition \ref{prop:app1}]
We first recall that for any $h\in(0,1/2]$, ${\xi\in W^*}$, ${\int_0^1 \left(K_h^*\xi (t)\right)^2 \ {\rm d}t < \infty}$, as well as the facts that $\xi$ is considered as a nonnegative measure, and that $c_h, C_h(\cdot,\cdot)$ and $D_h(\cdot,\cdot)$ are nonnegative quantities. For the sake of readability, we shall use the symbol $'$ to denote the $h$-derivation. For all $s,t\in[0,1]$, 
\begin{equation}\label{eq:app2}
K_h'(t,s) = c_h'K_h(t,s) + c_h \ C_h'(t,s) + c_h\ (1/2-h) D_h'(t,s) - c_h \ D_h(t,s), 
\end{equation}
where
\begin{align*}
&C_h'(t,s) = \log\left(s^{-1}t(t-s)\right) C_h(t,s) \ ,\\
&D_h'(t,s) = (\log s) \ D_h(t,s) + s^{1/2-h}\left(\int_s^t (\log u) u^{h-3/2} \log(u-s) (u-s)^{h-1/2} \ {\rm d}u\right) \ \mathbf{1}_{[0,t]}(s) \ .
\end{align*}

\noindent Each part in the sum of (\ref{eq:app2}) will be treated separately, and each but $C_h'$ using the Cauchy-Schwarz inequality. The first part in (\ref{eq:app2}) gives:
\begin{align*}
\int_0^1 \left( \int_{h_1}^{h_2} c_h' K_h^*\xi(s) \ {\rm d}h \right)^2 \ {\rm d}s \leq (h_2-h_1) \int_0^1 \int_{h_1}^{h_2} \left(c_h'\right)^2 \left(K_h^*\xi(s)\right)^2 \ {\rm d}h \ {\rm d}s \ .
\end{align*}
For $\eta>0$, the infinite differentiability and boundedness away from $0$ of $c_h$ implies that there exists a constant $M^1_{\eta}$ such that for all $h\in [\eta, 1/2-\eta]$, 
\begin{equation*}
\int_0^1 \left(c_h'\int_0^1 K_h(t,s) \ {\rm d}\xi(t) \right)^2 \ {\rm d}s \leq M^1_{\eta} \ \|\xi\|_{H_h^*}^2 \ .
\end{equation*}
Now, Lemma \ref{lem:app2} and Fubini's Theorem imply that:
\begin{equation}\label{eq:borne1}
(h_2-h_1) \int_{h_1}^{h_2} \int_0^1 \left(c_h'\int_0^1 K_h(t,s) \ {\rm d}\xi(t) \right)^2 \ {\rm d}s \ {\rm d}h \leq \tilde{M}^1_{\eta} (h_2-h_1)^2 \|\xi\|_{H_{h_1}^*}^2 \ .
\end{equation}
For the rest of this proof, we might as well consider that $c_h$ is uniformly equal to $1$.

Then for $h\in [\eta, 1/2-\eta]$, we look at the second term in the sum of (\ref{eq:app2}).
Let $\alpha_s\in (s,1]$ such that $s^{-1}\alpha_s(\alpha_s-s) = 1$ if $s\leq 1/2$ and $\alpha_s = 1$ otherwise. Since ${t\mapsto s^{-1}t(t-s)}$ is increasing and maps $[s,1]$ to $[0,s^{-1}(1-s)]$, $\alpha_s$ is uniquely defined. Let some $\nu>0$ such that $h\pm\nu\in (0,1/2)$. Let us remark that $u\in [1,\infty) \mapsto \log(u) u^{-\nu}$ is bounded between $0$ and $(e\ \nu)^{-1}$. Similarly, $u\in (0,1] \mapsto \log(u) u^{\nu}$ is bounded between $-(e\ \nu)^{-1}$ and $0$. Thus for $s\in(0,1)$, the map \[t\in (s,1] \mapsto \log\left(s^{-1}t(t-s)\right) C_{-\nu}(t,s) \mathbf{1}_{\{t> \alpha_s\}} + \log\left(s^{-1}t(t-s)\right) C_{\nu}(t,s) \mathbf{1}_{\{t\leq \alpha_s\}} \] is uniformly bounded (in $s$ \emph{and} $t$) by $-(e\ \nu)^{-1}$ and $(e\ \nu)^{-1}$. We note that when $s\geq 1/2$, the first term in the sum is automatically zero. It follows that:
\begin{align}
\left(\int_{h_1}^{h2} \int_s^1 C_{h}'(t,s) \ {\rm d}\xi(t) \ {\rm d}h \right)^2 &= \Bigg(\int_s^{\alpha_s} \left((s^{-1}t(t-s))^{h_2-h_1}-1\right) C_{h_1}(t,s) \ {\rm d}\xi(t) \nonumber \\
& \quad + \int_{h_1}^{h_2} \int_{\alpha_s}^1 \log\left(s^{-1}t(t-s)\right) C_{-\nu}(t,s) C_{h+\nu}(t,s) \ {\rm d}\xi(t) \ {\rm d}h\Bigg)^2 \nonumber\\
&\leq \left((h_2-h_1) L(h_2-h_1)\right)^2 \times \nonumber\\
&\quad \quad \quad \left(\int_s^{\alpha_s} \frac{(s^{-1}t(t-s))^{h_2-h_1}-1}{(h_2-h_1)L(h_2-h_1)} C_{h_1}(t,s) \ {\rm d}\xi(t) \right)^2 \label{eq:app1Ch1}\\
&\quad + 2(e\ \nu)^{-2}(h_2-h_1) \int_{h_1}^{h_2} \left(\int_{\alpha_s}^{1} C_{h+\nu}(t,s) \ {\rm d}\xi(t) \right)^2 {\rm d}h \ . \label{eq:app1Ch2}
\end{align}
(\ref{eq:app1Ch2}) can be treated easily with Lemma \ref{lem:app2}, since it suffices to choose $\nu = \eta/2$ so that $h+\nu \in [\eta, 1/2-\eta/2]$. Thus, for the same reasons as in (\ref{eq:app2}), we have that:
\begin{align*}
\int_{h_1}^{h_2} \int_0^1 2(e\ \nu)^{-2} \left(\int_{\alpha_s}^1 C_{h+\eta/2}(t,s) \ {\rm d}\xi(t)\right)^2 \ {\rm d}s \ {\rm d}h&\leq 8 (e\ \eta)^{-2} \ \tilde{M}_{h_1}^2 (h_2-h_1) \|\xi\|_{H_{h_1}^*}^2 \nonumber\\
&\leq \tilde{M}_{\eta}^2 (h_2-h_1) \|\xi\|_{H_{h_1}^*}^2\ . 
\end{align*}

(\ref{eq:app1Ch1}) requires more care since the same method would involve $C_{h-\nu}$ with $h-\nu$ occasionally smaller than $h_1$. For all $s\in(0,1)$, we define the application: \[\psi_s(t,h) = \frac{(s^{-1}t(t-s))^{h}-1}{h L(h)}\] on the domain $K_s = \{(t,h): \ s\leq t\leq \alpha_s, \ 0<h\leq 1-2\eta\}$. Because ${s^{-1}t(t-s)\in[0,1]}$, it follows that $\psi_s(t,h) \rightarrow 0$ as $h\rightarrow 0$. Hence $\psi_s$ can be continuously extended to \[K_s^o = \{(t,h): \ s\leq t\leq \alpha_s, \ 0\leq h\leq 1-2\eta\}\ , \] which is compact. It follows from the last two remarks that $\psi_s(t,h)$ is bounded by a constant $\sqrt{M_{\eta}^3}$ which is independent of $s,t$, and $h$. Thus, the term in (\ref{eq:app1Ch1}) is smaller than $M_{\eta}^3 \left(C_{h_1}^*\xi(s)\right)^2$. This finally yields, for the second term of (\ref{eq:app2}):
\begin{align}\label{eq:borne2}
\int_0^1 \left(\int_{h_1}^{h2} \int_s^1 C_{h}'(t,s) \ {\rm d}\xi(t) \ {\rm d}h \right)^2 {\rm d}s &\leq 8(e\ \eta)^{-2}\tilde{M}_{\eta}^2   (h_2-h_1)^2 \|\xi\|_{H_{h_1}^*}^2 \nonumber \\
&\quad \quad + M_{\eta}^3 \left((h_2-h_1) L(h_2-h_1)\right)^2 \|\xi\|_{H_{h_1}^*}^2 \ .
\end{align}

The same technique leads to the following bounds for $D_h'$: first,
\begin{align*}
\int_0^1\left(\int_s^1 (\log s) D_h(t,s) \ {\rm d}\xi(t)\right)^2 \ {\rm d}s &= \int_0^1\left(\int_s^1 (\log s) s^{\nu} \ . s^{-\nu} D_h(t,s)  \ {\rm d}\xi(t)\right)^2 \ {\rm d}s \\
&\leq \int_0^1 (\log s)^2 s^{2\nu} \left(\int_s^1 D_{h+\nu}(t,s) \ {\rm d}\xi(t)\right)^2 \ {\rm d}s \\
&\leq (e\ \nu)^{-2} \int_0^1 \left(\int_s^1 D_{h+\nu}(t,s) \ {\rm d}\xi(t)\right)^2 \ {\rm d}s \ .
\end{align*}
Then, 
\begin{align*}
\int_0^1 \Bigg(s^{1/2-h}&\int_s^1 \int_s^t (\log u) u^{h-3/2} \log(u-s) (u-s)^{h-1/2} \ {\rm d}u \ {\rm d}\xi(t) \Bigg)^2 \ {\rm d}s \\
&\leq \int_0^1 \bigg(s^{1/2-(h-\nu)} s^{\nu} \int_s^1 \int_s^t (\log u) u^{\nu} u^{h-\nu-3/2} \times\\
&\quad \quad \quad\log(u-s) (u-s)^{\nu} (u-s)^{h-\nu-1/2} \ {\rm d}u \ {\rm d}\xi(t)\bigg)^2 \ {\rm d}s \\
&\leq (e\ \nu)^{-2} \int_0^1 \left(s^{1/2-(h-\nu)} \int_s^1 \int_s^t u^{h-\nu-3/2} (u-s)^{h-\nu-1/2} \ {\rm d}u \ {\rm d}\xi(t)\right)^2 \ {\rm d}s \\
&= (e\ \nu)^{-2} \int_0^1 \left(\int_s^1 D_{h-\nu}(t,s) \ {\rm d}\xi(t) \right)^2 \ {\rm d}s \ .
\end{align*}
So that, for $\nu = \eta/2$, the estimates on $D_h$ in the proof of Lemma \ref{lem:app2} imply again:
\begin{equation}\label{eq:borne3}
\int_0^1 \left(\int_s^1 D_h'(t,s) \ {\rm d}\xi(t)\right)^2 \ {\rm d}s \leq 16 (e\ \eta)^{-2} \tilde{M}_{\eta}^4 \|\xi\|_{H_{h_1}^*}^2 \ . 
\end{equation}

All three inequalities (\ref{eq:borne1}),(\ref{eq:borne2}) and (\ref{eq:borne3}), put together with a bound on the last term of (\ref{eq:app2}) (which is easily obtained), end the proof.
\end{proof}

\section{Proof of Lemma \ref{lem:LinInd}}\label{App:lemLinInd}

\begin{proof}
The proof is divided into two cases, depending on whether $h=1/2$ or not. The case $h=1/2$ is immediate since $k_{1/2}(f,g) = \int fg\ dm$. Then \[\forall g\in L^2, \ \lambda_1 k_{1/2}(f_1,g) +\dots + \lambda_n k_{1/2}(f_n,g) = 0 \Rightarrow \lambda_1 f_1 +\dots+ \lambda_n f_n =0\] and this yields $\lambda_1 = \dots =\lambda_n = 0$ because $(f_1,\dots,f_n)$ was assumed to be linearly independent.

\vspace{0.2cm}

In the remaining of this proof, $h\in (0,1/2)$. At first we look at the situation when $n=2$, and the proof is led in two steps, depending on whether $m(f_1^2) = m(f_2^2)$ or not.

Assume first that $m(f_1^2) \neq m(f_2^2)$. Using fractional integration as a linear operator over the indicator functions of the form $\mathbf{1}_{[0,t]}$ straightforwardly implies that for $s\neq t \in [0,1]$, $(R_h(\cdot,t),R_h(\cdot,s))$ is linearly independent. This technique extends to $n\geq 2$ and we shall use it later. Our problem in $L^2(T,m)$ reduces to the aforementionned one via the following trick: let $g\in L^2(T,m)$ be non-zero and orthogonal to $f_1$ and $f_2$. Then, for any $\lambda\in \R$:
\begin{align*}
k_h(f_1,\lambda g) &= \frac{1}{2}\left( m(f_1^2)^{2h} + \lambda^{4h} m(g^2)^{2h} - |m(f_1^2) - \lambda^2 m(g^2)|^{2h} \right) \\
&= R_h(t,u_{\lambda}) ,
\end{align*}
where $t = m(f_1^2)$ and $u_{\lambda} = \lambda^2 m(g^2)$. Let $s=m(f_2^2)$ which is different from $t$ by hypothesis, then the linear independence of $\left(\lambda\mapsto R_h(t,u_\lambda), \lambda\mapsto R_h(s,u_\lambda) \right)$ implies the linear independence of $(k_h(f_1,.),k_h(f_2,.))$ in $H(k_h)$.

Assume now we are in the case of $f_1$ and $f_2$ having the same norm ($\neq 0$) and that $k_h(\cdot,f_1)$ and $k_h(\cdot,f_2)$ satisfy: there is $\lambda \in \R$ such that $k_h(\cdot,f_1) = \lambda k_h(\cdot,f_2)$, ie $\forall g\in L^2(T,m)$,
\begin{equation}\label{eq:khlié}
m(f_1^2)^{2h} - \lambda m(f_2^2)^{2h} = (\lambda-1) m(g^2)^{2h} + m(|f_1-g|^2)^{2h} - \lambda m(|f_2-g|^2)^{2h}.
\end{equation}
Applying this equality to $g=f$, $\lambda$ has to be:
\begin{equation*}
\lambda \ k_h(f_1,f_2)= 2 m(f_1^2)^{2h} \ , 
\end{equation*}
and identically with $g=f_2$, one obtains:
\begin{equation*}
\lambda \ m(f_2^2)^{2h} = \frac{1}{2} k_h(f_1,f_2).
\end{equation*}
Thus $\lambda^2 = 1$. If $\lambda = 1$, this is $m(|f_1-g|^2)^{2h} = m(|f_2-g|^2)^{2h}, \forall g\in L^2$, and we deduce that $f_1=f_2$. Let us prove that $\lambda= -1$ is impossible. Let us consider equation (\ref{eq:khlié}) applied to any $g$ which is orthogonal to $f_1$ and $f_2$ and such that $m(g^2) = m(f_1^2)$:
\begin{align*}
4 m(f_1^2)^{2h} &= m(|f_1-g|^2)^{2h} + m(|f_2-g|^2)^{2h} \\
&= \left(m(f_1^2) + m(g^2)\right)^{2h} + \left(m(f_2^2) + m(g^2)\right)^{2h} \\
&= 2^{2h+1} m(f_1^2)^{2h} ,
\end{align*}
which is impossible whenever $h\neq 1/2$.

\vspace{0.2cm}

In a second step, we extend the result for $n\geq 2$: let $f_1,\dots,f_{n+1} \in L^2$ and assume that $k_h(\cdot,f_{n+1})$ is a linear combination of the family $k_h(\cdot,f_1),\dots,k_h(\cdot,f_n)$. The coefficient in this linear combination are denoted $\left(\lambda_n\right)$. Splitting the maps $f_1,\dots,f_n$ into several groups inside which they have the same norm, we index them differently: $f_{1,1},\dots, f_{1,i_1},\dots,f_{l,1},\dots ,f_{l,i_l}$ where for all $j\in \{1,\dots,l\}$, and all $p,q\in \{1,\dots,i_j \}$, $m(f_{j,p}^2) = m(f_{j,q}^2)$. Then, let $g\in L^2$ be orthogonal to $\textrm{span}\{f_1,\dots,f_{n+1}\}$. We already computed that $k_h(f_i,g) = R_h(m(f_i^2), m(g^2))$. The linear combination is expressed, for all $\mu\in\R$, as follows:
\begin{align*}
k_h(f_{n+1},\mu g) = \sum_{j=1}^l \sum_{k=1}^{i_j} \lambda_{j,k} k_h(f_{j,k},\mu. g) ,
\end{align*}
which is better understood in terms of $R_h$:
\begin{align*}
R_h\left(m(f_{n+1}^2), \mu^2 m(g^2)\right) &= \sum_{j=1}^l \sum_{k=1}^{i_j} \lambda_{j,k} R_h\left(m(f_{j,k}^2), \mu^2 m(g^2) \right) \\
&= \sum_{j=1}^l \left(\sum_{k=1}^{i_j} \lambda_{j,k} \right)\ R_h\left(m(f_{j,1}^2), \mu^2 m(g^2) \right) .
\end{align*}
The linear independence for $R_h$ thus commands that $m(f_{n+1}^2)$ be equal to $m(f_{j,1}^2)$ for some ${j\in\{1,\dots,l\}}$. We will assume, without restriction, that $j=1$. It is then necessary that ${\sum_{k=1}^{i_1} \lambda_{1,k} = 1}$ and that for all $j>1$, $\sum_{k=1}^{i_j} \lambda_{j,k} = 0$. In case $i_1<n$, an induction on $n$ ends the proof. Otherwise, the situation is that $m(f_1^2) = \dots = m(f_{n+1}^2)$ and for all $g\in L^2$:
\begin{equation*}
m\left((f_{n+1}-g)^2\right)^{2h} = \sum_{i=1}^n \lambda_i m\left((f_i-g)^2\right)^{2h} .
\end{equation*}
Because $f_{n+1}$ is linearly independent of $f_1,\dots,f_n$, there exists $g$ orthogonal to every $f_i, i\leq n$ but which is not orthogonal to $f_{n+1}$. Then, the previous equation reads: \[\left(m(f_{n+1}^2)+m(g^2) - 2 m(g\ f_{n+1})\right)^{2h} = \left(m(f_{1}^2)+m(g^2)\right)^{2h}, \] which is impossible due to the fact that $m(g\ f_{n+1})\neq 0$.
\end{proof}

\section{A Fourier-type transform in AWS}\label{App:Fourier}

In this section, it is proved that the operator $\mathcal{F}$ defined in the proof of Lemma \ref{lem:SLND}, satisfies the following, for $x^*, \varphi \in E^*$:
\begin{align*}
\mathcal{F}\varphi(x^*) \neq 0 \Leftrightarrow x^* = \lambda \varphi , \textrm{ for some } \lambda\in \R\setminus \{0\} .
\end{align*}

\noindent First assume that $x^*, \varphi \in E^*$ are linearly independent:
\begin{align*}
I = \int_E \cos\langle x^*,x\rangle \ \langle \varphi,x \rangle \ {\rm d}\mu(x) &= \int_{\R^2} \cos(t_1) \ t_2 \ {\rm d}\mu_{\Sigma}(t_1,t_2) \\
&= \frac{1}{2\pi \sqrt{{\rm det} \Sigma}} \int_{\R^2} \cos(t_1) \ t_2 \ \exp\left(-\frac{1}{2} {\bf t}^{T}\Sigma^{-1} {\bf t}\right) \ {\rm d}\lambda(t_1,t_2)
\end{align*}
where $\Sigma$ represents the covariance structure between the Gaussian random variables $x^*$ and $\varphi$ (defined on the probability space $(E,\mathcal{B}(E),\mu)$). Precisely, 
\begin{equation*}
\Sigma = \left( \begin{array}{cc}
\EE^{\mu}\left(\langle x^*,\cdot \rangle^2\right) & \EE^{\mu}\left(\langle x^*,\cdot \rangle \langle \varphi,\cdot \rangle\right)\\
\EE^{\mu}\left(\langle x^*,\cdot \rangle \langle \varphi,\cdot \rangle\right) & \EE^{\mu}\left(\langle \varphi,\cdot \rangle^2\right)
\end{array} \right)
\end{equation*}
By the linear independence hypothesis on $x^*$ and $\varphi$, $\Sigma$ is not degenerated. Up to renormalization, we can consider that the diagonal in $\Sigma$ is $1$. Let $\gamma$ be the non-diagonal term. Then $I$ reads:
\begin{align*}
I &=  \frac{1}{2\pi \sqrt{{\rm det} \Sigma}} \int_{\R^2} \cos(t_1) \ t_2 \ \exp\left(-\frac{1}{2- 2\gamma^2}(t_1^2 + t_2^2 - \gamma t_1 t_2) \right) \ {\rm d}\lambda(t_1,t_2) \\
 &=  \frac{1}{2\pi \sqrt{{\rm det} \Sigma}} \bigg( I_{t_1> 0, t_2> 0} + I_{t_1> 0,t_2< 0} + I_{t_1< 0,t_2> 0} + I_{t_1<0,t_2<0}\bigg) \\
 &= \frac{1}{2\pi \sqrt{{\rm det} \Sigma}} \bigg( I_{t_1> 0,t_2> 0} + I_{t_1> 0,t_2< 0} - I_{t_1> 0,t_2< 0} - I_{t_1>0,t_2>0}\bigg) \\
 &= 0.
\end{align*}

The converse gives, up to a multiplicative constant, $I = \int_{\R} t \exp\left(it - t^2/2\right) \ {\rm d}t >0$, when $\varphi = x^*$. Thus, whenever $x^*$ and $\varphi$ are linearly dependent, $I$ is non-zero.

\end{appendices}


\begin{thebibliography}{10}

\bibitem{adlerTaylor}
R.J. Adler and J.E. Taylor, \emph{{Random fields and geometry}}, Springer
  Verlag, 2007.

\bibitem{AyacheTaqqu}
A.~Ayache and M.~S. Taqqu, \emph{{Multifractional processes with random
  exponent}}, Publ. Mat. \textbf{49} (2005), 459--486.

\bibitem{mBm2}
A.~Benassi, S.~Jaffard, and D.~Roux, \emph{{Elliptic Gaussian random
  processes}}, Rev. Mat. Iberoamericana \textbf{13} (1997), no.~1, 19--90.

\bibitem{Bierme}
H.~Bierm\'{e} and O.~Durieu, \emph{{Invariance principles for self-similar
  set-indexed sums of dependent random fields}}, Preprint (2012), 1--22.

\bibitem{Borell76}
C.~Borell, \emph{{Gaussian Radon measure on locally convex spaces}}, Math.
  Scand. \textbf{38} (1976), 265--284.

\bibitem{Carmona}
R.~Carmona, \emph{{Tensor product of Gaussian measures}}, Conference on Vector space measures and applications, Dublin (1977), 96--124.


\bibitem{Dalang}
R.C.~Dalang, \emph{{Extending martingale measure stochastic integral with applications to spatially homogeneous S.P.D.E's}}, Electron. J. Probab. \textbf{4} (1999), 1--29.


\bibitem{DecreusefondUstunel}
L.~Decreusefond and A.S. \"{U}st\"{u}nel, \emph{{Stochastic Analysis of the
  Fractional Brownian Motion}}, Potential Anal. \textbf{411} (1999), 177--214.

\bibitem{Dobric}
V.~Dobric and F.~M. Ojeda, \emph{{Fractional Brownian fields, duality, and
  martingales}}, IMS Lecture Notes-Monograph series. High dimensional
  probability \textbf{51} (2006), 77--95.

\bibitem{dudley}
R.~M. Dudley, \emph{{Sample Functions of the Gaussian Process}}, Ann. Probab.
  \textbf{1} (1973), no.~1, 66--103.


\bibitem{Fernique75}
X.~Fernique, \emph{Des r{\'e}sultats nouveaux sur les processus gaussiens},
  S{\'e}minaire de Probabilit{\'e}s IX Universit{\'e} de Strasbourg, Springer,
  1975, pp.~318--335.

\bibitem{Gelfand}
I.M. Gel'fand and N.Y. Vilenkin, \emph{{Generalized Functions Vol.4:
  Applications of harmonic analysis}}, Academic Press, 1964.

\bibitem{Gikhman}
I.I. Gikhman and A.V. Skorokhod, \emph{Introduction to the theory of random
  processes}, W.B. Saunders Company, 1969.

\bibitem{Goodman}
V.~Goodman, \emph{{Characteristics of Normal Samples}}, Ann. Probab.
  \textbf{16} (1988), no.~3, 1281--1290.

\bibitem{Gross}
L.~Gross, \emph{{Abstract Wiener spaces}}, Fifth Berkeley symposium on Math.
  Statist. and Prob., 1967, pp.~31--42.

\bibitem{herbin}
E.~Herbin, \emph{{From $N$ Parameter Fractional Brownian Motions to $N$
  Parameter Multifractional Brownian Motions}}, Rocky Mountain J. Math.
  \textbf{36} (2006), no.~4, 1249--1284.

\bibitem{ehjlv}
E.~Herbin and J.~L\'{e}vy-V\'{e}hel, \emph{{Stochastic 2-microlocal analysis}},
  Stochastic Process. Appl. \textbf{119} (2009), no.~7, 2277--2311.

\bibitem{ehem}
E.~Herbin and E.~Merzbach, \emph{{A Set-indexed Fractional Brownian Motion}},
  J. Theoret. Probab. \textbf{19} (2006), no.~2, 337--364.

\bibitem{ehar}
E.~Herbin and A.~Richard, \emph{{Local H\"older regularity of set-indexed
  processes}}, Preprint (2012),
  \href{http://arxiv.org/abs/arXiv:1203.0750v1}{arXiv:1203.0750v1}.

\bibitem{HerbinXiao}
E.~Herbin and Y.~Xiao, \emph{Sample paths properties of the set-indexed
  fractional brownian motion}, In preparation (2014).

\bibitem{Ivanoff}
G.~Ivanoff and E.~Merzbach, \emph{{Set-Indexed Martingales}}, Chapman \&
  Hall/CRC, 2000.

\bibitem{Jolis}
M.~Jolis, \emph{On the Wiener integral with respect to the fractional Brownian motion on an interval}, J. Math. Anal. Appl. \textbf{330} (2007), 1115--1127.

\bibitem{JolisViles}
M.~Jolis and N.~Viles, \emph{{Continuity of the Hurst parameter of the law of the symmetric integral with respect to the fractional Brownian motion}}, Stochastic Proc. Appl. \textbf{120} (2010), 1651--1679.

\bibitem{davar}
D.~Khoshnevisan, \emph{{Multiparameter Processes: an introduction to random
  fields}}, Springer, 2002.

\bibitem{KuelbsLi}
J.~Kuelbs and W.~V. Li, \emph{{Metric Entropy and the Small Ball Problem for
  Gaussian Measures}}, J. Funct. Anal. \textbf{116} (1993), no.~1, 133--157.

\bibitem{Kuo2}
H.H. Kuo, \emph{Gaussian measures in banach spaces}, Berlin:Springer-Verlag,
  1975.

\bibitem{LedouxTalagrand}
M.~Ledoux and M.~Talagrand, \emph{Probability in banach spaces: Isoperimetry
  and processes}, A Series of Modern Surveys in Mathematics Series, Springer,
  1991.

\bibitem{Leonenko2011}
N.~Leonenko, M.~D. Ruiz-Medina, and M.~S. Taqqu, \emph{{Fractional Elliptic,
  Hyperbolic and Parabolic Random Fields}}, Electron. J. Probab. \textbf{16}
  (2011), 1134--1172.

\bibitem{PLevy}
P.~L{\'e}vy, \emph{Processus stochastiques et mouvement brownien},
  Gauthier-Villars, 1965.

\bibitem{MarcusRosen}
M.~B. Marcus and J.~Rosen, \emph{{Markov Processes, Gaussian Processes, and
  Local Times}}, Cambridge University Press, Cambridge, 2006.

\bibitem{Monrad}
D.~Monrad and H.~Rootz\'{e}n, \emph{{Small values of Gaussian processes and
  functional laws of the iterated logarithm}}, Probab. Theory Related Fields
  \textbf{101} (1995), no.~2, 173--192.

\bibitem{Nualart}
D.~Nualart, \emph{{The Malliavin calculus and related topics}}, Springer, 2006.

\bibitem{NualartTindel}
D.~Nualart and S.~Tindel, \emph{Quasilinear stochastic elliptic equations with reflection}, Stochastic Process. Appl. \textbf{57} (1995), 73--82.

\bibitem{mBm1}
R.~Peltier and J.~L\'evy~V\'ehel, \emph{Multifractional brownian motion:
  definition and preliminary results}, INRIA technical report 2645 (1995).

\bibitem{Pitt}
L.~D. Pitt, \emph{{Local times for Gaussian vector fields}}, Indiana Univ.
  Math. J. \textbf{27} (1978), 309--330.

\bibitem{Simon}
M.~Reed and B.~Simon, \emph{{Methods of Modern Mathematical Physics, Vol.I:
  Functional analysis}}, Academic Press, 1980.

\bibitem{Rudin}
W.~Rudin, \emph{Real and complex analysis}, Series in higher mathematics,
  McGraw-Hill, 1987.

\bibitem{Ryan}
R.A.~Ryan, \emph{{Introduction to tensor products of Banach spaces}}, Springer, 2002.

\bibitem{Samorodnitsky}
G.~Samorodnitsky and M.~S. Taqqu, \emph{{Stable Non-Gaussian Random
  Processes}}, 1994.

\bibitem{Bernstein}
R.L. Schilling, R.~Song, and Z.~Vondracek, \emph{Bernstein functions: theory
  and applications}, vol.~37, Walter de Gruyter, 2012.

\bibitem{ShaoWang}
Q.-M. Shao and D.~Wang, \emph{{Small ball probabilities of Gaussian fields}},
  Probab. Theory Related Fields \textbf{102} (1995), no.~4, 511--517.

\bibitem{Stroock}
D.~W. Stroock, \emph{{Probability Theory: An Analytic View}}, 2nd ed.,
  Cambridge University Press, 2010, Available at
  \url{http://math.mit.edu/\~dws/177/prob08.pdf}.

\bibitem{Takenaka}
S.~Takenaka, I.~Kubo, and H.~Urakawa, \emph{{Brownian motion parametrized with
  metric space of constant curvature}}, Nagoya Math. J. \textbf{82} (1981),
  131--140.

\bibitem{Talagrand95}
M.~Talagrand, \emph{{Hausdorff Measure of Trajectories of Multiparameter
  Fractional Brownian Motion}}, Ann. Probab. \textbf{23} (1995), no.~2,
  767--775.

\bibitem{Xiao}
Y.~Xiao, \emph{Sample path properties of anisotropic gaussian random fields},
  {In: }A minicourse on stochastic partial differential equations, Springer,
  2009, pp.~145--212.



\end{thebibliography}

\end{document}